\newcommand{\II}{I\hspace{-0.1cm}I}
\newcommand{\III}{I\hspace{-0.1cm}I\hspace{-0.1cm}I}
\newtheorem{theorem}{\rm\bf Theorem}[section]
\newtheorem{proposition}[theorem]{\rm\bf Proposition}
\newtheorem{conjecture}[theorem]{\rm\bf Conjecture}
\newtheorem{lemma}[theorem]{\rm\bf Lemma}
\newtheorem{definition}[theorem]{\rm\bf Definition}
\newtheorem{question}[theorem]{\rm\bf Question}
\theoremstyle{remark}
\newtheorem{example}[theorem]{\rm\bf Example}
\newtheoremstyle{named}{}{}{\itshape}{}{\bfseries}{.}{.5em}{#1 \thmnote{#3}}
\theoremstyle{named}
\newtheorem*{namedquestion}{Question}
\newcommand{\C}{{\mathbb C}}
\newcommand{\CP}{{\mathbb CP}}
\newcommand{\N}{{\mathbb N}}
\newcommand{\DD}{{\mathbb D}}
\newcommand{\HH}{{\mathbb H}}
\newcommand{\R}{{\mathbb R}}
\newcommand{\RP}{{\mathbb {RP}}}
\newcommand{\HP}{{\mathbb {HP}}}
\newcommand{\Z}{{\mathbb Z}}
\newcommand{\cC}{{\mathcal C}}
\newcommand{\cE}{{\mathcal E}}
\newcommand{\cL}{{\mathcal L}}
\newcommand{\cT}{{\mathcal T}}
\newcommand{\cML}{{\mathcal{ML}}}
\newcommand{\cCP}{{\mathcal{CP}}}
\newcommand{\cQS}{{\mathcal{QS}}}
\newcommand{\dS}{\mathbb{DS}}
\newcommand{\AdS}{\mathbb{ADS}}
\newcommand{\tr}{\mbox{tr}}
\newcounter{notes}%
\def\interieur#1{\mathord{\mathop{\kern 0pt #1}\limits^\circ}}
\title[The Weyl problem for complete surfaces]{On the Weyl problem for complete surfaces in the hyperbolic and anti-de Sitter spaces}
\author{Jean-Marc Schlenker}
\address{Jean-Marc Schlenker:
University of Luxembourg, FSTM, Department of Mathematics, 
Maison du nombre, 6 avenue de la Fonte,
L-4364 Esch-sur-Alzette, Luxembourg}
\email{jean-marc.schlenker@uni.lu}
\thanks{Partially supported by FNR project O20/14766753.}
\date{v2, \today}
\begin{document}

\begin{abstract}
  The classical Weyl problem (solved by Lewy, Alexandrov, Pogorelov, and others) asks whether any metric of curvature $K\geq 0$ on the sphere is induced on the boundary of a unique convex body in $\R^3$. The (positive) answer was extended to surfaces in hyperbolic space by Alexandrov in the 1950s, and a ``dual'' statement, describing convex bodies in terms of the third fundamental form of their boundary (e.g. their dihedral angles, for an ideal polyhedron) was later proved.

  We describe two conjectural generalizations of the Weyl problem in $\HH^3$ and its dual to unbounded convex subsets and convex surfaces, in ways that are relevant to contemporary geometry since a number of recent results and well-known open problems can be considered as special cases. One focus is on unbounded convex domain in $\HH^3$. The boundary data is then the conformal structure on the ``full'' boundary -- including the conformal structure at infinity, but also the conformal structure of the induced metric (resp. third fundamental form) on the boundary in $\HH^3$, together with the induced metric (resp. third fundamental form) in $\HH^3$. 
  
  A second direction is towards complete, locally convex disks of infinite area immersed in $\HH^3$ and surfaces in hyperbolic ends -- with connections to questions on circle packings or grafting.

  Similar statements are proposed in anti-de Sitter geometry, a Lorentzian cousin of hyperbolic geometry where interesting new phenomena can occur, and in Minkowski and Half-pipe geometry. We also collect some partial new results mostly based on recent works by different authors. 

  \bigskip
  \noindent Keywords: Weyl problem, hyperbolic geometry, convex surface, isometric embedding.
\end{abstract}

\maketitle

\tableofcontents
        
\section{The classical Weyl problem in Euclidean and hyperbolic space}

\subsection{Convex surfaces in Euclidean space}
\label{ssc:weyl-e}
Smooth or polyhedral convex surfaces in the 3-dimensional Euclidean space $\R^3$ have been a staple of geometry throughout the ages. One of the first ``modern'' results on this theme was the proof by Legendre \cite{legendre} and Cauchy \cite{cauchy} of the rigidity of convex polyhedra: if two polyhedra have the same combinatorics and corresponding faces are isometric, then they are congruent.

Much later, Alexandrov \cite{alex} realized that Cauchy's rigidity result is actually a consequence of a much deeper statement. The induced metric on the boundary of a convex polyhedron in $\R^3$ is an Euclidean metric with cone singularities with cone angle less than $2\pi$ at the vertices. Alexandrov proved that each metric of this type on the sphere can be realized as the induced metric on the boundary of a unique polyhedron. This result was later extended by Alexandrov \cite{alex} to bounded convex subset of $\R^3$, thus solving the ``Weyl problem'', proposed by H. Weyl in 1915 and for which substantial progress had already been made by H. Lewy \cite{lewy1935priori} and others.

\begin{theorem}[Lewy, Alexandrov] \label{tm:alexE}
  Any smooth metric of positive curvature on the sphere is induced on the boundary of a unique smooth strictly convex subset of $\R^3$. 
\end{theorem}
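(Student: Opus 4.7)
The plan is to use the classical continuity method, together with the Cohn-Vossen rigidity theorem for uniqueness. Let $(S^2, g_1)$ denote the given smooth Riemannian metric of positive Gauss curvature. Since the space of smooth metrics of positive curvature on $S^2$ is path-connected (one can interpolate through $g_t = (1-t)g_0 + t g_1$ after a suitable conformal rescaling, and check positivity of curvature is preserved or use a more careful deformation), I would connect $g_1$ to the round metric $g_0$ of constant curvature $1$, which is obviously induced on the unit sphere in $\R^3$, by a smooth one-parameter family $(g_t)_{t \in [0,1]}$ of smooth metrics of positive curvature. The goal is to show that the set $T \subset [0,1]$ of parameters $t$ for which $g_t$ is realized as the induced metric on a smooth strictly convex surface in $\R^3$ is both open and closed, hence equal to $[0,1]$.

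For \emph{openness}, I would linearize the isometric embedding equation at a convex solution and show that the linearized operator (the isometric deformation operator) is an isomorphism between appropriate H\"older spaces. This reduces to proving infinitesimal rigidity of smooth strictly convex surfaces in $\R^3$, a classical fact going back to Liebmann and Blaschke: an infinitesimal isometric deformation of a closed smooth strictly convex surface is trivial. Combined with the implicit function theorem in $C^{k,\alpha}$, this yields that small perturbations of the metric are still realized, so $T$ is open.

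For \emph{closedness}, the essential ingredient is a priori estimates. Given a sequence $t_n \to t_\infty$ with each $g_{t_n}$ realized by a smooth strictly convex embedding $f_n : S^2 \to \R^3$, one normalizes by translation (say by fixing the origin at the centroid). Nirenberg's classical analysis of the isometric embedding problem provides $C^{2,\alpha}$ estimates on $f_n$ depending only on bounds on $g_t$ and its derivatives and on a positive lower bound for the curvature; the key step is an interior estimate on the second fundamental form using the Darboux-type equation satisfied by the third coordinate function in an adapted frame, combined with the maximum principle. Together with standard bootstrap via the Monge-Amp\`ere-type equation for the support function (or directly for $f$), this yields $C^\infty$ convergence (along a subsequence) to a smooth strictly convex embedding realizing $g_{t_\infty}$.

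For \emph{uniqueness}, I would invoke the Cohn-Vossen rigidity theorem: two smooth closed convex surfaces in $\R^3$ with the same induced metric differ by an isometry of $\R^3$. This is proved via Herglotz's integral identity, which shows that any isometry between such surfaces yields a vanishing global integral whose integrand is non-negative, forcing triviality of the deformation. The hardest step, in my view, is the closedness part: establishing the $C^{2,\alpha}$ a priori bounds and ruling out degeneration where the surface becomes non-strictly convex in the limit. Openness relies on the well-understood infinitesimal rigidity theory, and uniqueness is packaged in Cohn-Vossen, but the global a priori estimates require the full strength of Nirenberg's or Pogorelov's analysis of degenerate elliptic Monge-Amp\`ere equations.
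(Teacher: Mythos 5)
The paper offers no proof of this statement: it is quoted as a classical result, with references to Lewy and Alexandrov, so there is nothing internal to compare against line by line. Your proposal is the standard analytic route (Weyl--Lewy--Nirenberg continuity method for existence, Cohn-Vossen/Herglotz for uniqueness up to rigid motion), and in outline it is correct; note that the historically distinct route, which is the one actually behind Alexandrov's name in the attribution, is polyhedral approximation (realize approximating polyhedral metrics of non-negative curvature, pass to a limit of convex bodies, then invoke Pogorelov's regularity theory to recover smoothness and strict convexity). The continuity method buys a self-contained PDE proof in the smooth category; the polyhedral route buys the much more general statement for Alexandrov metrics of curvature $K\geq 0$, of which the smooth theorem is a special case.

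One step of your sketch deserves a concrete fix: the path-connectedness of the space of positive-curvature metrics is not obtained by the naive convex combination $g_t=(1-t)g_0+tg_1$, whose curvature need not stay positive. The standard device is conformal: by uniformization write $g_1=e^{2u}g_0$ with $g_0$ round of curvature $1$; then $g_t=e^{2tu}g_0$ has curvature $e^{-2tu}(1-t\Delta_0 u)$, and since $1-\Delta_0 u=K_1e^{2u}>0$ one gets $t\Delta_0 u<1$ for all $t\in[0,1]$, so positivity persists along the whole path. With that repaired, the remaining burden is exactly where you place it, namely the a priori $C^{2,\alpha}$ bound on the second fundamental form (Weyl's estimate, completed by Nirenberg and Heinz), which prevents degeneration of strict convexity in the closedness step; and you should record that uniqueness is, as everywhere in this paper, meant up to the isometry group of $\R^3$, which is precisely what Cohn-Vossen's theorem delivers.
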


Here (and elsewhere) by ``strictly convex'' we mean that the shape operator of the boundary is positive definite. By ``smooth'' we mean $C^\infty$, although different degrees of regularity could be considered.

This result was then further extended by Pogorelov \cite{Po} to non-smooth metrics: the induced metric on the boundary of any bounded convex subset of $\R^3$, even if non-smooth, is of curvature $K\geq 0$ in the sense of Alexandrov, and each such metric on the sphere is realized on the boundary of a unique bounded convex body.

\subsection{The Weyl problem and its dual for closed convex surfaces in hyperbolic space}
\label{ssc:weyl-h}

Alexandrov and Pogorelov extended Theorem \ref{tm:alexE} to hyperbolic space.

\begin{theorem}[Alexandrov, Pogorelov] \label{tm:alexH}
  Any smooth metric of curvature $K> -1$ on the sphere is induced on the boundary of a unique convex subset of $\HH^3$ with smooth boundary. 
\end{theorem}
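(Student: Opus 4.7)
The plan is to follow the classical continuity (deformation) method used for Theorem \ref{tm:alexE}, suitably adapted to the hyperbolic setting. Let $\mathcal{M}$ denote the space of $C^\infty$ Riemannian metrics on $S^2$ of curvature $K > -1$, and let $\mathcal{E}$ denote the space of smooth strictly convex embeddings $f : S^2 \to \HH^3$ modulo $\Isom(\HH^3)$. Consider the map $\Phi : \mathcal{E} \to \mathcal{M}$ sending $f$ to its first fundamental form. The strategy is to show that $\Phi$ is a diffeomorphism onto $\mathcal{M}$, by proving it is injective, open, and proper (equivalently, has closed image); since $\mathcal{M}$ is connected and clearly contains metrics in the image (small geodesic spheres, whose intrinsic curvature can be made to range over $(-1,\infty)$), this will suffice.

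Injectivity reduces to a rigidity statement: two smooth strictly convex surfaces in $\HH^3$ with isometric induced metrics are congruent. I would prove this via the hyperbolic analogue of the Herglotz/Blaschke integral formula. Given two such embeddings $f_0,f_1$ with $f_1^*g = f_0^*g$, one interpolates through a path and examines a suitable quadratic form built from the difference of shape operators; the Gauss equation in $\HH^3$, namely $K = -1 + \det B$, together with the strict convexity $\det B > 0$, forces the relevant integrand to vanish, yielding $B_0 = B_1$. The fundamental theorem of surfaces in $\HH^3$ then gives $f_0 = f_1$ up to an ambient isometry. This is the direct hyperbolic counterpart of Cohn-Vossen's rigidity theorem.

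Openness is a linearization/implicit function theorem argument. At $f \in \mathcal{E}$, the differential $d\Phi_f$ acts on normal variations $u \in C^\infty(S^2)$; a standard computation shows it is a linear second-order operator whose symbol is, up to a factor, the shape operator $B$. The strict convexity ($\det B > 0$) makes $d\Phi_f$ elliptic. The infinitesimal version of the rigidity argument above shows that $d\Phi_f$ is injective modulo the finite-dimensional action of $\Isom(\HH^3)$. Combining ellipticity with self-adjointness (the formal adjoint is the same operator up to lower-order curvature terms) and Fredholm theory yields surjectivity, and hence openness.

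The hard part, and the main obstacle, is properness. Given a sequence $g_n \in \mathcal{M}$ converging smoothly to $g_\infty \in \mathcal{M}$ and embeddings $f_n$ with $\Phi(f_n) = g_n$, one must extract a smooth subsequential limit realizing $g_\infty$. This requires three a priori estimates: (i) a uniform diameter and enclosure bound preventing $f_n(S^2)$ from escaping to infinity in $\HH^3$, obtained from Gauss-Bonnet and area bounds coming from the metrics $g_n$; (ii) a uniform upper bound on the principal curvatures, which follows from a $C^2$ interior estimate in the spirit of Pogorelov, after writing the embedding locally as a graph and interpreting the Gauss equation as a Monge-Amp\`ere type equation $\det(\Hess u + \ldots) = (K+1)\cdot (\text{positive factor})$; and (iii) a uniform lower bound on the principal curvatures to ensure strict convexity survives in the limit, which in turn follows from the strict inequality $K > -1$ via the Gauss equation combined with the upper bound from (ii). Once these estimates are secured, Arzel\`a-Ascoli and elliptic regularity give smooth subsequential convergence, and the image of $\Phi$ is closed. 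The Pogorelov-type estimate (ii) is the most delicate step; the extra $-1$ curvature term in the ambient space makes the auxiliary functions used in Pogorelov's maximum principle argument genuinely different from the Euclidean case, but the underlying scheme carries over.
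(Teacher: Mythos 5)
The paper does not prove this statement: Theorem \ref{tm:alexH} is quoted as a classical result of Alexandrov and Pogorelov, with no argument supplied (the only surrounding discussion explains how Theorem \ref{tm:alexE} arises from it by rescaling). So your proposal can only be judged on its own terms. As an outline it is the standard continuity-method proof and the overall architecture (injectivity via rigidity, openness via linearization and Fredholm theory, properness via a priori estimates, connectedness of the space of metrics with $K>-1$) is the right one; the Gauss equation $K=-1+\det B$ is correctly used to pass from an upper bound on principal curvatures to a lower bound, which is the genuinely hyperbolic ingredient.

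One step as written is not correct, however. You say $d\Phi_f$ acts on \emph{normal} variations $u$ as a second-order operator with symbol essentially $B$. Under a purely normal variation $uN$ the first variation of the induced metric is $2u\,\II$, a zeroth-order expression whose pointwise image is one-dimensional; restricted to normal variations, $d\Phi_f$ is neither second order nor anywhere near surjective. To make the openness argument work you must either (a) linearize in the full deformation field $V=V^T+uN$, giving $\dot I=\mathcal{L}_{V^T}I+2u\,\II$, a first-order system whose ellipticity (in the Agmon--Douglis--Nirenberg sense) uses strict convexity and whose kernel modulo trivial motions is computed by the infinitesimal rigidity argument; or (b) parametrize convex surfaces by a scalar (support or graph function), in which case the linearized Gauss equation is second-order elliptic with symbol the cofactor $\det(B)\,B^{-1}$ of the shape operator --- positive definite exactly when $B$ is, but not ``$B$ up to a factor.'' Either route closes the gap; as stated, the openness step would not go through. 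The properness section is a gesture rather than a proof --- the Pogorelov-type $C^2$ estimate and the diameter bound are where essentially all the analytic work of the theorem lives --- but the scheme you describe is the correct one.
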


Note that Theorem \ref{tm:alexE} can be seen as a limit case of Theorem \ref{tm:alexH}. If $h$ is a smooth metric on $S^2$ of curvature $K>0$, one can apply Theorem \ref{tm:alexH} to the sequence of metrics $h_n=(1/n)h$, which are also metrics of curvature $K>0>-1$, yielding a sequence of surfaces $S_n\subset \HH^3$, with the induced metric on $S_n$ isometric to $(1/n)h_n$. The diameter of $S_n$ converges to $0$ as $n\to 0$, so, after applying a sequence of isometries, the sequence $(S_n)_{n\in \N}$ converges to a point $x_0\in \HH^3$. After applying to $\HH^3$ a squence of homotheties (multiplication of the metric by $n$) and extracting a subsequence, $(S_n)_{n\in \N}$ converges to a convex surfaces in $\R^3$, with induced metric $h$.

Analogs of Theorem \ref{tm:alexH} also hold for ideal polyhedra \cite{rivin-comp} and for hyperideal polyhedra \cite{shu}. The existence of those extensions hint at the possibility of a wider extension to more general unbounded convex domains in $\HH^3$ as considered here.

\medskip

In addition, a new phenomenon appears, which was present in the Euclidean situation only in a rather degenerate form.  Recall that if $S$ is a surface in $\HH^3$ with induced metric $I$ and shape operator $B$, its third fundamental form is defined by
$$ \III(x,y)=I(Bx,By)~, $$
for any two vector fields $x,y$ tangent to $S$. The third fundamental form therefore measures how ``curved'' the surface $S$ is. The following statement is from \cite{these}.

\begin{theorem} \label{tm:these}
  Let $\Omega$ be a subset of $\HH^3$ with smooth strictly convex boundary. The third fundamental form of the boundary has $K<1$, and its closed geodesics have length $L>2\pi$. Conversely, any metric of this type on the sphere can be realized as the third fundamental form of the boundary of a unique subset of $\HH^3$ with smooth, strictly convex boundary.
\end{theorem}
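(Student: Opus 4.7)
The approach I would take is to dualize. There is a classical projective duality sending a smooth strictly convex surface $S\subset \HH^3$ to a smooth, locally strictly convex, spacelike surface $S^*$ in the de Sitter space $\dS^3$ (for example, sending each oriented tangent plane of $S$ to its polar point in $\dS^3$). A direct computation shows that, if $S$ has first fundamental form $I$, shape operator $B$, and third fundamental form $\III=I(B\cdot,B\cdot)$, then $S^*$ has first fundamental form equal to $\III$, shape operator equal to $B^{-1}$, and third fundamental form equal to $I$. Once one also dualizes $\dS^3$ back to $\HH^3$, this map is an involution, and the theorem is equivalent to the following statement: a smooth metric $g$ on $S^2$ is the induced metric of a smooth, spacelike, strictly convex surface in $\dS^3$ (unique up to $\Isom(\dS^3)$) if and only if $K_g<1$ and every closed $g$-geodesic has length $>2\pi$.

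Verifying the necessary conditions would come first. The Gauss equation in $\dS^3$ for a spacelike surface with Riemannian shape operator $B^*$ reads $K=1-\det B^*$, so strict convexity gives $K<1$. For the length condition I would compare $S^*$ with a totally geodesic spacelike $2$-sphere in $\dS^3$, which is a round unit sphere all of whose closed geodesics have length exactly $2\pi$: a Rauch-type comparison exploiting $B^*>0$ forces each closed geodesic on $S^*$ to be strictly longer than $2\pi$.

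For the converse I would run a continuity method in $\dS^3$. Let $\mathcal{M}$ be the space of smooth metrics $g$ on $S^2$ with $K_g<1$ and closed geodesics of length $>2\pi$, and let $\cE\subset \mathcal{M}$ be the subset realized as the induced metric of a smooth, spacelike, strictly convex surface in $\dS^3$. The space $\mathcal{M}$ is connected: it contains all round metrics of constant curvature in $(0,1)$ (these arise as duals of round hyperbolic spheres), and any $g\in \mathcal{M}$ can be joined to such a round metric by a convex combination, the relevant conditions being open and convex. Openness of $\cE$ would follow from an elliptic implicit function theorem applied to the induced-metric operator, its linearization being invertible by the infinitesimal rigidity of spacelike strictly convex surfaces in $\dS^3$, which is the de Sitter analogue of the rigidity underlying Theorem~\ref{tm:alexH}. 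Uniqueness then follows by combining this infinitesimal rigidity with the connectedness of the deformation path: any two realizations of the same $g$ would be joined by a smooth family of realizations, all forced to coincide by the local injectivity.

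The hard part will be closedness. I would need to show that if $S_n^*\subset \dS^3$ is a sequence of smooth, spacelike, strictly convex surfaces whose induced metrics $g_n$ converge in $C^\infty$ to some $g_\infty\in \mathcal{M}$, then, after applying isometries of $\dS^3$, the $S_n^*$ subconverge smoothly to a smooth, spacelike, strictly convex surface. The danger is geometric degeneration: a subsequence of the $S_n^*$ could become tangent to a lightlike direction, escape to infinity in $\dS^3$, or develop a singularity in its second fundamental form. The two conditions in the statement are precisely the obstructions to this degeneration. The bound $K_{g_\infty}<1$ keeps $\det B^*$ bounded away from zero, ruling out a collapse of the shape operator; the condition $L_{g_\infty}>2\pi$ prevents the surfaces from rolling up along a lightlike geodesic, since any such degeneration would produce in the limit a closed geodesic of length at most $2\pi$. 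Making this precise requires careful Lorentzian barrier and comparison estimates (the ambient metric in $\dS^3$ being indefinite rules out standard Riemannian compactness arguments), and this is where the bulk of the technical work lies. Once closedness is established, $\cE$ is both open and closed in the connected set $\mathcal{M}$, so $\cE=\mathcal{M}$; dualizing back to $\HH^3$ then yields Theorem~\ref{tm:these}.
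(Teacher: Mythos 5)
The paper itself gives no proof of Theorem~\ref{tm:these}; it quotes the result from \cite{these}, and the strategy there is indeed the one you describe at top level: dualize to a Weyl problem for the induced metric on spacelike strictly convex surfaces in $\dS^3$ and run the continuity method (infinitesimal rigidity for openness, a compactness estimate for closedness, over a connected space of metrics). So your framing is on target, but several steps as written do not work.

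First, your connectedness argument for $\mathcal{M}$ is wrong: Gauss curvature is not a convex function of the metric, so $K_{g_t}<1$ along $g_t=tg+(1-t)g_0$ does not follow from $K_g<1$ and $K_{g_0}<1$; and although lengths behave concavely under convex combination, a closed geodesic of $g_t$ is not a geodesic of $g$ or of $g_0$, so you cannot bound its $g_t$-length below by $2\pi$ this way. Connectedness of $\mathcal{M}$ is true but needs a genuine deformation argument (e.g.\ an explicit path to a constant-curvature metric along which both conditions are checked to persist). Second, your uniqueness argument is circular: to "join two realizations of the same $g$ by a smooth family of realizations" you must lift a path in $\mathcal{M}$ to the space of surfaces, and that lifting property is exactly what properness plus local injectivity (i.e.\ the covering-map structure) provides; having established the covering, you still need one metric with a singleton fiber --- say a constant-curvature metric, realized only by the dual of a distance sphere --- to conclude the covering is trivial. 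Third, the necessity of $L>2\pi$ does not follow from a Rauch comparison: under $K<1$, Rauch controls conjugate points (none before length $\pi$), not the length of closed geodesics, and a metric on $S^2$ with $K<1$ can perfectly well carry closed geodesics of length below $2\pi$. The actual argument is global and uses the embedding: one compares a closed geodesic of the induced metric on $\partial\Omega^*\subset\dS^3$ with the round totally geodesic sphere dual to an interior point of $\Omega$, via a length-controlled projection. Finally, the closedness step --- which you correctly identify as the heart of the matter, namely that $K\le 1-\epsilon$ and $L\ge 2\pi+\epsilon$ preclude lightlike degeneration, escape to infinity, and blow-up of the shape operator --- is only described, not proved; as it stands the proposal contains no argument for the single step on which the whole theorem rests.
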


A version of Theorem \ref{tm:these} was obtained earlier by Hodgson and Rivin \cite{HR} for convex polyhedra, following work of Andreev \cite{Andreev-ideal}. Note that taken together, the condition that $K<1$ and that closed geodesics have length $L>2\pi$ is (almost) equivalent to asking that the metric is (globally) ``strongly'' $CAT(1)$, and it is this strong $CAT(1)$ condition that we will use below for simplicity.

Similar statements exist for the dihedral angles (or more precisely ``dual metrics'', which are essentially polyhedral versions of the third fundamental form) of compact hyperbolic polyhedra \cite{Andreev,HR}, for the dihedral angles of ideal polyhedra \cite{Andreev-ideal,rivin-annals} and of hyperideal polyhedra \cite{bao-bonahon,rousset1,shu}.

\bigskip

In the next two subsection, we introduce two different questions, each stated for the induced metric and, in a dual version, for the third fundamental form. In the next sections we will describe special cases of those problems corresponding either to noted open problems, or to relatively recent results.

There are in fact two separate points of views on the classical Weyl problem, leading to two different kinds of generalizations.
\begin{itemize}
\item It can be considered as a statement on the induced metric on the boundary of convex subsets of $\R^3$ (or $\HH^3$). This point of view leads to Questions $W^{\Omega}_{\HH^3}$ and its duals.
  \item Or, it can be seen as a statement on isometric embeddings in $\R^3$ (or $\HH^3$) of surfaces with Riemannian metrics, under curvature conditions implying that the image surfaces are locally convex. This point of view leads to Question $W^{imm}_{\HH^3}$, which concerns immersed surfaces in $\HH^3$ with prescribed induced metrics (or its dual concerning immersed surfaces with prescribed third fundamental form). 
\end{itemize}

\subsection{Unbounded convex domains in $\HH^3$}
\label{ssc:intro-domain}

In this subsection we consider a convex subset $\Omega\subset \HH^3$. We first introduce a suitable notion of boundary.

\begin{definition} \label{df:boundary}
  Given an (unbounded) convex subset $\Omega\subset \HH^ 3$, we denote by
  \begin{itemize}
  \item $\partial \Omega$, the boundary of $\Omega$ in $\HH^3$, 
  \item $\partial_\infty \Omega$ its ideal boundary, which can be defined for instance using the projective model of $\HH^3$, as the intersection of $\partial_\infty \HH^3$ with the closure of $\Omega$ in the projective model,
  \item $\partial_0\Omega=\partial\Omega\cup\partial_\infty\Omega$, the ``full'' boundary of $\Omega$.
   %% \item $\Lambda_\Omega=\partial_\infty(\partial\Omega)=\partial(\partial_\infty\Omega)$, which we will call the ``limit set'' of $\Omega$.
  \end{itemize}
\end{definition}

There are several examples that can come to mind, and that occur quite naturally in different situations.

\begin{example} \label{ex:quasicircle}
  $\Omega$ could be a convex subset in $\HH^3$ with boundary at infinity a quasi-circle. This case is considered, when $\partial\Omega$ has constant curvature, in \cite{convexhull}. If $M$ is a quasifuchsian manifold, and $C\subset M$ is a compact, geodesically convex subset -- that is, any geodesic segment in $M$ with endpoints in $C$ is contained in $C$ -- then the universal cover $\tilde{C}$ of $C$ is a subset of $\HH^3$ of this type, with $\partial_\infty\tilde{C}$ equal to the limit set of the holonomy representation of $M$, which is a quasicircle in $\partial_\infty \HH^3=\C P^1$.  
\end{example}

\begin{example} \label{ex:quasidisk}
  $\Omega$ could be a convex subset in $\HH^3$ with boundary at infinity $\partial_\infty\Omega$ a quasidisk, and boundary in $\HH^3$ a complete disk in $\HH^3$ with asymptotic boundary $\partial_\infty(\partial \Omega)=\partial(\partial_\infty\Omega)$. This case occurs for instance as the universal cover of a domain $\tilde{C}$ of a geodesically convex domain $C$ in a quasifuchsian manifold $M$, when $\partial C$ is a connected locally convex surface in $M$ and $C$ contains a neighborhood of a connected component of the asymptotic boundary of $M$.
\end{example}

\begin{example} \label{ex:cc}
  Let $M$ be a convex co-compact hyperbolic manifold with incompressible boundary, and let $C\subset M$ be a compact, geodesically convex subset with smooth boundary. Then $\tilde{C}$ is a convex domain in $\HH^3$. Each connected component of $\partial C$ lifts to at least one complete disk in $\HH^3$, but in most cases each connected component of $\partial C$ lifts to infinitely many such disks. In this case, $\partial_\infty\tilde{C}$ is the limit set of the holonomy representation of $M$. If now $C\subset M$ is non-compact but contains a neighborhood of one connected component of the asymptotic boundary of $M$, then $\partial_\infty\tilde{C}$ contains at least one quasidisk (but in most cases infinitely many) which are lifts of this connected component of $\partial_\infty M$. 
\end{example}

\subsection{Conformal structures on $\partial_0\Omega$}

Let $\Omega\subset \HH^3$ be a convex subset. The statements considered below are typically in terms of a conformal (or complex) structure on $\partial_0\Omega$. We understand this notion in the following manner. Note that $\partial_0\Omega$ is naturally equipped with a topology, coming from that of the projective (or Poincar\'e disk) model of $\HH^3$. 

\begin{definition} \label{df:I}
  Let $\Omega\subset \HH^3$ be a convex domain. An {\em induced metric conformal structure} on $\partial_0\Omega$ is a homeomorphism $\phi:\partial_0\Omega \to \C P^1$, which is conformal on $\partial_\infty \Omega$ (equipped with its standard conformal structure as a subset of $\partial_\infty\HH^3$), and conformal on $\partial\Omega$, equipped with its metric induced from that of $\HH^3$.  
\end{definition}

%% \marginnote{Todo: condition for uniqueness -- needs hypothesis on inverse image of $\partial(\partial_\infty\Omega)$. Needs Hausdorff dim? Check. Add in def hypothesis on Hdim in def?}

The existence and uniqueness of an induced metric conformal structure on the ``full'' boundary of a general convex subset in $\HH^3$ is not clear.

\begin{question}
Let $\Omega\subset \HH^3$ be a convex subset. Is there necessarily an induced metric conformal structure on $\partial_0\Omega$? Is it always unique (up to post-composition by a M\"obius homeomorphism of $\CP^1$)? 
\end{question}

Lemma \ref{lm:existence-confI} shows that existence holds in the cases which we will consider as a first step. It is proved in Section  \ref{ssc:conformalI}. To state it, we need a ``technical'' condition.

\begin{definition}
  Let $h$ be a complete, smooth, conformal metric on the disk $\DD$. We will say that $h$ has {\em bounded derivatives} if, when $h$ is written as $h=e^{2u}h_{-1}$, with $h_{-1}$ the complete, conformal hyperbolic metric on $\DD$, there exists for each $k\geq 0$ a constant $C_k>0$ that bounds all $k$-th derivatives of $u$ with respect to $h_{-1}$.
\end{definition}

\begin{lemma} \label{lm:existence-confI}
  \begin{enumerate}
  \item Assume that there exists $k_0>0$ such that $\partial\Omega$ has principal curvatures at most $k_0$. Then $\partial_0\Omega$ admits a unique induced metric conformal structure.
  \item This is in particular the case if there exist $\epsilon'>0$ and $k>0$ such that
    \begin{itemize}
    \item $\partial\Omega$ is a disjoint union of topological disks, 
    \item the induced metric on $\partial\Omega$ has curvature $K\in [-1+\epsilon', 1/\epsilon']$ and bounded derivatives.
    \end{itemize}
  \end{enumerate}
\end{lemma}

Suppose $\phi:\partial_0\Omega\to \CP^1$ is an induced metric conformal structure for $\Omega$. Assume that $\phi(\Lambda_\Omega)$ is {\em conformally rigid}, in the sense that any homeomorphism $\psi:\CP^1\to \CP^1$ which is conformal outside of $\phi(\Lambda_\Omega)$ is M\"obius. Then $\phi$ is unique, up to left composition by a M\"obius transformation. For instance, quasicircles in $\CP^1$ are conformally rigid, see e.g. \cite{bishop:some}.

\medskip

There is a dual notion of conformal structure for the third fundamental form. In case $\Omega$ does not have smooth boundary, its third fundamental form might not be well-defined as a metric on $\partial\Omega$, so one needs to consider instead the dual surface in the de Sitter space $\partial\Omega^*$, see Section \ref{ssc:duality-hyperbolic}.

\begin{definition} \label{df:III}
  Let $\Omega\subset \HH^3$ be a convex domain. A {\em third fundamental form conformal structure} on $\partial_0\Omega$ is a homeomorphism $\phi:\partial_0\Omega^* \to \C P^1$, which is conformal on $\partial_\infty \Omega=\partial_\infty\Omega^*$ (equipped with its standard conformal structure as a subset of $\partial_\infty\HH^3$), and conformal on $\partial\Omega^*$, equipped with its metric induced from that of $dS^3$.  
\end{definition}

We state and prove in Section \ref{ssc:conformalIII} the analog of Lemma \ref{lm:existence-confI} for the third fundamental form, as well as a uniqueness statement.

\subsection{Boundary data on convex subset of $\HH^3$}

In this section we state a question on the induced metric on the boundary of a (possibly unbounded) convex subset in $\HH^3$, as a well as a dual question concerning the third fundamental form. We provide partial answers in special cases.
The main questions are stated here in a rather general manner, so that it's not clear whether a positive answer can be given in that generality, or whether stronger hypothesis are needed. 

\begin{namedquestion}[$W^{\Omega}_{\HH^3}$] \label{q:Omega}
  Let $U\subset \C P^1$ be an open subset, and let $h$ be a complete metric on $U$, of curvature $K>-1$. 
  Is there a unique convex domain $\Omega\subset \HH^3$ with an induced metric conformal structure $\phi:\partial_0\Omega\to \C P^1$ such that $\phi(\partial \Omega)=U$ and that $\phi^*h=I$ on $\partial\Omega$?
\end{namedquestion}

For instance, when $U=\CP^1$, the answer is positive and corresponds to the solution of the classical hyperbolic Weyl problem. 

\medskip

We can state a very similar dual question.

\begin{namedquestion}[$W^{* \Omega}_{\HH^3}$]
  Let $U\subset \C P^1$ be an open subset, and let $h^*$ be a complete metric on $U$, of curvature $K<1$, with closed geodesics of length $L>2\pi$. 
  Is there a unique convex domain $\Omega\subset \HH^3$ with an third fundamental form conformal structure $\phi:\partial_0\Omega^*\to \C P^1$ such that $\phi(\partial \Omega^*)=U$ and that $\phi^*h^*=I^*$ on $\partial\Omega$?
\end{namedquestion}

Here and in the other statements below, the uniqueness is up to the action of the group of isometries of the ambient space, here $\HH^3$. 

We will see in Section \ref{sc:Omega} a number of relations between questions $W^{\Omega}_{\HH^3}$ and $W^{\Omega *}_{\HH^3}$ on the one hand, and open questions or known results concerning quasifuchsian or convex co-compact hyperbolic manifolds. We will also mention recent results which can be considered as answers to the existence part of those questions, concerning the geometric data on the convex core of quasifuchsian (or more generally convex co-compact) hyperbolic manifolds.

%% add: short list of examples that are treated in later sections.

\subsection{Complete locally convex surfaces in $\HH^3$}
\label{ssc:complete}

So far, we have considered Theorem \ref{tm:alexH} and its dual Theorem \ref{tm:these} as statements on the induced metric or third fundamental forms on the boundary of convex subsets in $\HH^3$. There is, however, another way to consider them, namely, as statements on isometric immersions of locally convex surfaces (resp. on embeddings inducing a given third fundamental form). For closed surfaces in $\HH^3$, the two points of view are equivalent, since any closed locally convex immersed surface bounds a convex domain. For unbounded surfaces, however, the two points of view are quite different, since a locally convex immersion of a surface, even if proper, does not need to be an embedding, and its image might not bound a convex subset. (A simple example can be obtained by deforming the universal cover of the set of points equidistant from a geodesic in $\HH^3$.)

Given a complete, locally convex immersed disk $S\subset \HH^3$, there is in general no meaningful way to see it as a part of the boundary of a convex subset. However, local convexity means that the hyperbolic Gauss map --- the map sending a point $x\in S$ to the endpoint at infinity of the geodesic starting from $x$ in the direction of the normal vector pointing towards the concave side of $S$ --- is a local homeomorphism. 

One can therefore consider on $S$ the pull-back of the conformal structure at infinity $c_\infty$ by this hyperbolic Gauss map $G:S\to \CP^1$, and the gluing map at infinity between the induced metric (resp. third fundamental form) on $S$ and this pull-back conformal structure. Specifically, given a proper locally convex embedding $\phi:\DD\to \HH^3$, let $U_0:\DD\to \DD$ the Riemann uniformization map for the pull-back $(G\circ\phi)^*c_\infty$ by the Gauss map of the conformal metric at infinity, and let $U_1:\DD\to \DD$ be the Riemann uniformization map for the conformal class of the induced metric $I$ on $S$. We call $\partial (U_1^{-1}\circ U_0):\partial \DD\to \partial \DD$ the {\em induced metric gluing map} for $\phi$. We can of course define in a similar manner the {\em third fundamental gluing map}. Both are well-defined up to pre- and post-composition by M\"obius transformations.

This definition of a gluing map at infinity can be extended to a wider setting, where it will be used below. Let $g_0, g_1$ be two conformal Riemannian metrics on a surface $S$ homeomorphic to the disk, both conformal to the disk $\DD$ (i.e. of hyperbolic type). Let $U_i:\DD \to (S, g_i)$, $i=0,1$, be the Riemann uniformization map. The {\em gluing map at infinity} between $g_0$ and $g_1$ is the homeomorphism $\partial (U_1^{-1}\circ U_0):\partial \DD\to \partial \DD$. It is well-defined up to pre- and post-composition by M\"obius transformations. The same definition can be used in $g_0$ or $g_1$ is replaced by a conformal structure (also conformal to the disk) on $S$.

\begin{namedquestion}[$W^{imm}_{\HH^3}$]
  Let $h$ be a complete metric of curvature $K\in [-1,-\epsilon]$ on the disk $\DD$, for some $\epsilon>0$, and let $\sigma:\partial \DD\to \partial \DD$ be a quasi-symmetric homeomorphism. Is there a unique locally convex isometric immersion $\phi:(S,h)\to \HH^3$ such that the induced metric gluing map of $\phi$ is $\sigma$ (up to M\"obius transformations)?
\end{namedquestion}

\begin{namedquestion}[$W^{* imm}_{\HH^3}$]
  Let $h$ be a complete metric of curvature $K\in [-1/\epsilon,-\epsilon]$ on the disk $\DD$, for some $\epsilon>0$, with closed geodesics of length $L>2\pi$, and let $\sigma:\partial \DD\to \partial\DD$ be a quasi-symmetric homeomorphism. Is there a unique immersion $\phi:(S,h)\to \HH^3$ such that the pull-back by $\phi$ of the third fundamental form is $h$, and that the third fundamental form gluing map of $\phi$ is $\sigma$ (up to M\"obius transformations)? 
\end{namedquestion}

Note that the flavor of those questions is quite different from those of Question $W^{\Omega}_{\HH^3}$ and its dual, since the conformal metric at infinity that is considered here is on the {\em concave} side of the surface, rather than on the convex side as for Question $W^{\Omega}_{\HH^3}$ and its dual.

There is a similarity between those two questions and the ``hyperbolic Plateau problems'' considered for $K$-surfaces in $\HH^3$, see e.g. \cite{labourie:morse,labourie-random,smith:hyperbolic,smith:pointed}.

We will see in Section \ref{sc:immersions} that answers to special cases of Questions $W^{imm}_{\HH^3}$ and $W^{* imm}_{\HH^3}$ are directly related to recent results on the grafting map, while a beautiful conjecture on circle packings can be seen as another special case of Question $W^{*imm}_{\HH^3}$.

There are also other statements, similar to Question $W^{imm}_{\HH^3}$, that can be proved for $K$-surfaces but might extend to surfaces of variable curvature, see Section \ref{ssc:similar}. One concerns the gluing map at infinity between the induced metric on a surface and its third fundamental form, while the other is about the gluing map at infinity between the induced metric (or third fundamental form) of a surface and the conformal class of $(1+K)I+\III$.

\subsection{Anti-de Sitter geometry}

The 3-dimensional anti-de Sitter space $\AdS^3$ is the model 3-dimensional Lorentzian space of constant curvature $-1$. It can be considered as the Lorentzian cousin of $\HH^3$. We refer to Section \ref{ssc:ads-basics}, or to \cite{mess,mess-notes,bonsante-seppi:anti} for the basic properties of AdS geometry, and of space-like surfaces in $\AdS^3$. The boundary at infinity $\partial_\infty\AdS^3$ is identified with $\RP^1\times \RP^1$, and equipped with a conformal Lorentz structure.

It is quite natural to consider convex subsets of $\AdS^3$ with boundary at infinity an acausal curve in $\partial \AdS^3$. We are specifically interested in convex subsets with space-like boundary having a boundary ``at infinity'' which is a ``quasicircle'', which arise for instance as the universal covers of globally hyperbolic compact maximal AdS spacetimes. (See Section \ref{ssc:ads-basics} for the definition of a quasicircle in $\partial \AdS^3$.) Given such a convex subset  $\Omega$, its boundary in $\AdS^3$ is the disjoint union of two disks $D_-$ and $D_+$, each equipped with an induced metric which we require to be complete. Moreover there is a natural identification between the conformal boundaries of $D_-$ and $D_+$ -- each equipped with its induced metric -- given by the gluing at infinity of the two, we call this map the {\em gluing map at infinity} between $D_-$ and $D_+$, each equipped with its induced metric.

The Gauss equation in $\AdS^3$ indicates that the induced metrics on $D_-$ and $D_+$ have curvature $K\leq -1$. As in the hyperbolic case, the induced metric on its boundary is not sufficient to recover a convex subset with boundary at infinity a quasicircle, but we expect that this induced metric together with a ``gluing'' data at infinity uniquely characterizes it.

%% When they have curvature bounded from below and away from $-1$, that is, $K\in [-1/\epsilon, -1-\epsilon]$ for some $\epsilon>1$, the gluing map at infinity between $D_-$ and $D_+$ is quasisymmetric, see \ref{}\marginnote{Seems false, need quasicircle at infinity. Necessary?}.

\begin{namedquestion}[$W^{\Omega}_{\AdS^3}$]
  Let $g_-, g_+$ be two complete Riemannian metrics on the disk $\DD$, of curvature $K\in [-1/\epsilon, -1-\epsilon]$ for some $\epsilon>0$. Let $\sigma:\partial (\DD, g_-) \to \partial (\DD, g_+)$ be a quasisymmetric homeomorphism. Is there a unique convex subset $\Omega\subset \HH^3$, with boundary at infinity  $\partial_\infty \Omega$ a quasicircle, such that the induced metric on the past (resp. future) boundary component of $\partial\Omega$ is isometric to $g_-$ (resp. $g_+$), with induced metric gluing map equal to $\sigma$? 
\end{namedquestion}

Note that the boundaries $\partial (\DD, g_\pm)$ appearing here are the conformal boundaries of $(\DD, g_-)$ and $(\DD, g_+)$. Each is equipped with a real projective structure through Riemann uniformization, so that the notion of quasi-symmetric homeomorphism makes sense.

One could of course state a dual question, concerning the third fundamental form on the past and future boundary components of a convex subset, and the corresponding gluing map. This dual question, however, is equivalent to Question $W^{thin}_{\AdS^3}$ through a duality between convex subsets of $\AdS^3$, see Section \ref{sssc:duality}.

Recently, A. Mesbah \cite{mesbah2024} has given a positive answer to the existence part of this question under slightly stronger regularity asumptions, see Section \ref{sssc:existence}. (He assumes that the two metrics have all derivatives uniformly bounded with respect to the hyperbolic metric in the same conformal class.) Moreover, an existence statement holds for the measured bending lamination on the boundary of the convex hull of a quasicircle in $\partial \AdS^3$. 

\medskip

We will recall in Section \ref{ssc:ads-basics} how the left (or right) metric can be defined for any space-like surface of negative curvature in $\AdS^3$. The case of globally hyperbolic AdS spacetimes (see below), and in particular the Mess analog of the Bers Simultaneous Uniformization for those spacetimes \cite{mess}, suggest that the left and right metrics play the role of the conformal metrics at infinity in hyperbolic geometry. We can therefore state an AdS analog of Question $W^{imm}_{\HH^3}$.

Given a space-like, complete surface $S\subset \AdS^3$, one can consider on $S$ the induced metric $I$, as well as the left metric $m_L$ (using the definition of the left metric in \cite{minsurf}). Again, we can consider the Riemann uniformization map $U_0:\DD\to S$ for the left metric $m_L$, and the Riemann uniformization map $U_1:\DD\to S$ for the induced metric. The boundary map $\partial(U_1\circ U_0^{-1})$ is then the {\em induced metric gluing map} of $S$.

\begin{namedquestion}[$W^{imm}_{\AdS^3}$]
  Let $h$ be a Riemannian metric on $\DD$ of bounded, uniformly negative curvature, and let $\sigma:\partial\DD\to \partial \DD$ be a quasisymmetric homeomorphism. Is there a unique locally convex properly embedded surface $S\subset \AdS^3$ such that the induced metric on $S$ is isometric to $h$, while its induced metric gluing map is $\sigma$?  
\end{namedquestion}

As above, the curvature conditions here could conceivably be relaxed. Another remark is that there is a dual statement, but it is equivalent to Question $W^{imm}_{\AdS^3}$ through the duality described in Section \ref{sssc:duality}. The same question can also of course be asked for the right metric. 

We will see in Section \ref{sc:ads} some cases where a positive answer can be given to Question $W^{imm}_{\AdS^3}$.
\begin{itemize}
\item For pleated surfaces, that is, when the metric $h$ has constant curvature $-1$, Question $W^{imm}_{\AdS^3}$ has a positive answer, equivalent to Thurston's Earthquake Theorem \cite{thurston:earthquakes,FLP}.
\item For $K$-surfaces (when the metric $h$ has constant curvature $K<-1$) the answer is also positive, and follows from a recent result of Bonsante and Seppi \cite{bonsante-seppi:area}. For $K$-surfaces invariant under a cocompact surface group, the positive answer follows from earlier work on ``landslides'' \cite{cyclic,cyclic2}.
\end{itemize}

There is in the physics literature a notion of globally hyperbolic maximal compact (GHMC) AdS spacetime. Mess discovered some remarkable analogies between quasifuchsian hyperbolic manifolds and those GHMC AdS spacetimes \cite{mess,mess-notes} which for this reason are now sometimes called ``quasifuchsian AdS spacetimes''. In this analogy, the role of the conformal metrics at infinity of a quasifuchsian hyperbolic manifold is played, for quasifuchsian AdS spacetimes, by the ``left'' and ``right'' hyperbolic metrics. Considering closed surfaces in quasifuchsian AdS spacetimes is equivalent to considering surfaces invariant under a co-compact surface group action in Question $W^{\Omega}_{\AdS^3}$. In that case, existence results are known. The results can then be stated in terms of the induced metrics or third fundamental forms on the boundary of convex domains in quasifuchsian AdS spacetimes. One can mention in particular:
\begin{itemize}
\item The result of Diallo (see \cite{diallo2013} or \cite[Appendix]{convexhull}): any pair of hyperbolic metrics on a surface of genus at least $2$ can be realized as the induced metric on the boundary of the convex core of a GHMC AdS spacetime.
\item Its extension by Tamburelli \cite{tamburelli2016} to prescribing metrics of curvature less than $-1$ on the boundary of convex domains in GHMC AdS spacetimes.
\item A dual result \cite{earthquakes} on prescribing the measured bending laminations on the boundary of the convex core of GHMC AdS spacetimes.
\end{itemize}

The analog of Question $W^{imm}_{\AdS^3}$ in this globally hyperbolic compact setting also has a positive answer, see \cite{immersed_weyl_ads}.

\begin{theorem}[Chen-S. 2024]
  Let $S$ be a closed surface of genus at least $2$, let $h$ be a complete Riemannian metric of curvature $K<-1$ on $S$, and let $h_0\in\cT_S$ be a hyperbolic metric on $S$. There is a unique quasifuchsian AdS spacetime $M$ with left metric isotopic to $h_0$ and containing a past-convex spacelike surface with induced metric isotopic to $h$.
\end{theorem}

More details can be found in Section \ref{sc:ads}.

\subsection{Minkowski geometry}

There is another setting where an analog of the Weyl problem can be considered for unbounded surfaces: the Minkowski space $\R^{2,1}$. There is a well-understood way to associate a surface in $\R^{2,1}$ to a first-order deformation of a hyperbolic plane in $\HH^3$ -- in fact one can associate to such a first-order deformation a surface in half-pipe geometry, and then by duality a surface in $\R^{2,1}$, see Section \ref{sc:minkowski}.

Through this correspondence, the following question turns out to be an infinitesimal version of Question $W^{*\Omega}_{\HH^3}$ near the simplest instance of that statement, namely, when $\Omega$ is a totally geodesic plane in $\HH^3$. We will see in Section \ref{sc:minkowski} how Question $W^{\Omega}_{\R^{2,1}}$ is related to Question $W^{\Omega}_{\HH^3}$ and to Question $W^{\Omega}_{\AdS^3}$, see Section \ref{ssc:deformation}. 

\begin{namedquestion}[$W^{\Omega}_{\R^{2,1}}$]
  Let $g_+, g_-$ be two complete metrics of curvature $K\in [-1/\epsilon,-\epsilon]$ on $\DD$, for some $\epsilon>0$, and let $\sigma:\partial \Omega_-\to \partial \Omega_+$. Is there a unique pair of matching convex domains $(\Omega_-, \Omega_+)$ such that the induced metric on $\partial \Omega_\pm$ is isometric to $(\DD, g_\pm)$ and that the identification map between the boundaries of $(\DD,g_-)$ and $(\DD,g_+)$ is $\sigma$? 
\end{namedquestion}

Note that this question is equivalent to Question $W^{*\Omega}_{\HP^3}$ appearing in Section \ref{ssc:quasifuchsian-HP}.

The statement will be explained in more details in Section \ref{sc:minkowski}. When a complete, space-like disk is embedded in a domain of dependence, each ideal point of the disk corresponds to a light-like line in the boundary of the domain. This isotropic line is also contained in the boundary of the matching domain of dependence, and this defines a natural identification between $(\DD,g_-)$ and $(\DD, g_+)$.

The equivariant version of this question -- for corresponding pairs of globally hyperbolic maximal compact Minkowski manifolds -- was recently answered positivey by Graham Smith \cite{smith2020weyl}, while a related polyhedral result was obtained by Fillastre and Prosanov \cite{fillastre-prosanov}.

 %% The classical Weyl problem in R^3 and H^3

\section{Background and underlying results}

\subsection{Notations}

We start by fixing some notations used throughout the paper.
\begin{itemize}
\item $\cQS$ is the space of quasi-symmetric, orientation-preserving homeomorphisms from $\RP^1$ to $\RP^1$.
\item $\cT$ is the universal Teichm\"uller space, the quotient of $\cQS$ by the group of M\"obius transformations.
\item $\cML$ is the space of bounded measured laminations on the hyperbolic disk.
\item $S$ denotes a closed, oriented surface of genus at least $2$.
\item $\cT_S$ is the Teichm\"uller space of $S$, that is, the space of complex structures on $S$ considered up to isotopy.
\item $\cML_S$ is the space of measured laminations on $S$.
\item $\cCP_S$ is the space of complex projective structures on $S$, considered up to isotopy.
\end{itemize}

\subsection{The duality between hyperbolic and de Sitter space}
\label{ssc:duality-hyperbolic}

The ``duality'' between the questions concerning the induced metrics and those concerning the third fundamental form is related to the polar duality between the hyperbolic and de Sitter space, which we briefly recall here. A similar polar duality also occurs in anti-de Sitter geometry, see Section \ref{sssc:duality}. We briefly recall this duality here for completeness, referring the reader to e.g. \cite{HR,shu,fillastre-seppi:spherical}.

The de Sitter space can be defined as a quadric in the 4-dimensional Minkowski space:
$$ \dS^3 = \{ x\in \R^{4,1}~|~ \langle x,x\rangle =1 \}~, $$
equipped with the induced metric. It is simply connected, geodesically complete Lorentzian space of constant curvature $1$.

The geodesics (resp. totally geodesic planes) in $\dS^3$ are its intersections with the 2-dimensional planes (resp. hyperplanes) of $\R^{3,1}$ containing $0$. The space-like planes are isometric to the standard sphere, while the time-like planes are topologically cylinders.

The de Sitter space has a projective model, similar to the Klein model of hyperbolic space. In fact only one ``hemisphere'' of $\dS^3$ -- the future of a totally geodesic space-like plane -- is represented as the exterior of the unit ball in $\R^3$, while the interior of this ball contains a projective model of $\HH^3$. This model can also be considered in $\RP^3$, while a projective model of the whole of $\dS^3$ can be obtained in the sphere $S^3$, considered as the double cover of $\RP^3$, as the complement of the union of two balls. 

The ``polar'' duality is defined between points in $\HH^3$ and totally geodesic space-like planes in $\dS^3$, or between points in $\dS^3$ and oriented totally geodesic planes in $\HH^3$. Given a point $x\in \HH^3$, we denote by $x^\perp$ the orthogonal hyperplane in $\R^{3,1}$, which is space-like since $\langle x,x\rangle=-1$ by definition of $\HH^3$. As a consequence, the intersection $x^\perp \cap \dS^3$ is a totally geodesic space-like plane, which we denote by $x^*$. Similarly, given a point $y\in \dS^3$, its orthogonal in $\R^{3,1}$ is an oriented hyperplane, which is time-like since $\langle y,y\rangle=1$. The dual $y^*$ of $y$ is then the intersection of $y^\perp$ with $\HH^3$, considered as a totally geodesic plane. The same definitions can be used to define the dual of a space-like plane in $\dS^3$, which is a point in $\HH^3$, and the dual of an oriented plane in $\HH^3$, which is a point in $\dS^3$.

Consider now a convex subset $\Omega\subset \HH^3$. Its polar dual is the subset $\Omega^*\subset \dS^3$ defined as the set of points dual to the oriented planes in $\HH^3$ bounding a half-plane disjoint from $\Omega$. It is a geodesically convex subset of $\dS^3$. The boundary of $\Omega^*$ is the dual of $\partial \Omega$, in the sense that it is the set of points dual to the oriented support planes of $\Omega$. If $\partial \Omega$ is smooth and strongly convex (meaning that its second fundamental form is positive definite) then $\partial \Omega^*$ is also smooth and strictly convex. 

This definition of polar duality extends to unbounded convex subsets. One can check using the definitions that the dual of a convex subset of $\HH^3$ with ideal boundary a quasicircle $C\subset \partial_\infty \HH^3$ is a convex subset of $\dS^3$ with ideal boundary the same curve $C$. (Note that the ideal boundary of $\HH^3$ is naturally identified with one connected component of the ideal boundary of $\dS^3$, for instance through the projective models described above.) However, the dual of a convex subset with ideal boundary a topological disks $D\subset \partial_\infty \HH^3$ is a convex subset in $\dS^3$ whose ideal boundary is the {\em complement} of $D$.

A key feature of this polar duality (see e.g. \cite{shu,fillastre-seppi:spherical}) is that if $\Sigma \subset \HH^3$ is a strongly convex surface in $\HH^3$, then the dual surface $\Sigma^*\subset \dS^3$ is also strongly convex, with induced metric corresponding to the third fundamental form of $\Sigma$, and third fundamental form corresponding to the induced metric of $\Sigma$. As a consequence, Question $W^{*\Omega}_{\HH^3}$ can be considered as a direct analog of Question $W^{\Omega}_{\HH^3}$ but in the de Sitter space rather than in $\HH^3$.

%% Moreover, the behavior of the duality on subsets with ``thick'' ideal boundary suggests that Question $W^{*thick}_{\HH^3}$ can be considered as (close to) an analog of Question $W^{imm}_{\HH^3}$ but in the de Sitter space, and conversely.

%%% dual of dom

\subsection{The third fundamental form and the bending lamination of pleated surfaces}
\label{ssc:bending}

Another point that is worth clarifying is the relation between the third fundamental form of smooth surfaces and the measured bending lamination of pleated surfaces, since we claim in the introduction that the questions concerning the bending lamination forms are limit cases of questions concerning the third fundamental form.

It is helpful here to use the polar duality between $\HH^3$ and $\dS^3$. Let $\Sigma$ be a locally convex pleated surface in $\HH^3$, with measured bending lamination $l$. Then the dual of $\Sigma$ is a real tree $\Sigma^*$ in $\dS^3$, with one vertex corresponding to each totally geodesic region in $\Sigma$, see \cite{belraouti1,belraouti2}. Moreover the distance between two vertices in this real tree is equal to the transverse measure for the bending lamination of a transverse segment connecting a point of one of the totally geodesic region to a point of the other.

Now if $\Sigma_r$ is the equidistant surface at constant distance $r>0$ from $\Sigma$ on the concave side of $\Sigma$, then $\Sigma_r^*$ converges as $r\to 0$ to a convex surfaces in $\dS^3$ which contains $\Sigma^*$ and is light-like outside of $\Sigma^*$. Moreover, $\Sigma_r$ is a $C^{1,1}$ surface, and its third fundamental form is well-defined. It is then proved in \cite{belraouti1,belraouti2} that the distance for the third fundamental form on $\Sigma_r$ converges as $r\to 0$ to the pseudo-distance determined by the transverse measure of $l$. More specifically, if $x,y\subset \Sigma$ are contained in totally geodesic open subsets, if $x_r, y_r\subset \Sigma_r$ and $x_r\to x, y_r\to y$ as $r\to 0$, then
$$ d_{\III_r}(x_r, y_r)\to d_l(x,y)~, $$
where $d_{\III_r}$ is the distance associated to the third fundamental form on $\Sigma_r$, while $d_l$ is the pseudo-distance associated to the measured lamination $l$ on $\Sigma$. We refer to \cite{belraouti1,belraouti2} for details. 

%%% relation entre III et bending measured lamination on pleated surface ?
%%% give precise relation in terms of limit.
%%% define  measured bending lamination as length of dual tree

 %% Background and technical precisions

\section{Conformal structures on the boundaries of convex subsets}

\subsection{Induced metric conformal structures}
\label{ssc:conformalI}

We provide here a proof of Lemma \ref{lm:existence-confI}. In the next section we state and prove a similar lemma for the third fundamental forms.

\begin{proof}[Proof of Lemma \ref{lm:existence-confI}]
  (1) Assume that $\partial\Omega$ has principal curvatures at most $k_0$. We consider the hyperbolic Gauss map $G:\partial \Omega\to \partial_\infty\HH^3$, sending a point $x\in \partial\Omega$ to the endpoint at infinity of the geodesic ray starting from $x$ orthogonally to $\partial\Omega$. The convexity of $\Omega$ indicates that $G$ is a homeomorphism from $\partial_0\Omega$ to $\partial_\infty\HH^3$. The bound on the principal curvatures implies that $G$ is $K$-quasiconformal, with $K=\sqrt{k_0^2+2k_0}/(1+k_0)$.

  Let $\mu$ be Beltrami coefficient of the inverse map $G$, considered on $\partial_\infty\HH^3\simeq \CP^1$. The measurable Riemann mapping theorem \cite{ahlfors-bers:riemann} implies that there is quasiconformal map $H:\CP^1\to \CP^1$ with Beltrami coefficient  $-\mu$, and it is unique up to composition on the left by a M\"obius transformation. The composition $H\to G:\partial_0\Omega\to \CP^1$ is then conformal, which proves the existence of a induced metric conformal map for $\Omega$. Its uniqueness (up to post-composition by M\"obius transformations) follows from the uniqueness of $H$.

  (2) The result will follow from \cite[Lemma 4.1]{weylgen}, applied to each connected component of $\partial\Omega$, but we need to prove that the induced metrics on those connected components have injectivity radius bounded from below by a positive constant.

  Let $\partial_i\Omega$ be a connected component of $\partial\Omega$, and let $\gamma$ be a closed geodesic on $(\partial_i\Omega, I)$ of length $l$ -- we will show that $l$ cannot be smaller than a fixed, strictly positive constant.

  Since $\partial_i\Omega$ is a topological disk, $\gamma$ bounds a topological disk $D_\gamma$. Since $\gamma$ is geodesic for $I$, the integral curvature of $D_\gamma$ is equal to $2\pi$. The area of $D_\gamma$ is therefore at least $2\pi\epsilon'$.

  However the hypothesis that $I$ has bounded derivatives implies (for $k=0$) that $I=e^{2u}h_{-1}$, where $h_{-1}$ is the complete hyperbolic metric conformal to $I$ on $\partial_i\Omega$, with $-C_0\leq u\leq C_0$. The area of $D_\gamma$ for $h_{-1}$ is therefore at least $2\pi \epsilon'e^{-2C_0}$. This in turns leads, through the hyperbolic isoperimetric inequality, to a lower bound on the length of $\gamma$ for $h_{-1}$, and then, using this time the upper bound $I\leq e^{2C_0}h_{-1}$, to a lower bound on the length of $\gamma$ for $I$.

  The lower bound on the injectivity radius of $(\partial_i\Omega,I)$ follows, and point (2) then follows from \cite[Lemma 4.1]{weylgen}.
\end{proof}

It can be noted that, if the induced metric is assumed to have non-positive sectional curvature, then the lower bound on the injectivity radius is trivial. However in the presence of positive curvature, there might be complete metrics with curvature bounded from above and from below but arbitrarily small injectivity radius.

\subsection{Third fundamental form conformal structures}
\label{ssc:conformalIII}

There is a similar lemma but for the conformal structures associated to the third fundamental form on $\partial \Omega$, or rather for the induced metric on the dual convex subset, as seen in Definition \ref{df:III}. Specifically, the following lemma is a direct analog of Lemma \ref{lm:existence-confI}.

\begin{lemma} \label{lm:existence-confIII}
  \begin{enumerate}
  \item Assume that there exists $k_0>0$ such that $\partial\Omega$ has principal curvatures at most $k_0$. Then $\partial_0\Omega$ admits a unique third fundamental form conformal structure.
  \item This is in particular the case if there exist $\epsilon'>0$ and $k>0$ such that 
    \begin{itemize}
    \item $\partial\Omega$ is a disjoint union of topological disks,
    \item the third fundamental form on $\partial\Omega$ has curvature $K^*\in [-1/\epsilon', 1-\epsilon']$ and bounded derivatives,
      \item closed geodesics of $(\partial \Omega, \III)$ have length at least $2\pi+\epsilon'$.
    \end{itemize}
  \end{enumerate}
\end{lemma}

\begin{proof}[Sketch of the proof]
  Point (1) can be proved exactly as point (1) of Lemma \ref{lm:existence-confIII}. Point (2) however requires a bit of adaptation. The proof essentially follows the proof of  \cite[Lemma 4.1]{weylgen}, however the use of \cite[Th\'eor\`eme D]{L1} is replaced by that of \cite[Th\'eor\`eme 5.6]{these}, which is an analog in the de Sitter space -- the hypothesis on the lengths of closed geodesics in $(\partial \Omega, \III)$ then prevents the appearance of a singularity along a geodesic in the limit, since closed geodesics on the de Sitter space have length $2\pi$. 
\end{proof}

\subsection{Mixed conformal structures}

Nothing prevents one from considering ``mixed'' conformal structures, taking the conformal class of the induced metrics on some boundary components of $\partial \Omega$, and the third fundamental form on others. Such boundary data would occur for instance when considering a convex subset $\Omega$ which is the universal cover of a convex subset $N$ in a convex co-compact hyperbolic manifolds $M$, when one attempts to prescribe the induced metric on some boundary components of $N$ and the third fundamental form on others. This type of mixed problems, for compact subsets of convex co-compact hyperbolic manifolds,  is considered in \cite{chen2022geometric,mesbah:induced}.  %% Conformal structures

\section{Convex domains in hyperbolic space}
\label{sc:Omega}

We develop in this section the questions and results already mentioned more briefly in Section \ref{ssc:intro-domain}, and give more precise statements of both known results and specific questions that appear natural in light of the general questions stated in the introduction.

We first focus on convex domains with ideal boundary a quasi-circle, and then explain the relations with geodesically convex subsets in quasifuchsian, or more generally geometrically finite, hyperbolic 3-manifolds.

\subsection{Convex domains with ideal boundary a quasi-circle}
\label{ssc:thin-universal}

The simplest case to consider, topologically speaking, is a convex subset $\Omega\subset \HH^3$ such that the ideal boundary $\partial_\infty \Omega$ is just one quasicircle. Then $\partial\Omega$ -- the boundary of $\Omega$ -- is the disjoint union of two complete surfaces $\partial_-\Omega$ and $\partial_+\Omega$, each diffeomorphic to the disk $\DD$ and having $\partial_\infty \Omega$ as their common asymptotic boundary.

Both $\partial_-\Omega$ and $\partial_+\Omega$, equipped with its induced metric, is then conformal to $\DD$, the unit disk in $\R^2$, so that they can both be equipped with a conformal boundary $\partial_c(\partial_-\Omega)$ and $\partial_c(\partial_+\Omega)$, both equipped with a real projective structure. Both $\partial_c(\partial_\pm\Omega)$ are identified homeomorphically to $\partial_\infty\Omega$, and those identifications define a map $\sigma:\partial_c(\partial_-\Omega) \to \partial_c(\partial_+\Omega)$

\begin{question}
  Is this map $\sigma$ $k$-quasi-symmetric, for a constant $k$ depending only on the quasi-symmetric constant of $\partial_\infty\Omega$?
\end{question}

We can now ask whether Question $W^{\Omega}_{\HH^3}$ has a positive answer in this restricted setting.

\begin{question} \label{q:1}
  Let $h_-, h_+$ be two complete, conformal metrics of curvature $K\in [-1+\epsilon,-\epsilon]$ on the disk $\DD$, and let $\sigma:\partial \DD\to \partial \DD$ be an orientation-reversing quasi-symmetric homeomorphism. Is there a unique convex subset $\Omega\subset \R^3$, with $\partial_\infty\Omega$ a quasi-circle, such that the induced metric on $\partial_\pm\Omega$ is $h_\pm$, with gluing map at infinity corresponding to $\sigma$? 
\end{question}

The {\em existence} is proved in \cite{convexhull} when $h_-$ and $h_+$ have constant curvature $K\in [-1,0)$. This existence result therefore includes the case of pleated surfaces, corresponding to $K=-1$.

For variable curvature metrics, a result is obtained in \cite[]{weylgen}, under somewhat stronger (and probably not necessary) regularity assumptions.

\begin{definition} \label{df:bounded-der}
  Let $h$ be a complete metric on $\DD$, which can be written as $e^{2u}h_0$, where $h_0$ is the conformal hyperbolic metric on $\DD$. We will say that $h$ has {\em bounded derivatives} if for all $n\in \N$ there exists $c_n>0$ such that the norm of the $n$th derivatives of $u$ for $h_0$ is everywhere at most $c_n$.
\end{definition}

\begin{theorem}
  The answer to Question \ref{q:1} is positive if $h_-$ and $h_+$ are supposed to have bounded derivatives.
\end{theorem}

Note that the upper bound on the curvature in Question \ref{q:1} could be taken to be larger than $0$, we voluntarily state the question in a relatively restricted setting. %% \marginnote{todo: add reference to result on prescription of metric of variable curvature. But also check the regularity needed!!}

The uniqueness however remains elusive.

Similarly, Question $W^{*\Omega}_{\HH^3}$ can be stated, for convex domains with boundary at infinity a quasicircle, as follows.

\begin{question} \label{q:1*}
  Let $h_-, h_+$ be two complete, conformal metrics of curvature $K\in [-1/\epsilon,1-\epsilon]$ on the disk $\DD$, with closed geodesics of length larger than $2\pi$, and let $\sigma:\partial \DD\to \partial \DD$ be an orientation-reversing quasi-symmetric homeomorphism. Is there a unique convex subset $\Omega\subset \R^3$, with $\partial_\infty\Omega$ a quasi-circle, such that the third fundamental form on $\partial_\pm\Omega$ is $h_\pm$, with gluing map at infinity corresponding to $\sigma$? 
\end{question}

Again, the existence part is shown in \cite{convexhull} in the special case of metrics of constant curvature $K\in (-1,0)$. However, this result does not include the case of pleated surfaces, for which one would like to prescribe the bending measured laminations on the two connected components of the boundary of the convex hull of a quasicircle in $\partial_\infty\HH^3$. A possible statement is suggested by \cite[Theorem B]{laminations}, which gives an existence result for the corresponding problem in anti-de Sitter space, see Section \ref{sc:ads}. We will say that two measured laminations $\lambda_-,\lambda_+$ on the hyperbolic disk $\DD$ {\em strongly fill} if, for every $\epsilon>0$, there exists $c>0$ such that for any geodesic segment $\gamma$ in $\DD$ of hyperbolic length at least $c$, $i(\lambda_-,\gamma)+i(\lambda_+,\gamma)\geq \epsilon$. We also need to remember that a measured lamination on $\DD$ can be considered as a measure on $\partial \DD\times \partial \DD\setminus \Delta$, where $\Delta\subset \partial \DD\times \partial \DD$ is the diagonal, and notice that given a quasicircle $C\subset \partial_\infty \HH^3$, the measured bending laminations on the two connected components of boundary the convex hull of $C$ can be considered as measures on $C\times C\setminus \Delta$, where again $\Delta\subset C\times C$ is the diagonal.

\begin{question} \label{q:1*+}
  Let $\lambda_-, \lambda_+$ be two bounded measured laminations on the hyperbolic disk $\DD$ that strongly fill, without leaf of weight at least $\pi$. Is there a unique parameterized quasicircle $u:\partial \DD^1\to \partial_\infty \HH^3$ such that $u_*(\lambda_-)$ and $u_*(\lambda_+)$ are the measured bending laminations on the connected components of the boundary of the convex hull of $u(\partial \DD)$? 
\end{question}

Here a {\em parameterized quasi-circle} $u:\partial \DD\to \partial_\infty \HH^3$ is a continuous injective map such that if $v_\pm:\DD\to \partial_\infty\HH^3$ are the Riemann uniformization maps of the two connected component of $\partial_\infty\HH^3\setminus u(\partial \DD)$, then $v_\pm^{-1}\circ v:\partial \DD\to \partial \DD$ are both quasi-symmetric.

It appears possible that the type of arguments used in \cite{laminations} could lead to a proof of the existence part of Question \ref{q:1*+}, but a serious technical question (concerning compactness) needs to be resolved.

\subsection{Bounded convex domains in quasifuchsian manifolds}
\label{ssc:quasifuchsian}

The questions in Section \ref{ssc:thin-universal} take a somewhat simpler form when considered for quasicircles that are invariant under a quasifuchsian group action. Let $\Omega\subset \HH^3$ be a convex subset invariant under such an action $\rho:\Gamma\to PSL(2,\C)$, where $\Gamma$ is the fundamental group of a closed surface of genus at least $2$. The quotient of $\Omega$ by $\rho(\Gamma)$ is then a geodesically convex subset in $M=\HH^3/\rho(\Gamma)$, which by our hypothesis on $\rho$ is a {\em quasifuchsian} hyperbolic manifold.

Recall that a quasifuchsian manifold is a complete 3-dimensional hyperbolic manifold, homeomorphic to the product by an interval of a surface of genus at least $2$, and containing a non-empty, compact geodesically convex subset. A special case is constitued by {\em Fuchsian} manifolds, which contain a closed, totally geodesic surface. If $M$ is a non-Fuchsian quasifuchsian manifold, then any non-empty geodesically convex subset in $M$ is a 3-dimensional domain with boundary the disjoint union of two connected surfaces. 

Question \ref{q:1} corresponds in this restricted context to the following statement, proved in \cite{L4} (for the existence part) and \cite{hmcb} (for the uniqueness): given a closed surface $S$ of genus at least $2$ and two smooth Riemannian metrics $h_-, h_+$ of curvature $K>-1$ on $S$, there exists a unique quasifuchsian manifold $M$ homeomorphic to $S\times \R$ and a unique geodesically convex subset $\Omega\subset M$ such that the induced metrics on the two boundary components of $\Omega$ are isotopic to $h_-$ and $h_+$. Similarly, \cite{hmcb} contains an answer to Question \ref{q:1*} in the quasifuchsian setting: given two smooth metrics $h_-, h_+$ on $S$ of curvature $K<1$ with closed, contractible geodesics of length $L>2\pi$, there exists a unique quasifuchsian manifold and a unique geodesically convex subset $\Omega\subset M$ such that the third fundamental forms on the two boundary components of $\Omega$ are isotopic to $h_-$ and $h_+$.

However the uniqueness part of the same question concerning the induced metric does not extend to domains with boundary a union of convex pleated surfaces. The only closed convex pleated surfaces in a quasifuchsian manifold are the boundary components of the convex core, the smallest non-empty geodesically convex subset that they contain. Questions $W^{\Omega}_{\HH^3}$ therefore take, in this quite specific setting, the following form. 

\begin{conjecture}[Thurston] \label{cj:thurston-m}
  Let $m_-, m_+$ be two hyperbolic metrics on a closed surface $S$ of genus at least $2$. Is there a unique quasifuchsian hyperbolic structure on $S\times \R$ such that the induced metrics on the boundary components of the convex core are isotopic to $m_-$ and $m_+$? 
\end{conjecture}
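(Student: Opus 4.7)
The plan is to reduce the conjecture to the already-established smooth case (curvature $K>-1$) by approximation, and then attempt to promote existence and uniqueness through a controlled limit $K\to -1^+$. By Bers simultaneous uniformization, the space of quasifuchsian structures on $S\times\R$ is identified with $\cT_S\times\cT_S$ via the conformal structures at infinity; define
\[
\Phi:\cT_S\times\cT_S\longrightarrow \cT_S\times\cT_S,\quad \rho\mapsto ([I_-(\rho)],[I_+(\rho)]),
\]
sending a quasifuchsian structure to the isotopy classes of the induced metrics on the two components of its convex core boundary. The conjecture is the statement that $\Phi$ is a bijection, and the target is to show that $\Phi$ is continuous, proper, and locally injective, hence a homeomorphism of $\cT_S\times\cT_S$ onto itself.

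For existence, I would use that, for each $K\in(-1,0)$, the results of \cite{L4,hmcb} recalled in Section \ref{ssc:quasifuchsian} produce a unique quasifuchsian $M_K$ with a convex subset $\Omega_K$ whose two boundary components carry metrics of constant curvature $K$, isotopic to $|K|\cdot m_\pm$ (which, after rescaling by $|K|^{-1}$, become $m_\pm$). Taking $K\to -1^+$, I would establish precompactness of the sequence $(M_K)$ in the deformation space of quasifuchsian representations by combining Lemma \ref{lm:quasisym}, which gives uniform principal-curvature bounds on $\partial\Omega_K$, with a lower injectivity bound coming from the fact that $m_\pm$ are fixed hyperbolic metrics of finite area. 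The limit $M_\infty$ would then be quasifuchsian, its convex core would arise as the Hausdorff limit of the $\Omega_K$, and its pleated boundary components would carry induced metrics isotopic to $m_\pm$.

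For uniqueness, assuming two quasifuchsian structures $M_1$, $M_2$ both realize $(m_-,m_+)$, I would equidistantly push the convex core boundary of each $M_i$ inward by a small distance $r>0$, producing $C^{1,1}$ strictly convex boundaries whose induced metrics $h_{i,r}^\pm$ lie in the curvature window $(-1,0)$. If one can arrange, by choosing $r$ carefully in each $M_i$, that $h_{1,r_1}^\pm = h_{2,r_2}^\pm$ in $\cT_S$, then the uniqueness statement of \cite{hmcb} forces $M_1\cong M_2$. An alternative route is to linearize $\Phi$ at a pleated configuration along Lipschitz deformations of the representation, prove injectivity of the differential via a Bonahon--Schl\"afli-type first-variation identity (coupling the variation of $I_\pm$ to the bending lamination), and combine this local injectivity with the properness obtained in the existence step.

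The hardest part will be uniqueness: the equidistant regularization does not a priori preserve the matching between $M_1$ and $M_2$, because the bending lamination is part of the limiting data and distorts the regularized induced metric in a manner that depends on the ambient manifold. Overcoming this requires a first-variation formula for the induced metric on the convex core boundary, analogous to the Bonahon--Schl\"afli identity for the volume of the convex core, combined with an infinitesimal rigidity statement asserting that no nontrivial first-order deformation of $\rho$ preserves both $[I_-]$ and $[I_+]$ at a pleated configuration. This infinitesimal rigidity is essentially the derivative of the conjecture itself and appears to be the central unresolved obstruction; it is tightly linked to the parallel (and equally open) Thurston conjecture on prescribing the measured bending laminations on the two components of the convex core boundary.
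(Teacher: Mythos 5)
This statement is labeled a \emph{conjecture} in the paper, and the paper offers no proof of it: it records only that the existence part is known (as a consequence of Epstein--Marden or Labourie \cite{L4}) and that uniqueness is open. Your proposal does not close that gap, and you essentially concede this in your final paragraph. Concretely, both of your uniqueness routes fail. The equidistant regularization does not work because the induced metric on the surface at distance $r$ from the convex core boundary is determined not by $m_\pm$ alone but by the pair $(m_\pm, l_\pm)$, where $l_\pm$ are the bending laminations (this is visible in Section \ref{ssc:bending}: the third fundamental form of the equidistant surface degenerates to the transverse measure of $l_\pm$ as $r\to 0$, so the full shape operator, hence the induced metric $I_r=I(\cosh(r)E+\sinh(r)B\,\cdot,\,\cosh(r)E+\sinh(r)B\,\cdot)$, remembers the bending). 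Two quasifuchsian manifolds realizing the same $(m_-,m_+)$ could a priori have different bending data, so no choice of $r_1, r_2$ makes the regularized metrics agree, and the uniqueness theorem of \cite{hmcb} cannot be invoked. (Also note the regularizing surfaces sit on the \emph{concave} side, toward the ends, not ``inward''.) The linearization route presupposes exactly the infinitesimal rigidity at a pleated configuration that constitutes the open problem; a Bonahon--Schl\"afli-type first-variation identity gives you a formula, not injectivity of the differential, and no such injectivity statement is currently available.

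Your existence sketch (approximate by constant-curvature $K$ boundaries with $K\to -1^+$ and extract a limit) is broadly consistent with how the known existence results are obtained, though the compactness input you cite (Lemma \ref{lm:quasisym}) concerns quasicircles and gluing maps rather than the equivariant degeneration you need, and the actual control comes from the cocompact group action. Since existence is already attributed in the paper to \cite{epstein-marden,L4}, the only new content a proof would need is uniqueness, and that is precisely what remains unproved here. The proposal should be regarded as a (reasonable) survey of why the conjecture is hard, not as a proof.
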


The existence part of this conjecture is known, it is a consequence of results of Epstein-Marden \cite{epstein-marden} or Labourie \cite{L4}.

Question $W^{*\Omega}_{\HH^3}$ takes, in this special case of pleated surfaces in quasifuchsian manifolds, the form of another conjecture of Thurston: non-Fuchsian quasifuchsian manifolds are uniquely determined by the measured pleating lamination on the boundary of their convex core. The existence part of this question was settled by Bonahon and Otal \cite{bonahon-otal}, who completely characterized the possible measured bending lamination, while the uniqueness was recently proved in \cite{uniqueness}.

\subsection{Bounded convex domains in convex co-compact manifolds}
\label{ssc:cocompact}

A {\em convex co-compact} hyperbolic manifold is a complete, 3-dimensional hyperbolic manifold which contains a non-empty, compact, geodesically convex subset. Quasifuchsian manifolds are among the simplest examples. 

Most of the results quoted above for quasifuchsian manifolds extend to convex co-compact hyperbolic manifolds. 

Other results can be found in \cite{ideal,hphm} for geodesically convex domains in quasifuchsian manifolds which are locally like an ideal or hyperideal hyperbolic polyhedron. More recently, Prosanov has obtained several results which are directly related to Questions $W^{\Omega}_{\HH^3}$ and $W^{*\Omega}_{\HH^3}$. In \cite{prosanov:dual}, he considers convex domains in convex co-compact hyperbolic manifolds which are ``polyhedral'', in the sense that their boundaries are locally isometric to that of compact polyhedra in $\HH^3$, and he shows that those convex domains are uniquely determined by their {\em dual metrics} -- the dual metric is the analog, in this polyhedral setting, of the third fundamental form. In \cite{prosanov:polyhedral}, he considers the same class of convex polyhedral domains in convex co-compact hyperbolic manifolds, but focuses on the induced metrics on their boundaries. This leads him to introduce a wider class of convex subsets in convex co-compact manifolds, which he calls ``bent'', and he shows a broad realization result for those induced metrics. He also exhibits a large class of boundary metrics which are realized on polyhedral (rather than bent) domains.

Questions $W^{\Omega}_{\HH^3}$ and $W^{*\Omega}_{\HH^3}$ cover those cases, in the sense that a positive answer to those questions would provide a new proof of the existence and uniqueness of convex co-compact manifold containing a geodesically convex subset with prescribed induced metric (or third fundamental form) on the boundary. It would be sufficient to have a positive answer to Questions $W^{\Omega}_{\HH^3}$ and $W^{*\Omega}_{\HH^3}$ under the additional asumption that the boundary of each connected component of $U$ is a quasi-disk. In fact the following question is a very special case of Question $W^{\Omega}_{\HH^3}$.

\begin{question}
  Let $\Gamma$ be a Jordan curve in $\CP^1$, and let $h_-$ and $h_+$ be complete metrics of constant curvature $K\in [-1,0)$ on the two connected component of $\CP^1\setminus \Gamma$. Is there a unique convex subset $\Omega\subset \HH^3$ such that there is a conformal homeomorphism from $\CP^1$ to $\partial_0\Omega$ (with the notations of Question $W^{\Omega}_{\HH^3}$) which sends $\Gamma$ to $\partial_\infty\Omega$ and is an isometry on $\CP^1\setminus \Gamma$? %% \marginnote{Une notion of induced metric conformal structure?}
\end{question}

This question can also be stated in terms of the induced metric conformal structures see in Definition \ref{df:I}.

It is proved in \cite{convexhull} that the existence part of this statement holds when $\Gamma$ is a quasicircle. However it is not clear whether this condition ($\Gamma$ is a quasicircle) is necessary, whether for existence or for uniqueness.

\subsection{Fuchsian results}
\label{ssc:fuchsian}

Special cases of the questions considered in the previous section are known to hold for surfaces that are invariant under a Fuchsian group action --- that is, a group action that leaves invariant a plane. This happens when the two metrics $h_-$ and $h_+$ considered above are identical. Existence and uniqueness results of this type were obtained for the third fundamental form of smooth surfaces in $\HH^3$ in \cite{iie}, and for the induced metrics and dihedral angles on polyhedral surfaces in e.g. \cite{fillastre2,fillastre3,fillastre4,leibon1,prosanov:ideal} and even for general convex surfaces in Fuchsian manifolds \cite{prosanov:fuchsian}.

Another relatively simple setting is that of surfaces invariant under a parabolic group action. This corresponds geometrically to the universal cover of a surface embedded in a ``cusp'', a hyperbolic manifold homeomorphic to $T^2\times \R$, where $T^2$ is the torus. See for instance \cite{fillastre-izmestiev}.

In view of those results for surfaces invariant under a Fuchsian group action --- and therefore with an ideal boundary that is a round circle --- it seems natural to ask whether Question $W^{\Omega}_{\HH^3}$ has a positive answer assuming only that the ideal boundary is a circle (but without assuming invariance under a group action).

\begin{question}
  Let $h$ be a complete conformal metric on the disk $\DD$, with curvature $K>-1$. Is there a unique isometric embedding of $(\DD, h)$ in $\HH^3$ such that the ideal boundary of the image is a round circle?
\end{question}

We can also ask the dual question.

\begin{question}
  Let $h$ be a complete conformal metric on the disk $\DD$, with curvature $K<1$ and closed geodesics of length $L>2\pi$. Is there a unique embedding of $(\DD, h)$ in $\HH^3$ with third fundamental form $h$ such that the ideal boundary of the image is a round circle?
\end{question}

In addition, those two questions can also be considered for ``parabolic'' embeddings, that is, when the image surface has only one point at infinity.

\begin{question}
  Let $h$ be a complete hyperbolic metric on the disk $\DD$, with curvature $K>-1$. Assume that $(\DD, h)$ is conformal to $\C$. Is there a unique isometric embedding of $(\DD, h)$ in $\HH^3$ such that the ideal boundary of the image is a point?
\end{question}

We can also ask the dual question.

\begin{question}
  Let $h$ be a complete hyperbolic metric on the disk $\DD$, with curvature $K<1$ and closed geodesics of length $L>2\pi$. Assume that $(\DD, h)$ is conformal to $\C$. Is there a unique embedding of $(\DD, h)$ in $\HH^3$ with third fundamental form $h$ such that the ideal boundary of the image is a point?
\end{question}

From a technical point of view, the most interesting feature of those questions is that the infinitesimal rigidity, which is arguably the most challenging aspect of the questions considered here, might be amenable to relatively simple geometric arguments.

% \subsection{Bounded convex domains in convex co-compact manifolds}
% \label{ssc:kleinian}

% The same questions can be considered more generally in geometrically finite hyperbolic manifolds. The questions mentioned in Section \ref{ssc:quasifuchsian} on the induced metrics and third fundamental forms on the boundary of geodesically convex subsets of quasifuchsian manifolds can be stated, basically in the same manner, for Kleinian manifolds. For smooth metrics of curvature $K>-1$ (resp. third fundamental forms of curvature $K<1$ with closed, contractible geodesics of length $L>2\pi$) the results of \cite{L4,hmcb} also apply for convex co-compact hyperbolic manifolds. For pleated surfaces (induced metrics of constant curvature $-1$, resp. measured bending laminations) Conjectures \ref{cj:thurston-m} and \ref{cj:thurston-l} have been stated by Thurston in this setting, and the existence parts also hold for both conjectures.

% It should be pointed out that Questions $W^{thin}_{\HH^3}$ and $W^{*thin}_{\HH^3}$ as stated do not cover the case of bounded convex domains in Kleinian manifolds.

 %% Convex domains

\section{Immersed locally convex surfaces}
\label{sc:immersions}

We now turn to Question $W^{imm}_{\HH^3}$ and its dual, Question  $W^{*imm}_{\HH^3}$. We focus here mostly on those questions when restricted to locally convex pleated surfaces, possibly with cusps, where they correspond to either relatively recent results or to well-known open questions. Considering more general cases of Questions $W^{imm}_{\HH^3}$ and $W^{*imm}_{\HH^3}$ leads to new questions on the grafting map, and on new perspectives on questions concerning circle packings or circle patterns.

\subsection{Pleated surfaces and grafting}

\subsubsection{Immersed pleated surfaces in $\HH^3$}

Perhaps the simplest instance of Question $W^{imm}_{\HH^3}$ is when $h$ is hyperbolic, that is, of constant curvature $-1$. The image of $\phi$ then needs to be a locally convex, pleated surface in $\HH^3$. The question can then be formulated in the following simpler form.

\begin{question} \label{q:grafting-univ}
  Let $\sigma:\RP^1\to \RP^1$ be a quasi-symmetric homeomorphism. Is there a unique locally convex pleated isometric immersion $\phi:\HH^2\to \HH^3$ such that the induced metric gluing map of $\phi$ is $\sigma$?
\end{question}

Here the gluing map that is considered is between the induced metric and the pull-back by the Gauss map of the conformal class at infinity, see Section \ref{ssc:complete}.

Question $W^{*imm}_{\HH^3}$ takes for pleated surfaces a slightly different from, since the notion of gluing map has to be formulated a bit differently between a measured lamination and a conformal disk. Recall that given a locally convex immersed surface $S\subset \HH^3$, the hyperbolic Gauss map $G:S\to \partial_\infty \HH^3$ sends a point $x\in S$ to the endpoint at infinity of the geodesic ray starting from $x$ orthogonally to $S$.

This definition extends to locally convex pleated surfaces, but the hyperbolic Gauss map is then defined on the unit normal bundle $N^1S$ of $S$ (the space of unit vectors normal to oriented support planes of $S$).

\begin{question} \label{q:grafting-univ*}
  Let $l$ be a bounded measured lamination on the disk $\DD$. Is there a unique immersed locally convex pleated surface $\Sigma\subset \HH^3$ together with a map $u:\DD\to N^1\Sigma$ which is a parameterization of $N^1\Sigma$ such that
  \begin{itemize}
  \item $G\circ u$ is conformal, where $G$ is the hyperbolic Gauss map of $\Sigma$,
  \item the pull-back by $u$ of the measured bending lamination of $\Sigma$ corresponds to $l$?
  \end{itemize}
\end{question}

Note that the pull-back by $u$ of the measured bending lamination of $S$ is naturally a measured {\em foliation} on $\DD$, and the second point requests that this measured foliation corresponds to $l$ under the natural correspondence between measured foliations and measured laminations on $\DD$. Prescribing the measured foliation on the disk at infinity is similar, heuristically at least, to prescribing the ``gluing'' between the measured pleating lamination and the conformal structure at infinity. (Note also that Question \ref{q:grafting-univ*} is stated in a slightly indirect way, because $G$ is not necessarily injective.)

%% question for the measured bending lamination??

\subsubsection{Pleated surfaces in hyperbolic ends}

Questions \ref{q:grafting-univ} and \ref{q:grafting-univ*} can be considered in the special case of surfaces invariant under a surface group representation $\rho:\pi_1(S)\to PSL(2,\C)$. They have positive answers in those restricted cases, and are in fact equivalent to two questions on the grafting map which have been solved previously.

Recall that, given a closed surface $S$ of genus at least $2$, the grafting map $gr:\cT_S\times \cML_S\to \cT_S$ is such that, if a quasifuchsian manifold $M$ (or more generally a hyperbolic end $E$) contains a locally convex pleated surface $S$ with induced metric $m$ and measured bending lamination $l$, then the conformal structure at infinity on the ideal boundary component facing $S$ (on the concave side of $S$) is $gr(m,l)$, see e.g. \cite{dumas-survey}.

In this case, the gluing map $\sigma:\RP^1\to \RP^1$ in Question \ref{q:grafting-univ} is equivariant under a pair or representations $\rho_1,\rho_2:\pi_S(S)\to PSL(2,\R)$. Then $\HH^2/\rho_1(\pi_S(S))$ is the quotient of the pleated surface in $\HH^3$, while $\HH^2/\rho_2(\pi_1(S))$ is the hyperbolic metric in the pull-back by the hyperbolic Gauss map of the conformal class at infinity of $\HH^3$. This conformal class is obtained by {\em grafting} the hyperbolic metric on the pleated surface along its measured bending lamination. 

Question \ref{q:grafting-univ}, in this restricted context, is therefore equivalent to the following result of Dumas and Wolf \cite{dumas-wolf}.

\begin{theorem}[Dumas--Wolf] \label{tm:dumas-wolf}
  Let $h$ be a hyperbolic metric on a closed surface $S$. The grafting map $gr(h,\cdot):\cML_S\to \cT_S$ is a homeomorphism.
\end{theorem}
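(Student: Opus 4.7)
The plan is to show that $gr(h, \cdot): \cML_S \to \cT_S$ is continuous, proper, and injective, and then invoke invariance of domain --- both spaces are topologically $\R^{6g-6}$ --- to conclude it is a homeomorphism onto $\cT_S$. The first two properties are geometric and comparatively routine; injectivity is the main obstacle and is where the harmonic map machinery of Dumas--Wolf enters.

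\textbf{Continuity.} For a weighted simple closed curve $l = t\gamma$, grafting cuts $(S,h)$ along the $h$-geodesic representative of $\gamma$ and inserts a flat cylinder of circumference $\ell_h(\gamma)$ and height $t$; the resulting conformal structure depends continuously on $t$. By density of weighted multicurves in $\cML_S$ and uniform control of the inserted flat pieces under small perturbations of the lamination, $gr(h, \cdot)$ extends continuously to all of $\cML_S$.

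\textbf{Properness.} If $l_n \to \infty$ in $\cML_S$, then $gr(h, l_n)$ contains a flat annular region whose modulus grows unboundedly in the transverse mass of $l_n$. Consequently the extremal length of an appropriate isotopy class of curves on $gr(h, l_n)$ degenerates to $0$, so $gr(h, l_n)$ leaves every compact subset of $\cT_S$.

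\textbf{Injectivity} is the heart of the theorem and is where I expect the real difficulty. The idea is to recover $l$ from $X := gr(h, l)$ via an equivariant harmonic map. Let $f: X \to (S,h)$ be the unique harmonic map in the identity homotopy class, and let $\Phi(f) \in H^0(X, K_X^{\otimes 2})$ be its Hopf differential. Under the Hubbard--Masur correspondence, the horizontal measured foliation of $\Phi(f)$ determines an element $l(X) \in \cML_S$; the claim is that $l(X) = l$, up to the standard factor-of-two normalization. Establishing this identity is delicate: one analyses $f$ separately in the ``grafted region'' (the inserted Euclidean cylinders), where $f$ is nearly the orthogonal projection onto the underlying pleated surface, and in the ``hyperbolic region'', where $f$ is nearly an isometry. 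Quantifying this decomposition calls on the Cheng--Yau maximum principle, refined estimates on $\Phi(f)$ in tubular neighbourhoods of the grafting locus, and ultimately an asymptotic comparison of $f$ with the collapsing map from $X$ onto the dual $\R$-tree of $l$.

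\textbf{Assembly.} A continuous, proper, injective map between topological manifolds of the same dimension is a homeomorphism onto an open subset of the target by invariance of domain; properness then forces the image to be closed, hence equal to $\cT_S$. This yields the desired homeomorphism $gr(h, \cdot): \cML_S \to \cT_S$.
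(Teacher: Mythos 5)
First, a point of reference: the paper does not prove this statement at all --- it quotes it from Dumas--Wolf \cite{dumas-wolf} as the known result that answers Question \ref{q:grafting-univ} in the equivariant setting. So your proposal has to be judged against the actual Dumas--Wolf argument. Your skeleton (continuity, properness, injectivity, then invariance of domain plus closedness of the image) is the right shape and the assembly step is fine. The genuine gap is the injectivity step, which you rightly call the heart of the matter but whose proposed mechanism fails. You propose to recover $l$ from $X=gr(h,l)$ as the horizontal measured foliation of the Hopf differential $\Phi(f)$ of the harmonic map $f:X\to (S,h)$, claiming $l(X)=l$ up to a fixed constant. This exact identity is false, by a scaling count: for $l=t\gamma$ with $t$ small, $X_t=gr(h,t\gamma)$ deviates from $(S,h)$ at order $t$, so $\Phi(f_t)$ has norm of order $t$; but the transverse measure of the horizontal foliation of a quadratic differential scales like the \emph{square root} of the differential, so the lamination you extract has mass of order $\sqrt{t}$ while $t\gamma$ has mass of order $t$. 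No fixed normalization reconciles these, so the map $X\mapsto l(X)$ is not a left inverse of $gr(h,\cdot)$. What is actually true (by Dumas' earlier work) is only an asymptotic statement as $t\to\infty$, where $\Phi(f_t)$ is comparable to $-t^2/4$ times the Hubbard--Masur differential of $\gamma$; that controls behaviour near the Thurston boundary but yields no injectivity at finite scale.

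Two further remarks. Your properness argument via an embedded flat annulus of large modulus only applies when the lamination is supported on simple closed curves; for a sequence $t_n\lambda_n$ with $\lambda_n$ filling there is no such annulus, and one must instead use area and length estimates for the Thurston metric on $gr(h,l)$ (whose area is $2\pi|\chi(S)|+\ell_h(l)$) to force some extremal length to degenerate. More importantly, the Dumas--Wolf proof does not construct an explicit inverse at all: they establish properness by such extremal-length versus intersection-number estimates, and they obtain injectivity by showing that $gr(h,\cdot)$ is tangentiable with injective one-sided tangent maps --- computed through energies of equivariant harmonic maps from $gr(h,l)$ to the $\R$-trees dual to measured laminations --- and then combine this with properness in a degree-type argument. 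If you want to keep your outline, the injectivity step has to be rebuilt along those lines; the single harmonic map to the fixed hyperbolic target does not remember $l$ exactly.
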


Still for pleated surfaces invariant under a surface group representation in $PSL(2,\C)$, the measured bending lamination considered above is the lift to the universal cover of a measured bending lamination $l$ on $S$, while the condition in Question \ref{q:grafting-univ*} is equivalent to asking that the pull-back on $S$ by the hyperbolic Gauss map of the conformal structure at infinity is prescribed. A positive anwer to Question \ref{q:grafting-univ*} in this equivariant case therefore follows from a result of Scannell and Wolf \cite{scannell-wolf}.

\begin{theorem}[Scannell-Wolf] \label{tm:scannell-wolf}
  Let $l$ be a measured lamination on a closed surface $S$ of genus at least $2$. The grafting map $gr(\cdot, l):\cT_S\to \cT_S$ is an analytic diffeomorphism.
\end{theorem}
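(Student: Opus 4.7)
The plan is to combine three properties of $F:=gr(\cdot,l):\cT_S\to \cT_S$: it is real-analytic, it is proper, and its differential is everywhere an isomorphism. Because $\cT_S$ is diffeomorphic to a cell and in particular simply connected, a proper local diffeomorphism from $\cT_S$ to itself is automatically a global diffeomorphism, so these three ingredients together give the theorem.

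First I would establish real-analyticity by routing the argument through projective structures. The projective grafting $Gr_l(X)$ is a complex projective structure on $S$ whose underlying conformal structure is precisely $gr(X,l)$; moreover its developing map and holonomy representation depend real-analytically on $X\in \cT_S$ when $l$ is held fixed, since the bending cocycle along $l$ enters as a fixed ingredient and only the hyperbolic metric on the pieces varies analytically. The forgetful map $\cCP_S\to \cT_S$ is itself analytic, so $F$ is real-analytic as a composition of analytic maps. For properness, if $X_n\to \infty$ in $\cT_S$ then some simple closed curve $\gamma$ has $\mathrm{Ext}_{X_n}(\gamma)\to 0$ or $\to \infty$. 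Using the explicit description of the grafted surface (hyperbolic pieces glued along Euclidean strips of width equal to the $l$-mass of the transverse cuts), I would compare extremal lengths on $gr(X_n,l)$ and on $X_n$: the grafted strips can only increase moduli by a controlled amount depending on $i(\gamma,l)$, and every short curve in $gr(X_n,l)$ either comes from a short curve in $X_n$ or from a leaf of $l$ of large weight. Since $l$ is fixed this forces $F(X_n)\to \infty$.

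The main obstacle is showing that $dF_X$ is injective at every $X\in \cT_S$; the two Teichm\"uller spaces have the same dimension, so injectivity entails bijectivity. I would parametrize $T_X\cT_S$ by harmonic Beltrami differentials with respect to $X$, and differentiate the grafting construction along a path $X_t$ to obtain a curve $Gr_l(X_t)$ in $\cCP_S$ whose image in $\cT_S$ under the forgetful map is $F(X_t)$. An element of $\ker(dF_X)$ then corresponds to an infinitesimal variation of the projective structure $Gr_l(X)$ that is tangent to the fiber $Q(F(X))$ of $\cCP_S\to \cT_S$, i.e.\ a holomorphic quadratic differential $\phi$ on $F(X)$ arising from a nontrivial deformation $\dot X$ of $X$ while leaving the underlying conformal structure fixed to first order. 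Following the strategy of Scannell--Wolf, I would translate the vanishing of $\phi$ into a linear elliptic PDE for a section over the grafted surface, and show the associated real-linear operator is injective. The key analytic input is that this operator is a real-self-adjoint perturbation of the Beltrami Laplacian with a zero-order term of definite sign coming from the negative curvature of $X$, so the maximum principle (or an integration-by-parts energy argument) rules out a nontrivial kernel.

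Putting the pieces together: $F$ is a real-analytic local diffeomorphism that is proper, hence a real-analytic covering map $\cT_S\to \cT_S$. Since the target is simply connected, $F$ is a real-analytic diffeomorphism, proving the theorem.
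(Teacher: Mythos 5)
This statement is an attributed citation in the paper (to Scannell--Wolf), and the paper gives no proof of it at all; it is used as external input to answer Question~\ref{q:grafting-univ*} in the equivariant case. So there is no in-paper argument to compare yours against, and your proposal has to be judged as a reconstruction of the published proof.

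As such, your architecture is the right one and matches the known proof: real-analyticity, properness, and injectivity of the differential, assembled via the fact that a proper local diffeomorphism onto the cell $\cT_S$ is a covering map and hence a global diffeomorphism. The properness sketch via length/extremal-length comparison is in the spirit of what is actually done. But there are two genuine gaps. First, the entire difficulty of the theorem is concentrated in the injectivity of $dF_X$, and at exactly that point you write that you would ``follow the strategy of Scannell--Wolf'' and invoke an unspecified elliptic operator with a sign condition; this is not a proof step but a pointer to the result being proved. The actual argument requires setting up a concrete deformation equation on the grafted surface (in the multicurve case, on the hyperbolic pieces and the inserted flat cylinders, with matching conditions along the grafting curves) and a careful integration-by-parts/maximum-principle analysis; none of that is supplied, and it is not routine. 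Second, your description of $gr(X,l)$ as ``hyperbolic pieces glued along Euclidean strips'' and your analyticity argument implicitly assume $l$ is a weighted multicurve. For a general measured lamination the grafted region is a foliated annular set with no finite strip decomposition, and even the differentiability of $X\mapsto gr(X,l)$ is nontrivial; Scannell--Wolf pass from multicurves to general laminations by a separate approximation/continuity argument using density of weighted multicurves in $\cML_S$, properness, and continuity of grafting. That reduction is missing from your outline, so as written the proposal at best covers the multicurve case modulo the key analytic lemma.
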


\subsubsection{Grafting and pleated disks in $\HH^3$}

Questions \ref{q:grafting-univ} and \ref{q:grafting-univ*} can be formulated in terms of grafting on the hyperbolic disk. This is particularly natural in view of Thurston's Earthquake Theorem, which states that any quasi-symmetric homeomorphism of $\RP^1$ is the boundary map of a unique left earthquake on the hyperbolic plane (see Section \ref{sssc:earthquakes}). Indeed, earthquakes are intimately related to grafting (see e.g. \cite{mcmullen:complex}) while we will see below that Thurston's Earthquake Theorem gives a positive answer to question $W^{imm}_{AdS}$ in the special case of locally convex pleated disks in $AdS^3$. 

To state the questions in a simple manner, we introduce notations for the grafting map on the hyperbolic disk. Let $l$ be a bounded measured lamination on $\DD$. Let $\Sigma\subset \HH^3$ be a pleated disk in $\HH^3$ obtained by pleating the hyperbolic disk along the measured bending lamination $l$, and let $G:N^1\Sigma\to \CP^1$ be the hyperbolic Gauss map. We also denote by $u:\DD\to N^1\Sigma$ the Riemann uniformization map for the conformal structure obtained on $N^1\Sigma$ by pull-back of the conformal structure at infinity of $\HH^3$. Finally we denote by $\pi:N^1\Sigma\to \Sigma$ the canonical projection.

\begin{lemma} \label{lm:qconf}
  There exists $\delta>0$ such that, if $l$ is bounded, then the map $\pi\circ u:\DD\to \Sigma$ is within distance $\delta$ from a quasi-conformal map, with quasi-conformal constant depending only on the bound on $l$.
\end{lemma}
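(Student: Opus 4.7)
The plan is to combine the identification of $(N^1\Sigma, G^* c_\infty)$ with a Thurston grafting of the hyperbolic disk along $l$ with a uniform comparison between Thurston's collapse $\pi$ and the Riemann uniformization $u$. Concretely, along each weighted leaf $\gamma$ of $l$ with weight $w_\gamma < \pi$, the fiber of $N^1\Sigma$ over $\gamma$ is an arc of oriented support planes of angular width $w_\gamma$, which the Gauss map $G$ parametrizes as a Euclidean strip of width $w_\gamma$ in $\CP^1$; away from the support of $l$, $G$ is a local conformal developing of the intrinsic hyperbolic structure on $\Sigma \cong \HH^2$. Hence $(N^1\Sigma, G^*c_\infty)$ coincides with $\mathrm{gr}_l(\HH^2)$ and carries the Thurston metric $h_{Th}$, hyperbolic on the complement of the grafted strips and Euclidean on each strip, in the conformal class of $G^*c_\infty$.

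Next I would invoke the key uniform estimate from the theory of bounded complex projective structures: for $\|l\|_\infty \leq M$, the Thurston metric $h_{Th}$ and the uniformizing hyperbolic metric $h_u := u_* h_\DD$ on $N^1\Sigma$ are bilipschitz equivalent with constant $C_M$ depending only on $M$. In the closed-surface setting this is classical (Tanigawa, Kulkarni-Pinkall, Dumas and others); in the present universal-cover setting it can be obtained either by approximation by equivariant laminations on quasifuchsian quotients, or by a direct Schwarz-Ahlfors-type comparison using the explicit local form of $h_{Th}$. Equivalently, $u$ becomes $C_M$-bilipschitz as a map $(\DD, h_\DD) \to (N^1\Sigma, h_{Th})$.

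Finally I would build a bilipschitz homeomorphism $\tilde\pi \colon (N^1\Sigma, h_{Th}) \to (\Sigma, h_\Sigma)$ close to $\pi$: set $\tilde\pi = \pi$ outside the grafted strips (where $\pi$ is already an isometry), and on each grafted strip $\gamma \times [0, w_\gamma]$ take a bilipschitz map onto a Fermi tube of half-width $\eta_\gamma \leq w_\gamma / 2$ around $\gamma$ in $\Sigma$, matching $\pi$ on the two bounding arcs; the $\eta_\gamma$ are chosen small enough that tubes around distinct leaves remain pairwise disjoint, which is possible since $\|l\|_\infty \leq M$ controls the transverse measure across any unit segment. The resulting bilipschitz constant depends only on $M$, and $d_\Sigma(\tilde\pi(x), \pi(x)) \leq w_\gamma / 2 < \pi/2$ throughout. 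The composition $w := \tilde\pi \circ u$ is then quasi-conformal from $(\DD, h_\DD)$ to $(\Sigma, h_\Sigma)$ with QC constant depending only on $M$, and $d_\Sigma(w(y), \pi \circ u(y)) < \pi/2$ for every $y \in \DD$; taking $\delta = \pi/2$ concludes the proof.

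The principal obstacle lies in the second step, the uniform bilipschitz comparison between $h_{Th}$ and $h_u$ on the full disk: the closed-surface arguments do not apply verbatim, and a careful local analysis near each grafted strip is required, using the explicit form of the Thurston metric and the bounded-geometry consequences of $\|l\|_\infty \leq M$ to control the passage from $h_{Th}$ to $h_u$ on a neighborhood of every strip of bounded width.
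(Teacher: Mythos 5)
Your first two steps are sound and match the standard picture underlying the paper's one-line reference to Epstein--Marden: the Gauss map does identify $(N^1\Sigma, G^*c_\infty)$ with the projective structure $\mathrm{gr}_l(\HH^2)$ obtained by grafting the disk along $l$, with the lune over each bent leaf conformally a Euclidean strip of width $w_\gamma$, and the comparison between the Thurston metric and the Poincar\'e metric is indeed available here. (In fact you overstate the difficulty of step two: the Kulkarni--Pinkall/Thurston comparison is a universal, purely local statement valid for any projective structure of hyperbolic type, so no equivariant approximation is needed; the constant does not even depend on $M$.)

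The genuine gap is in your third step. The map $\tilde\pi$ that sends the Euclidean strip $\gamma\times[0,w_\gamma]$ onto a Fermi tube of half-width $\eta_\gamma$ around $\gamma$, matching $\pi$ on the bounding arcs, has conformal dilatation of order $w_\gamma/\eta_\gamma$, and the requirement that the tubes be pairwise disjoint can force $\eta_\gamma$ to be arbitrarily small compared with $w_\gamma$. The bound $\|l\|_\infty\leq M$ controls the total transverse measure of unit segments but does \emph{not} bound the ratio between a leaf's weight and its distance to neighbouring leaves: take two asymptotic geodesics in $\HH^2$, each carrying an atom of weight $1$; this is a bounded lamination ($M=2$), yet along the ray where the leaves converge the tubes must have half-width tending to $0$ while the strips keep width $1$, so $\tilde\pi$ is neither uniformly bilipschitz nor uniformly quasiconformal, and the composition $\tilde\pi\circ u$ fails to have a QC constant depending only on $M$. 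This is exactly the difficulty that the Epstein--Marden (and Epstein--Marden--Markovic) machinery -- the angle-scaling/complex-earthquake interpolation between the grafted structure and the hyperbolic structure, with its non-trivial quasiconformality estimates -- is designed to handle; it cannot be bypassed by straightening each strip into its own tube. A secondary point: as stated the lemma asks for a single $\delta$, and your displacement bound $w_\gamma/2$ is only controlled if you additionally assume atoms have weight less than $\pi$ (natural for convex pleated surfaces, but worth making explicit).
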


\begin{proof}
  The proof can be done following the arguments in \cite{epstein-marden}, in a closely related context. 
\end{proof}

As a consequence of Lemma \ref{lm:qconf}, the map $\pi\circ u:\DD\to \Sigma$ extends so a quasi-symmetric map from $\RP^1$ to $\partial_\infty\Sigma$. 

Moreover, $l$ lifts to a measured foliation on $N^1\Sigma$. For instance, if $l$ contains a leaf $\cL$ with atomic weight $w$, then the inverse image of $\cL$ by $\pi$ is a strip in $N^1\Sigma$, which is foliated by parallel lines, with total transverse weight $w$. This measured foliation can be pulled back to $\DD$ by $u$, yielding a measured foliation on $\DD$, which corresponds to a unique measured lamination on $\DD$. 

\begin{definition}
  We denote by:
  \begin{itemize}
  \item $GR_0(l):\RP^1\to \partial_\infty\Sigma$ the boundary extension of $\pi\circ u$,
  \item $GR_1(l)$ the measured lamination corresponding to $(\pi\circ u)^*(l)$ on $\DD$,
  \item $GR(l)=(G_0(l), G_1(l))$.
  \end{itemize}
\end{definition}

Note that it is also possible to define a ``universal grafting map'', which associates to a measured lamination on $\DD$ a complex projective structure also on $\DD$. This map however does not appear to be directly related to the topics considered here.

It follows quite directly from the definition that Question \ref{q:grafting-univ} can be stated equivalently as follows.

\begin{question} \label{q:grafting-univ2}
  Is the map $GR_0:\cML\to \cT$ a homeomorphism from bounded measured laminations to quasi-symmetric homeomorphisms?
\end{question}

Moreover, Question \ref{q:grafting-univ*} can be reformulated equivalently in terms of grafting, too.

\begin{question} \label{q:grafting-univ2*}
  Is the map $GR_1:\cML\to \cML$ a homeomorphism from bounded measured laminations to bounded measured laminations? 
\end{question}

% A positive answer to Question \ref{q:grafting-univ2} would generalize Theorem \ref{tm:dumas-wolf}, while a positive answer to Question \ref{q:grafting-univ2*} would extend part of Theorem \ref{tm:scannell-wolf} (without the real analytic regularity).

% Indeed, Theorem \ref{tm:dumas-wolf} would follow from a positive answer to Question \ref{q:grafting-univ2} because, given a hyperbolic metric $h$ on a closed surface $S$ and a conformal structure on $S$, there would be a unique measured lamination $l$ on the hyperbolic disk such that $GR_0(l)$ is the gluing map at infinity between the lift to the disk of $h$ and the lift to the disk of $c$ (which is uniquely determined by $h$ and $c$). This measured lamination would then (by uniqueness) be invariant under the action of $\pi_1(S)$ on the hyperbolic disk, and would therefore go down on $S$ as a measured laminations such that $gr(l)(h)=c$.

% Similarly, given a measured lamination $\lambda$ and a conformal structure $c$ on $S$, a positive answer to Question \ref{q:grafting-univ2*} would imply that there exists a unique measured lamination $l$ on $\DD$ such that $GR_1(l)=\tilde\lambda$, where $\tilde \lambda$ is the lift of $\lambda$ to $\DD$ by the projection determined by $c$. Since it is unique, $l$ is invariant under the representation

The equivalence between Question \ref{q:grafting-univ*} and Question \ref{q:grafting-univ2*} should be quite clear from the definitions.

\subsection{Circle patterns and circle packings}

This section is focused on a slightly different type of pleated surfaces in $\HH^3$ and in hyperbolic manifolds: those which have cusps, and therefore look locally like ideal hyperbolic polyhedra. Those surfaces are closely related to circle packings and to Delaunay circle patterns (see below). Through this correspondence, an interesting question on circle patterns turns out to be a special case of Question $W^{*imm}_{\HH^3}$. 

\subsubsection{Convex surfaces in hyperbolic ends and circle packings}
\label{sssc:ends}

One motivation for Question $W^{*imm}_{\HH^3}$ is a beautiful conjecture of Kojima, Mizushima and Tan \cite{KMT,KMT2,KMT3}. They considered circle packings on closed surfaces equipped with a complex projective structure, which are more flexible than circle packings on closed hyperbolic or Euclidean surfaces. Kojima, Mizushima and Tan consider a graph $\Gamma$ that is the 1-skeleton of a triangulation of a closed surface $S$, and the space $\cC_\Gamma$ of complex projective structures on $S$ admitting a circle packing with incidence graph $\Gamma$.

\begin{conjecture}[Kojima, Mizushima and Tan] \label{cj:kmt}
  The forgetful map from $\cC_\Gamma$  to the Teichm\"uller space of $S$ is a homeomorphism.
\end{conjecture}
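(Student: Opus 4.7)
The plan is to view Conjecture \ref{cj:kmt} as a special case of the dual Weyl problem for immersed locally convex pleated surfaces with cusps in $\HH^3$ (Question $W^{* imm}_{\HH^3}$), following the framework developed in this paper. Given $\sigma \in \cC_\Gamma$, lift the circle packing to a $\pi_1(S)$-invariant configuration of circles in $\CP^1$. Each circle $C_v$ indexed by a vertex $v$ of $\Gamma$ bounds a totally geodesic hyperbolic plane $P_v \subset \HH^3$, and when two circles $C_v, C_w$ are tangent at an ideal point $p$, the planes $P_v, P_w$ meet tangentially at $p$. Taking the boundary of the convex hull of $\bigcup_v P_v$ from the convex side produces a $\pi_1(S)$-equivariant locally convex pleated surface $\Sigma_\sigma \subset \HH^3$, with ideal vertices at the tangency points, measured bending lamination supported on (the lift of) the dual $1$-skeleton of $\Gamma$, and induced metric a complete finite-volume hyperbolic metric on $S$ minus the set of tangency vertices. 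The hyperbolic Gauss map applied to the concave side of $\Sigma_\sigma$ recovers the conformal structure $\sigma$. Conjecture \ref{cj:kmt} thereby becomes the statement that every conformal structure on $S$ arises uniquely as the Gauss pull-back of $\sigma$ from such a cusped pleated surface of combinatorics $\Gamma$.

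The next step is to show that the forgetful map is a local diffeomorphism between smooth manifolds of matching real dimension $6g-6$ (with $g$ the genus of $S$). This reduces to an infinitesimal rigidity statement: any first-order deformation of $\Sigma_\sigma$ preserving the combinatorics of $\Gamma$ and the cusp structure at each tangency point, and whose induced conformal structure at infinity is infinitesimally trivial, must itself be trivial. I would attack this via Hodgson--Rivin type Hessian arguments on a dual energy functional, or via Bonahon shear-bend cocycles adapted to the cusped setting. Alternatively, the polar duality with $\dS^3$ recalled in Section \ref{ssc:duality-hyperbolic} reformulates the problem as an infinitesimal rigidity statement for a dual configuration in de Sitter space, which may be more tractable.

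For global bijectivity I would combine this local result with a properness statement and a degree computation. For properness, consider a sequence $\sigma_n \in \cC_\Gamma$ escaping every compact set: since the measured bending lamination of $\Sigma_{\sigma_n}$ is combinatorially fixed and the pleating angles at each edge are bounded by $\pi$, the only way for $\Sigma_{\sigma_n}$ to degenerate is through collapse of the induced hyperbolic metric on the cusped surface, which forces the underlying conformal structure on $S$ to leave compact sets in $\cT_S$. With properness established, the degree of the forgetful map is computed at a Fuchsian base point: the unique circle packing guaranteed by the Koebe--Andreev--Thurston theorem on a hyperbolic surface produces a single preimage there, pinning the degree to one.

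The main obstacle is expected to be the properness step. Ruling out exotic degenerations, where non-adjacent circles collide, a tangency point splits into a cluster, or the pleated surface develops uncontrolled behavior at infinity, requires delicate uniform geometric estimates on the cusped pleated surface as $\sigma$ varies --- precisely the type of control that is missing in related Thurston-type conjectures (Conjectures \ref{cj:thurston-m} and \ref{cj:thurston-l}) in the closed pleated setting. A secondary difficulty lies in the rigidity step: the standard infinitesimal rigidity arguments for smooth or compact polyhedral convex surfaces must be adapted to accommodate both the atomic weights of the bending lamination concentrated on edges ending in cusps and the non-compact cusped geometry at the tangency vertices themselves.
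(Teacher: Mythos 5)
This statement is not proved in the paper: Conjecture \ref{cj:kmt} is recorded there as an \emph{open} conjecture, verified only in special cases (the sphere, where it reduces to the Koebe circle packing theorem, and the torus with a one-circle packing; Lam's recent work handles the torus case of the Delaunay generalization). So there is no proof of the paper's to compare yours against, and your text should be read as a research programme rather than a proof. As a programme it is well aligned with the paper's own framing: the translation of a projective circle packing into an equivariant ideal polyhedral locally convex surface in a hyperbolic end, with the conformal structure recovered by the hyperbolic Gauss map, is exactly the correspondence of Section \ref{sssc:ends}, which identifies the conjecture as an equivariant, cusped special case of Question $W^{*imm}_{\HH^3}$. (One small correction to your construction: for a genuine circle packing one must adjoin, for each interstice, the dual circle orthogonal to the three adjacent ones, so as to obtain a Delaunay circle pattern with all intersection angles $\pi/2$; the surface is then the boundary of the complement of the union of the corresponding half-spaces, not the convex hull of the union of the planes $P_v$, which would be all of $\HH^3$.)

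The genuine gaps are the two steps you yourself flag, and neither is a routine adaptation. First, the infinitesimal rigidity of the cusped equivariant pleated surface with fixed combinatorics and fixed conformal structure at infinity is precisely the part that has resisted all Hodgson--Rivin and shear-bend style arguments; it is the same obstruction that keeps Conjectures \ref{cj:thurston-m} and \ref{cj:thurston-l} open, and no dual-energy functional with the required convexity is currently known in this setting. Second, your assessment of the difficulty is inverted relative to the state of the art reported in the paper: properness of the forgetful map is the ``one half'' that \emph{has} been established (in the reference \cite{delaunay} cited there, and in the Delaunay-pattern generality), so designating properness as the main obstacle and rigidity as secondary misidentifies where the conjecture actually gets stuck. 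Without a proof of local injectivity, the degree argument at the Fuchsian base point cannot be run, and the proposal does not yield the statement.
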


The Koebe circle packing theorem is the special case when $S$ is a sphere, since there is then a unique complex projective structure at infinity. Kojima, Mizushima and Tan verified the conjecture in some other simple cases, in particular on the torus for a circle packing with only one circle.

In \cite{delaunay} we noticed that Conjecture \ref{cj:kmt} can be extended from circle packings to ``Delaunay circle patterns'' -- patterns of circles that occur as the Delaunay decomposition of a finite set of points on a surface equipped with a complex projective structure. We also proved a statement that could be ``one half'' of this generalized conjecture, namely, that the projection map to Teichm\"uller space is proper.

A key point in \cite{delaunay} is that a Delaunay circle pattern on a surface with a complex projective structure can be considered as a polyhedral surface (with all vertices at infinity) in a hyperbolic end. More specifically, Conjecture \ref{cj:kmt} is in fact {\em equivalent} to Question $W^{*imm}_{\HH^3}$ considered in the special case of {\em equivariant} convex immersions of surfaces that are locally like ideal polyhedra.

Let us briefly explain how the correspondence between Delaunay circle patterns (or circle packings) and ``ideal'' polyhedral surfaces functions. A Delaunay circle pattern is basically the sort of circle pattern that occurs when considering the Delaunay decomposition of a discrete set of points on a surface equipped with a complex projective structure (for instance, an open subset of $\CP^1$), see \cite[Definition 1.9]{delaunay}. The points in this discrete set will be the vertices of the ideal polyhedral surface. Given such a Delaunay circle pattern, each empty disk (disk containing no vertex) is the boundary at infinity of a unique half-plane. The boundary of the complement of the union of the half-planes corresponding to all the disks is the ideal polyhedral surface. This construction can be done either in a hyperbolic end, or in its universal cover, in which case both the circle pattern and the corresponding ideal polyhedral surface are invariant under a surface group action.

Circles packings (as considered in \cite{KMT}, that is, when faces of the incidence graph are triangles) can be considered as special cases of Delaunay circle patterns if one adds for each face of the dual graph another circle, orthogonal to the adjacent circles. Adding those ``dual'' circles yields a Delaunay circle pattern for which all intersection angles are $\pi/2$.

Question 1.13 in \cite{delaunay} asks whether given a Delaunay circle pattern $\cC$ on a surface $S$ equipped with a complex projective structure $c\in \cCP_S$, $\cC$ and $c$ are uniquely determined by the incidence graph and intersection angles of $\cC$, together with the complex structure underlying $c$. And whether, given an embedded graph $\Gamma$ in $S$ and a set of weights in $(0,\pi)$ associated to the edges of $\Gamma$, satisfying some natural conditions (basically corresponding to the conditions of Question $W^{*imm}_{\HH^3}$), there is for each complex structure $X$ on $S$ a unique complex projective structure $c$ compatible with $X$, admitting a circle pattern with incidence graph $\Gamma$ and intersection angles given by the prescribed weights. A positive answer was recently given by W. Y. Lam for the torus, see \cite{lam2019quadratic,lam2024space}.

\subsubsection{Infinite circle patterns and circle packings}

The point of view of Question $W^{*imm}_{\HH^3}$ suggests an extension of Conjecture \ref{cj:kmt} to infinite circle patterns or infinite circle packings. We state the question here in a relatively limited setting for simplicity.

\begin{definition}
  A Delaunay circle pattern in the hyperbolic disk $\DD^2$ is {\em bounded} if it covers the whole of $\DD$ and the hyperbolic radius of the circles is bounded from above and is bounded from below by a positive constant.
\end{definition}

\begin{question} \label{q:delaunay-infinite}
  Let $\cC$ be a bounded Delaunay circle pattern in the hyperbolic disk $\DD$, and let $\sigma:\RP^1\to \RP^1$ be a quasi-symmetric homeomorphism. Is there a unique local homeomorphism $u:\DD\to \CP^1$ sending each circle to a round circle, preserving the intersection angles, and such that if $v:\DD\to (\DD,u^*c_0)$ is the Riemann uniformization map, then $\partial v:\RP^1\to \RP^1$ is equal (up to M\"obius transformation) to $\sigma$?
\end{question}

Again, this statement is a special case of Question $W^{*imm}_{\HH^3}$. The third fundamental form becomes, for surfaces which are locally isometric to the boundary of an ideal polyhedron, a measure transverse to the edges, with weight the exterior dihedral angle, so prescribing $\III$ in this case means prescribing the combinatorics and exterior dihedral angles of an ideal polyhedral surface, which means prescribing the combinatorics and intersection angles of a Delaunay circle pattern. 

\subsection{Parameterization of locally convex equivariant embeddings}

We believe that a positive answer to Question $W^{imm}_{\HH^3}$, and the dual Question $W^{*imm}_{\HH^3}$, would provide a general description of equivariant convex isometric embeddings of surfaces of higher genus in $\HH^3$.

Let $(S,g)$ be a closed surface of genus at least $2$, equipped with a Riemannian metric of curvature $K>-1$. If we consider the Alexandrov Theorem \ref{tm:alexH} as a statement on locally convex isometric embeddings, it is quite natural to ask for a description of the space of all isometric immersions of the universal cover $(\tilde S, \tilde h)$ in $\HH^3$ equivariant under a representation $\rho$ of $\pi_1(S)$ into $PSL(2,\C)$.

Section \ref{ssc:fuchsian} already provides us with one canonical such immersion: the one for which $\rho$ takes values in $PSL(2,\R)$. A positive answer to Question $W^{imm}_{\HH^3}$ for smooth surfaces would provide a natural parameterization of the space of equivariant isometric immersions of a closed surface $S$ equipped with a metric $h$ of curvature $K>-1$ in $\HH^3$, with the natural parameter being the pull-back on $S$ by the Gauss map of the conformal class at infinity of $\HH^3$.

\subsection{Similar statements for $K$-surfaces} \label{ssc:similar}

There are several statements concerning immersed, locally convex surfaces that can be proved, in the special case of surfaces of constant curvature, using recent results on the existence and uniqueness of an extension to the hyperbolic disk of a quasi-symmetric homeomorphism from $\RP^1$ to $\RP^1$. We collect those statements and their proofs in this section.

\subsubsection{Statements}

The first statement involves the gluing map at infinity between the first and third fundamental for on an immersed $K$-surface.

\begin{theorem} \label{tm:I-III}
  Let $K\in (-1,0)$, and let $\sigma:\RP^1\to \RP^1$ be a quasi-symmetric homeomorphism. There exists a unique immersion $u:\DD\to \HH^3$ with bounded principal curvatures such that the induced metric has constant curvature $K$ and that the gluing map at infinity between $I$ and $\III$ is equal to $\sigma$.
\end{theorem}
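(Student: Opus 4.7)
The plan is to identify the immersions in the theorem with quasi-conformal minimal Lagrangian self-diffeomorphisms of the hyperbolic disk $\DD$, and then invoke the known existence and uniqueness of minimal Lagrangian extensions of quasi-symmetric homeomorphisms.

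Given an immersion $u:\DD\to\HH^3$ with induced metric $I$ of constant curvature $K\in(-1,0)$ and shape operator $B$, Gauss's equation in $\HH^3$ gives $\det B=1+K$. Set $h_1:=-K\cdot I$ and $h_2:=-K/(1+K)\cdot\III$. A direct computation based on $\III=I(B\cdot,B\cdot)$ and on the identity $K_{\III}=K/(1+K)$ (itself a consequence of Gauss's equation applied to the dual spacelike surface in $\dS^3$) shows that both $h_1$ and $h_2$ are hyperbolic metrics on $\DD$, and that $h_2=h_1(\tilde B\cdot,\tilde B\cdot)$ with $\tilde B:=B/\sqrt{1+K}$ positive self-adjoint of determinant $1$. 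Since the Levi-Civita connection is unchanged under constant rescalings of the metric, the Codazzi equation for $B$ with respect to $I$ translates into the Codazzi equation for $\tilde B$ with respect to $h_1$, which is precisely the condition that the identity $\iota:(\DD,h_1)\to(\DD,h_2)$ is minimal Lagrangian. Boundedness of the principal curvatures of $u$ corresponds to bounded eigenvalues of $\tilde B$, making $\iota$ quasi-conformal.

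Next, since $h_1,h_2$ are conformally equivalent to $I,\III$, their Riemann uniformizations $U_0,U_1:\DD\to\DD$ coincide with those used to define the gluing map at infinity $\sigma$, so the composition $f:=U_1^{-1}\circ\iota\circ U_0$ is a quasi-conformal minimal Lagrangian self-diffeomorphism of $(\DD,h_\DD)$ whose boundary extension on $\RP^1$ equals $\sigma$ (modulo the $\PSL(2,\R)\times\PSL(2,\R)$-action). Existence and uniqueness of $u$ with prescribed $\sigma$ therefore reduce to the existence and uniqueness of a quasi-conformal minimal Lagrangian extension of $\sigma$ to $\DD$. I would appeal to the known equivalence between such extensions and maximal spacelike surfaces in $\AdS^3$ with ideal boundary the graph of $\sigma$, both of which exist and are unique for any quasi-symmetric $\sigma$ (Bonsante--Schlenker and subsequent work). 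To close the argument I would run the construction in reverse: given $f:\DD\to\DD$ quasi-conformal minimal Lagrangian with boundary values $\sigma$, let $\tilde B$ be the positive square root of $h_\DD^{-1}f^*h_\DD$, and set $I=h_\DD/(-K)$ and $B=\sqrt{1+K}\,\tilde B$. Then $\det B=1+K$ (Gauss) and $B$ is Codazzi for $\nabla_I$ (from the minimal Lagrangian condition), so by the fundamental theorem of isometric immersions there exists an immersion $u:(\DD,I)\to\HH^3$ with shape operator $B$, unique up to ambient isometry; one checks that the gluing map of $u$ is $\sigma$.

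The main obstacle is the uniqueness statement for the minimal Lagrangian extension in the universal (non-equivariant) setting: while existence is by now established through the maximal surface interpretation, uniqueness requires a maximum-principle or convexity argument at infinity which uses quasi-conformality in an essential way to control the asymptotic behaviour of competing solutions. A secondary technical point is verifying that the boundary extension of $f$ really coincides with the gluing map defined via Riemann uniformizations of two distinct hyperbolic metrics on the same disk; this follows from the fact that the Riemann uniformizations of the hyperbolic metrics $h_1$ and $h_2$ are isometries for the standard hyperbolic disk, hence M\"obius on the boundary, so that all boundary data agree up to the ambiguity allowed in the definition of $\sigma$.
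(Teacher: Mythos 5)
Your proposal is correct and follows essentially the same route as the paper: both identify constant-curvature-$K$ immersions with quasi-conformal minimal Lagrangian self-maps of the hyperbolic disk via Labourie's observation that the identity between $|K|I$ and $|K^*|\III$ (with $K^*=K/(K+1)$) is minimal Lagrangian, reverse the construction through the Gauss--Codazzi equations with $B=\sqrt{K+1}\,b$, and then invoke the existence and uniqueness of quasi-conformal minimal Lagrangian extensions of quasi-symmetric homeomorphisms. The uniqueness you flag as the ``main obstacle'' is not actually a gap: it is part of the cited result (Theorem~1.4 of the Bonsante--Schlenker work on maximal surfaces and the universal Teichm\"uller space), which establishes both existence and uniqueness of the quasi-conformal minimal Lagrangian extension.
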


A pair of similar statements deal with the gluing map at infinity between the induced metric (resp. third fundamental form) and a combination of $I$ and $\III$.

\begin{theorem} \label{tm:I+III}
  Let $K\in (-1,0)$, and let $\sigma:\RP^1\to \RP^1$ be a quasi-symmetric homeomorphism. There exists a unique immersion $u:\DD\to \HH^3$ with bounded principal curvatures such that the induced metric has constant curvature $K$ and that the gluing map at infinity between $I$ and $[(1+K)I+\III]$ is equal to $\sigma$.
\end{theorem}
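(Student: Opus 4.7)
The plan is to reduce Theorem \ref{tm:I+III} to a statement about the gluing map at infinity between $I$ and $II$, via a pointwise algebraic identity, and then to apply the correspondence between $K$-surfaces in $\HH^3$ and minimal Lagrangian diffeomorphisms.

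First I would establish the following algebraic identity. Let $u:\DD\to\HH^3$ be an immersion with induced metric of constant curvature $K\in(-1,0)$ and bounded principal curvatures, and let $B$ be its shape operator. The Gauss equation in $\HH^3$ yields $\det(B) = 1+K > 0$, and the combination $K > -1$ plus strict convexity forces $B$ to be positive definite, in particular $\tr(B) > 0$. Applying Cayley--Hamilton to the $2\times 2$ operator $B$ gives $B^2 = \tr(B) B - \det(B)\,\mathrm{Id}$; translating to symmetric tensors using $\III(v,w)=I(B^2 v,w)$ and $II(v,w)=I(Bv,w)$, one obtains
\[
(1+K)\,I + \III \;=\; \tr(B) \cdot II.
\]
Since $\tr(B) > 0$, this shows that $(1+K)I + \III$ and $II$ are in the same conformal class pointwise. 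Hence the gluing map at infinity between $I$ and $[(1+K)I + \III]$ coincides with the gluing map at infinity between $I$ and $II$, and the theorem reduces to prescribing the latter.

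Next I would invoke the well-developed correspondence between $K$-surfaces in $\HH^3$ (with $K \in (-1,0)$ and bounded principal curvatures) and minimal Lagrangian quasi-conformal diffeomorphisms of hyperbolic disks, originating in the work of Labourie and Krasnov--Schlenker and extended to the universal Teichm\"uller setting by Bonsante--Seppi. In this framework, $h_L := (-K)\, I$ is a hyperbolic metric on $\DD$, the conformal class of $II$ carries a unique hyperbolic representative $h_R$, and the identity $(\DD, h_L) \to (\DD, h_R)$ is minimal Lagrangian. Under the standard normalizations, the gluing map at infinity between $I$ and $II$ is precisely the boundary extension of this minimal Lagrangian diffeomorphism. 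Existence and uniqueness in Theorem \ref{tm:I+III} will then follow from the result that every quasi-symmetric homeomorphism $\sigma:\RP^1\to\RP^1$ is the boundary extension of a unique minimal Lagrangian quasi-conformal diffeomorphism of $\HH^2$, combined with the reconstruction of the $K$-surface from its minimal Lagrangian data.

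The hardest step will be matching on the nose the gluing map at infinity, defined through Riemann uniformization of $(S, I)$ and $(S, II)$, with the boundary extension of the minimal Lagrangian map, modulo M\"obius transformations on either side. A closely related subtlety is verifying that the bounded principal curvatures hypothesis for the $K$-surface corresponds exactly to the quasi-conformality of its associated minimal Lagrangian map, so that the two moduli spaces are identified bijectively, and that uniqueness up to ambient isometries of $\HH^3$ matches the ambiguity in reconstructing a $K$-surface from its minimal Lagrangian data. Once these identifications are made carefully, the bijectivity of the map $\sigma \mapsto u$ should follow.
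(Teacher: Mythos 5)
Your first step is correct and is a nice way to see what the conformal class $[(1+K)I+\III]$ is: by Cayley--Hamilton and the Gauss equation $\det B=1+K$, one gets $(1+K)I+\III=\tr(B)\,\II$, so the class in question is $[\II]$. (The paper reaches the same class by a different normalization: writing $h=|K|I$, $h'=|K^*|\III$ with $K^*=K/(K+1)$, one has $[h+h']=[(1+K)I+\III]=[\II]$.) The problem is in your second step, where you assert that the gluing map at infinity between $I$ and $[\II]$ is the boundary extension of the minimal Lagrangian diffeomorphism associated to the $K$-surface. That is not the case. The minimal Lagrangian map attached to the surface is the identity between $(\DD,|K|I)$ and $(\DD,|K^*|\III)$, so its boundary value is the gluing map between $I$ and $\III$ --- that is exactly Theorem \ref{tm:I-III}, which is proved by the route you describe, via the unique quasi-conformal minimal Lagrangian extension of \cite{maximal}. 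The conformal class $[\II]=[h+h']$ is not $[\III]$ (they agree only at umbilic points), and the map you actually need to control is the identity from the uniformization of $[h+h']$ to $(\DD,h)$, which is a \emph{harmonic} map (with Hopf differential opposite to that of the harmonic map to $h'$), not a minimal Lagrangian one. If you run your argument as written, the surface you reconstruct from the minimal Lagrangian extension of $\sigma$ will have gluing map $\sigma$ between $I$ and $\III$, but some other gluing map between $I$ and $[(1+K)I+\III]$, so both existence and uniqueness for Theorem \ref{tm:I+III} fail to follow.

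The missing ingredient is therefore the harmonic, rather than minimal Lagrangian, extension theorem: every quasi-symmetric homeomorphism of $\RP^1$ is the boundary value of a unique quasi-conformal harmonic diffeomorphism of $\HH^2$ (the Schoen conjecture, existence due to Markovic, uniqueness to Li--Tam). This is what the paper uses: it shows that for a $K$-surface the identity from $(\DD,(1+K)I+\III)$ to $(\DD,|K|I)$ is harmonic, and conversely that a quasi-conformal harmonic diffeomorphism onto a hyperbolic disk determines, via the hyperbolic metric with opposite Hopf differential (obtained by integrating the Schwarzian equation) and the minimal Lagrangian pair $(h,h')$ it produces, a unique $K$-surface with the prescribed gluing map between $I$ and $[(1+K)I+\III]$. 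Your Cayley--Hamilton reduction to $[\II]$ can be kept as the first step, but the analytic engine must be the harmonic extension result, not the minimal Lagrangian one.
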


\begin{theorem} \label{tm:I+III*}
  Let $K\in (-1,0)$, and let $\sigma:\RP^1\to \RP^1$ be a quasi-symmetric homeomorphism. There exists a unique immersion $u:\DD\to \HH^3$ with bounded principal curvatures such that the induced metric has constant curvature $K$ and that the gluing map at infinity between $\III$ and $[(1+K)I+\III]$ is equal to $\sigma$.
\end{theorem}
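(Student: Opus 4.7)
The strategy is to reduce Theorem \ref{tm:I+III*} to Theorem \ref{tm:I+III} by combining a pointwise algebraic identity with the polar duality between $\HH^3$ and the de Sitter space $\dS^3$. First, on any $K$-surface with $K\in(-1,0)$, a direct computation in a $B$-diagonalizing orthonormal frame for $I$, using the Gauss equation $\det B = 1+K$, gives the pointwise identity
\[
(1+K)I+\III \;=\; H\cdot\II, \qquad H = \tr B = \lambda_1+\lambda_2,
\]
so $[(1+K)I+\III] = [\II]$ as conformal classes. Convexity forces $\lambda_i > 0$, hence $H\geq 2\sqrt{1+K} > 0$, and the bounded-principal-curvatures hypothesis makes $H$ bounded from above. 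Thus $I,\II,\III$ and $(1+K)I+\III$ are pairwise bi-Lipschitz equivalent, so quasi-symmetry of the relevant gluing map is automatic, and Theorem \ref{tm:I+III*} is reduced to the prescription of the gluing at infinity between $[\III]$ and $[\II]$.

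Next, I invoke the polar duality $S\mapsto S^*$ from smooth strictly convex surfaces in $\HH^3$ to spacelike surfaces in $\dS^3$: one has $I^* = \III$, $\II^* = \II$, $\III^* = I$, $B^* = B^{-1}$, and by the de Sitter Gauss equation $K^* = 1-\det B^*$ the dual has constant Gauss curvature $K^* = K/(1+K) < 0$. Applying the computation of the first step to $S^*$, where now $\det B^* = 1-K^*$, yields
\[
(1-K^*)I^*+\III^* \;=\; H^*\cdot\II^*,
\]
hence $[\II^*] = [(1-K^*)I^*+\III^*]$. Under the duality, the target gluing map on $S$ between $[\III]$ and $[(1+K)I+\III]$ is therefore identified with the gluing map on $S^*$ between $[I^*]$ and $[(1-K^*)I^*+\III^*]$, which is exactly a de Sitter analogue of Theorem \ref{tm:I+III} for $K^*$-surfaces in $\dS^3$.

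The remaining analytical content is the transplantation of the proof of Theorem \ref{tm:I+III} to $\dS^3$: existence through a continuity method coupled with a priori $C^{2,\alpha}$ estimates, and uniqueness through a maximum principle applied to the Monge-Amp\`ere-type equation for the support function. Since the induced geometry on $S^*$ is Riemannian of constant negative curvature, the Lorentzian signature of the ambient space enters only through a sign in the Gauss equation, so the argument carries over essentially verbatim. The main obstacle is to verify that the bounded-principal-curvatures hypothesis passes cleanly to $S^*$ (it does, as $B^* = B^{-1}$ keeps the dual principal curvatures in a compact subinterval of $(0,+\infty)$) and that the compactness of the associated moduli spaces is preserved under the change of ambient geometry. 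Alternatively, one may prove Theorem \ref{tm:I+III*} directly in $\HH^3$ by substituting the pair $(\III, (1+K)I+\III)$ for $(I, (1+K)I+\III)$ throughout the proof of Theorem \ref{tm:I+III}: the bi-Lipschitz equivalence established in the first step supplies all of the a priori estimates needed to transfer both the existence and the uniqueness arguments.
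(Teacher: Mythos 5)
Your pointwise identity $(1+K)I+\III = H\cdot \II$ is correct (with $\lambda_1\lambda_2=\det B=1+K$ from the Gauss equation, one gets $\operatorname{diag}(\lambda_1\lambda_2+\lambda_i^2)=(\lambda_1+\lambda_2)\operatorname{diag}(\lambda_i)$), and it is a nice observation that $[(1+K)I+\III]=[\II]$; the bi-Lipschitz remark is also fine. But the proof has a genuine gap at its core: all of the existence and uniqueness is deferred to a ``de Sitter analogue of Theorem \ref{tm:I+III}'' that you never prove, and the proof strategy you sketch for it --- continuity method with $C^{2,\alpha}$ estimates for existence, and a maximum principle for a Monge--Amp\`ere equation for uniqueness --- is not how Theorem \ref{tm:I+III} is actually proved, and would not ``carry over essentially verbatim'': for complete non-compact surfaces with prescribed quasi-symmetric data at infinity there is no compactness, and uniqueness is precisely the part that a naive maximum-principle argument cannot reach. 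Your alternative suggestion (``substitute $(\III,(1+K)I+\III)$ for $(I,(1+K)I+\III)$ throughout the proof of Theorem \ref{tm:I+III}'') points in the right direction but is empty without identifying what that proof is.

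The actual mechanism is the following. Writing $h=|K|I$ and $h'=|K^*|\III$ with $K^*=K/(K+1)$, these are two hyperbolic metrics on $\DD$ with the identity minimal Lagrangian between them (Labourie), and the identity maps from $(\DD,[h+h'])$ to $(\DD,h)$ and to $(\DD,h')$ are \emph{harmonic} with opposite Hopf differentials; since $[h+h']=[(1+K)I+\III]$, the gluing map between $[(1+K)I+\III]$ and $\III$ is the boundary value of a quasi-conformal harmonic diffeomorphism onto $(\DD,h')$. Theorem \ref{tm:I+III*} then follows from two external inputs: the existence and uniqueness of a quasi-conformal harmonic diffeomorphism of $\HH^2$ with prescribed quasi-symmetric boundary value (the solution of the Schoen conjecture), and the reconstruction of the minimal Lagrangian partner $h$ from $h'$ by integrating the Schwarzian/Hopf-differential equation, which yields the $K$-surface with $I=(1/|K|)h$ and $\III=(1/|K^*|)h'$. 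The difference between Theorems \ref{tm:I+III} and \ref{tm:I+III*} is only whether one uses the harmonic map to $(\DD,h)$ or to $(\DD,h')$. None of this deep input (in particular the uniqueness of the quasi-conformal harmonic extension, which carries the entire uniqueness statement) appears in your argument, so the proposal as written does not establish the theorem.
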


Note by comparison that Question $W^{imm}_{\HH^3}$ and Question $W^{*imm}_{\HH^3}$ can be stated in the same way as Theorem \ref{tm:I+III} and Theorem \ref{tm:I+III*} but with $[(1+K)I+\III]$ replaced by $[I+2\II+\III]$, where $\II$ denotes the second fundamental form, since this conformal class is the pull-back by the Gauss map of the conformal class on $\partial_\infty\HH^3$.

\subsubsection{Parameterization by the gluing between $I$ and $\III$}

The proof of Theorem \ref{tm:I-III} relies on the fact (observed by Labourie \cite{L5}) that if $S\subset \HH^3$ is a surface of constant curvature $K>-1$, then the identity is a minimal Lagrangian map between $(S,|K|I)$ and $(S,|K^*|\III)$, where $K^*=K/(K+1)$ is the curvature of $\III$. %%\footnote{NB check that the principal curvatures are always bounded.}

Conversely, if $h,h'$ are two hyperbolic metrics on a surface $S$ such that the identity is minimal Lagrangian between $(S,h)$ and $(S,h')$, and if $K\in (-1,0)$ is a constant, then there is a unique immersion of $S$ into $\HH^3$ such that the induced metric is $(1/|K|)h$ and the third fundamental form is $(1/|K^*|)h'$. Indeed, the fact that the identity is minimal Lagrangian between $(S,h)$ and $(S,h')$ is equivalent to the existence of a bundle morphism $b:TS\to TS$ which is self-adjoint and Codazzi for $h$, of determinant $1$, and such that $h'=h(b\cdot,b\cdot)$. One can then set
$$ B=\sqrt{K+1} b~. $$
Then $B$ is self-adjoint and Codazzi for $(1/|K|)h$, and of determinant $K+1$, so that $(h,B)$ satisfy the Gauss-Codazzi equations. 

Thanks to this correspondence, immersed $K$-surfaces with bounded principal curvatures such that the gluing map at infinity between $I$ and $\III$ is $\sigma$ are in one-to-one correspondence with quasi-conformal minimal Lagrangian maps from $\HH^2$ to $\HH^2$ which extend at infinity as $\sigma$. Thanks to \cite[Theorem 1.4]{maximal}, there is a unique such quasi-conformal minimal Lagrangian diffeomorphism, and Theorem \ref{tm:I-III} follows.

\subsubsection{Parameterization by the gluing between $I$ and $\III$}

The same argument can be used to prove Theorem \ref{tm:I+III} and Theorem \ref{tm:I+III*}, based on the fact that if $h$ and $h'$ are two hyperbolic metrics on a surface $S$ such that the identity is a minimal Lagrangian map between $(S,h)$ and $(S,h')$, then the identity maps $Id:(S,h+h')\to (S,h)$ and $Id:(S,h+h')\to (S,h')$ are harmonic maps, with opposite Hopf differential, and conversely, see e.g. \cite{maximal}.

Suppose now that $u:\DD\to \HH^3$ is an immersion with induced metric $I$ of constant curvature $K\in (-1,0)$, and let $K^*=K/(K+1)$. Then the metrics $|K|I$ and $|K^*|\III$ are hyperbolic, and the identity is minimal Lagrangian between them. So the identity map from $(S,|K|I+|K^*|\III)$ to $(S,|K|I)$ is harmonic. Scaling by a factor $1/|K|$ on both side and using that $K^*=K/(K+1)$, we find that the identity between $(S, I+(1/(1+K))\III)$ and $(S,I)$ is harmonic. In other terms (since harmonicity only depends on the conformal class in the source), the identity between $(S,(1+K)I+\III)$ and $(S,I)$ is harmonic.

Conversely, given a hyperbolic metric $h$ on the disk $\DD$ such that the identity between $\DD$ (equipped with its standard conformal class) and $h$ is harmonic, there is a unique hyperbolic metric $h'$ on $\DD$ (obtained by integrating the Schwarzian equation, see \cite{wolf:teichmuller}) such that the identity map from $\DD$ to $(\DD,h')$ is harmonic, with Hopf differential opposite to that of the harmonic map from $\DD$ to $(\DD, h)$. The identity map from $(\DD,h)$ to $(\DD,h')$ is then minimal Lagrangian. Given $K\in (-1,0)$, the construction of the previous section then associates to the pair $(h,h')$ an immersion from $\DD$ to $\HH^3$ with induced metric $(1/|K|)h$ and third fundamental form $(1/|K^*|)h'$, and by construction the identity is harmonic from $(\DD, (1+K)I+\III)$.

This construction shows that there is a one-to-one correspondence between immersed $K$-surfaces with bounded principal curvature and quasi-conformal harmonic diffeomorphism between $\DD$ and $\HH^2$. Theorem \ref{tm:I+III} follows.

Theorem \ref{tm:I+III*} is proved in the same manner, using the fact that the identity from $(\DD, [h+h'])$ to $(\DD,h')$ is also harmonic, and that it can be used to reconstruct a $K$-surface as above. 

 %% Immersed convex surfaces

\section{The Weyl problem for unbounded surfaces in anti-de Sitter}
\label{sc:ads}

\subsection{Basic information on AdS geometry}
\label{ssc:ads-basics}

We review very briefly in this section some basic information on anti-de Sitter (AdS) geometry. We refer the reader to e.g. \cite{bonsante2020antide} for more details.%%\footnote{add references?}

\subsubsection{The anti-de Sitter space}
Anti-de Sitter (AdS) geometry is a Lorentzian cousin of hyperbolic geometry. The space $\AdS^3$ is a Lorentzian space of constant curvature $-1$, originally introduced by physicists as a cosmological model. It can be defined as a quadric in the 4-dimensional Minkowski space, equipped with the induced metric:
$$ \AdS^3 = \{ x\in \R^{2,2}~|~ \langle x,x\rangle=-1\}~. $$
Its fundamental group is $\Z$.
$\AdS^3$ admits a projective model, similar to the Klein model for $\HH^3$. This model represents a ``hemisphere'' of $\AdS^3$ as the interior of a one-sheeted hyperboloid in $\R^3$, with geodesics of $\AdS^3$ corresponding to line segments. Space-light geodesics in $\AdS^3$ correspond to lines in $\R^3$ which intersect the boundary hyperboloid at two points, light-like geodesics corresponds to line intersecting the hyperboloid tangentially in one point, while time-like geodesics corresponds to lines in the interior of the hyperboloid but not intersecting it. A full projective model of $\AdS^3$ can be obtained by taking a double cover of $\RP^3$, and thus in the sphere $S^3$, as the interior of a quadric of signature $(1,1)$.

$\AdS^3$ is naturally equipped with a boundary, which can be seen in the projective model of $\AdS^3$ in $S^3$. This boundary is endowed with a conformal Lorentzian structure, analog to the conformal Riemannian metric on the ideal boundary of $\HH^3$.

It follows by its definition above that the isometry group of $\AdS^3$ is $O(2,2)$. Its identity component, $SO_0(2,2)$, can be identified with $PSL(2,\R)\times PSL(2,\R)$, and $\AdS^3$ can in be fact identified isometrically with $PSL(2,\R)$ equipped with its Killing form.

\subsubsection{Globally hyperbolic anti-de Sitter spacetimes}

There is a physically relevant notion of non-complete AdS spacetimes, namely, those which are {\em globally hyperbolic compact}: they contain a closed Cauchy surface, which we always assume to be of genus at least $2$. We say that a manifolds $M$ is {\em globally hyperbolic maximal compact} (GHMC) if it is globally hyperbolic compact, and maximal (in the sense of inclusion) under this condition, that is, any isometric embedding of $M$ in a globally hyperbolic compact space is in fact an isometry.

Mess \cite{mess,mess-notes} discovered striking analogies between the geometric properties of 3-dimensional Globally Hyperbolic Maximal Compact (GHMC) anti-de Sitter spacetimes, and those of quasifuchsian hyperbolic manifolds. For this reason, GHMC AdS spacetimes are now often called ``quasifuchsian AdS spacetimes'', and we will follow this convention here.

Mess \cite{mess} proved that when $M$ is a quasifuchsian AdS spacetime with Cauchy surface a closed surface $S$, its holonomy representation $\rho:\pi_1(S)\to SO_0(2,2)$ can be written, in the decomposition of $SO_0(2,2)$ as $PSL(2,\R)\times PSL(2,\R)$, as $\rho=(\rho_L, \rho_R)$, where $\rho_L,\rho_R:\pi_1(S)\to PSL(2,\R)$ have maximal Euler number, and are therefore (by a result of Goldman \cite{goldman:topological}) holonomy representations of hyperbolic metrics on $S$, which can be called the left and right hyperbolic metrics of $M$.

He also discovered an analog for quasifuchsian AdS spacetimes of the Bers Simultaneous Uniformization Theorem: given two hyperbolic metrics on a surface, there is a unique quasifuchsian AdS spacetime having them as left and right metrics.

\subsubsection{The left and right metrics on a space-like surface}

A different perspective is given in \cite{minsurf} on those left and right hyperbolic metrics. Let $S$ be a $C^2$ space-like surface in $\AdS^3$, one can define two metrics on $S$ as:
$$ I_L = I((E+ JB)\cdot, (E+ JB)\cdot)~,~~I_R = I((E-JB)\cdot, (E-JB)\cdot)~  $$
where $E:TS\to TS$ is the identity, $B$ is the shape operator, and $J$ is the complex structure of the induced metric. It can be checked by a simple computation using the Gauss formula in $\AdS^3$ (see \cite[Lemma 3.15]{minsurf}) that $I_L$ and $I_R$ are hyperbolic metrics as soon as $S$ has induced metric of negative curvature.

Those left and right metrics can be defined using a notion of ``left'' and ``right'' projection from a space-like surface $S$ on a fixed totally geodesic plane $P_0$ in $\AdS^3$. Those left and right projections can be defined using the two foliations of the boundary quadric $\partial \AdS^3$ by families of lines, which we will call the left and right foliations. Suppose first that $S$ is replaced by a totally geodesic plane $P$. Following the lines of the left (resp. right) foliation defines a homeomorphism between $\partial P$ and $\partial P_0$, which can be shown to be a projective transformation, i.e. the boundary value of an isometry from $P$ to $P_0$, and this isometry is then the left (resp. right) projection from $P$ to $P_0$, we denote it by $\pi_{L,P}$ (resp. $\pi_{R,P}$). Now coming back to the general case where $S$ is a space-like surface, for each $x\in S$, we define
$$ \pi_L(x):=\pi_{L,P(x)}(x)~,~~\pi_R(x):=\pi_{R,P(x)}(x)~, $$
where $P(x)$ is the totally geodesic plane tangent to $S$ at $x$. If the induced metric has negative curvature, then the pull-back by the left (resp. right) projections of the induced metric on $P_0$ then turns out to be $I_L$ (resp. $I_R$), see \cite{minsurf}.

\subsubsection{Earthquakes and landslides}
\label{sssc:earthquakes}

The properties of the left and right projection above take a special (and interesting) form in two special cases: for locally convex pleated surfaces, and for $K$-surfaces, for $K\in (-\infty, -1)$.

For locally convex pleated surfaces, Mess noticed that the left and right projections are {\em earthquakes} (see e.g. \cite{thurston-earthquakes}). Moreover (see \cite[Lemma 7.7]{convexhull}) if $S$ is a complete locally convex pleated surface in $\AdS^3$, then the left (resp. right) projection $\Pi_L$ (resp. $\Pi_R$) is an earthquake map, which extends the left (resp. right) projection of $\partial S$ to $\partial P_0$.

A similar situation holds for $K$-surfaces, for $K\in (-\infty, -1)$, with earthquakes replaced by {\em landslides}, as introduced and studied in \cite{cyclic,cyclic2}. Landslides can be considered as smoother versions of earthquakes, which share most of their key properties. A $\theta$-landslide between two hyperbolic $(S,h)$ and $(S', h')$ can be defined as a diffeomorphism $u:S\to S'$ such that $u^*h'=h(b\cdot, b\dot)$, where $b:TS\to TS$ is Codazzi, $\det(b)=1$, $\tr(b)=2\cos(\theta/2)$, and $\tr(Jb)<0$. Equivalently it can be defined by the condition that
$$ h' = h((\cos(\theta/2)E+\sin(\theta/2)J\bar b)\cdot, (\cos(\theta/2)E+\sin(\theta/2)J\bar b)\cdot)~, $$
where $\bar b$ is Codazzi, self-adjoint for $h$, and of determinant $1$. (The equivalence between the two definitions can be proved by setting $b=\cos(\theta/2)E+\sin(\theta/2)J\bar b$.)

Let now $S\subset \AdS^3$ be a $K$-surface, for $K\in (-\infty,-1)$. Let $\theta\in (0, \pi)$ be such that $K=-1/\cos^2(\theta/2)$. Then $\frac 1{\cos^2(\theta/2)}I$ is a hyperbolic metric on $S$. Moreover, by the Gauss formula in $\AdS^3$,
$$ K=-1-\det(B)~, $$
and therefore 
$$ \det(B)=-K-1 = \frac 1{\cos^2(\theta/2)}-1=\tan^2(\theta/2)~, $$
and $B$ can be written as $B=\tan(\theta/2)\bar b$, where $\bar b$ is Codazzi, self-adjoint for $I$, and has determinant $1$.

Therefore the left metric can be written as
\begin{eqnarray*}
  I_L & = & I((E+JB)\cdot, (E+JB)\cdot) \\
      & = & I((E+\tan(\theta/2)J\bar b)\cdot, (E+\tan(\theta/2)J\bar b)\cdot) \\
  & = & \frac 1{\cos^2(\theta/2)} I((\cos(\theta/2)E+\sin(\theta/2)J\bar b)\cdot,(\cos(\theta/2)E+\sin(\theta/2)J\bar b)\cdot)~.
\end{eqnarray*}
As a consequence, the identity between the hyperbolic metrics $(1/\cos^2(\theta))I$ and $I_L$ is a $\theta$-landslide. Similarly, the identity map between $(1/\cos^2(\theta))I$ and $I_R$ is a $(-\theta)$-landslide. If $S$ has bounded principal curvatures, then this landslide diffeomorphism is quasi-conformal. 

Conversely, a quasi-conformal $\theta$-landslide between two copies of the hyperbolic plane defines in this manner a $K$-surface in $\AdS^3$. 

The same construction works in the limit $K=-1$, that is, for locally convex pleated surfaces in $\AdS^3$. The corresponding map is then an earthquake, and earthquakes can in this manner be considered as limit cases of landslides, see \cite{cyclic,cyclic2}. An earthquake map from $\HH^2$ to $\HH^2$ is associated to any locally convex complete pleated surface in $\AdS^3$ with bounded measured pleating lamination, see \cite{mess}.

\subsubsection{Quasicircles in $\partial \AdS^3$}

It was already mentioned above that the ``ideal'' boundary of $\AdS^3$ can be identified with a quadric of signature $(1,1)$ in $S^3$, itself identified with the double cover of $\RP^3$. This quadric has a canonical decomoposition as $\RP^1\times \RP^1$, such that for $x\in \RP^1$, $\{ x\}\times \RP^1$ and $\RP^1\times \{ x\}$ are lines in $S^3$.

Moreover, $\partial \AdS^3$ can be equipped with a conformal Lorentzian metric, such that the light-like lines are also lines in $S^3$. The boundaries of complete space-like surfaces are acausal meridians, in the sense that they are limits of graphs of homeomorphisms from $\RP^1\to \RP^1$.

Among those acausal meridians, those that are graphs of quasi-symmetric homeomorphisms play a particular roles. They are often call {\em quasi-circles} in $\partial \AdS^3$, and appear to play a similar role in AdS geometry as the ``usual'' quasi-circles in $\CP^1$ play in hyperbolic geometry, with some limited differences (see \cite{width} for such a difference).

\subsubsection{Duality}
\label{sssc:duality}

The polar duality between hyperbolic and de Sitter space, already recalled in Section \ref{ssc:duality-hyperbolic}, also appears for the anti-de Sitter space. However the duality is with $\AdS^3$ itself. We recall briefly its definition and main properties here.

Let $x\in \AdS^3$. It can be considered as point in $\R^{2,2}$, let $x^{\perp}$ be its oriented orthogonal hyperplane. Since $\langle x,x\rangle=-1$ by definition of $\AdS^3\subset \R^{2,2}$, $x^\perp$ is of signature $(2,1)$, so its intersection with $\AdS^3$ is a totally geodesic oriented space-like plane, which we denote by $x^*$.

Given an oriented space-like surface $\Sigma\subset \AdS^3$, we can define its dual as the set $\Sigma^*$ of points dual to the tangent planes of $\Sigma$. If $\Sigma$ is oriented and strictly convex, then $\Sigma^*$ is also space-like, smooth and strictly convex, and $(\Sigma^*)^*=\Sigma$.

As in $\HH^3$, the polar duality exchanges the induced metric and third fundamental form: the induced metric on $\Sigma$ corresponds under the duality to the third fundamental form on $\Sigma^*$, and conversely. 

One point that can be noted is that the left and right metrics on a space-like, strictly convex surface are exchanged by duality. Indeed, under the polar duality, $I$ is replaced by $\III$, the shape operator $B$ is replace by $B^{-1}$, while the complex structure of the induced metric $J$ is replaced by the complex structure $\bar J$ of $\III$, which is equal to $\bar J = B^{-1}JB$. The left metric of the dual surface is thus equal to:
\begin{eqnarray*}
  \III((E+\bar J B^{-1})\cdot, ((E+\bar J B^{-1})\cdot) & = & \III((E+B^{-1}J)\cdot, ((E+B^{-1}J)\cdot) \\
                                                        & = & I((B+J)\cdot, (B+J)\cdot) \\
                                                        & = & I(J(B+J)\cdot, J(B+J)\cdot) \\
  & = & I((E-JB)\cdot, (E-JB)\cdot)~, 
\end{eqnarray*}
which is the right metric on the primary surface, and conversely.

\subsection{Convex domains with a quasicircle at infinity}

This section is focused on Question $W^{\Omega}_{\AdS^3}$. This question takes a simpler form in $\AdS^3$ than in $\HH^3$ since we only consider space-like surfaces in $\AdS^3$. As a consequence, if a convex domain $\Omega\subset \AdS^3$ has boundary at infinity a disjoint union of quasicircles, this family can only contain one quasicircle, and the boundary $\partial \Omega$ in $\AdS^3$ is composed of two disks sharing the same ``ideal'' boundary.

We first recall a number of known results and some open questions concerning quasifuchsian AdS spacetimes, corresponding to the special case of Question $W^{\Omega}_{\AdS^3}$ where the data on the boundary surfaces are invariant under a pair of cocompact actions of a surface group, and the gluing map at infinity is also equivariant under this pair of actions. The next section presents some recent results as well as open questions without group actions, in particular concerning metrics of constant curvature. In the last part we restate some of the same questions for surfaces of constant curvature in terms of fixed points of earthquakes and of landslides.

\subsubsection{Quasifuchsian AdS spacetimes}

Let $\Omega\subset \AdS^3$ be a convex domain invariant under a surface group action $\rho:\pi_1S\to SO_0(2,2)$ which acts cocompactly on a Cauchy surface, and such that $\rho$ acts properly discontinuously on $\Omega$. The quotient $\Omega/\rho(\pi_1S)$ is then isometric to a geodesically convex subset $\bar \Omega$ in a quasifuchsian AdS spacetime $M$.

Such a quasifuchsian spacetime $M$ contains a smallest non-empty geodesically convex subset, its {\em convex core} $C(M)$. If $M$ is not Fuchsian, $C(M)$ has non-emtpy interior, and its boundary is the disjoint union of two pleated space-like surfaces $\partial_\pm C(M)$, both with hyperbolic induced metrics $m_\pm$ pleated along a measured lamination $l_\pm$. Mess extended to this AdS setting Conjecture \ref{cj:thurston-m} as well as ``dual'' question of Thurston asking for a parameterization of the space of quasifuchsian manifolds by the measured bending lamination, solved in \cite{bonahon-otal,uniqueness}.

\begin{conjecture}[Mess] \label{cj:mess-m}
    Let $m_-, m_+$ be two hyperbolic metrics on a closed surface $S$ of genus at least $2$. Is there a unique quasifuchsian AdS structure on $S\times \R$ such that the induced metrics on the boundary components of the convex core are isotopic to $m_-$ and $m_+$? 
\end{conjecture}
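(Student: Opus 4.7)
The plan is to use Mess's parameterization of quasifuchsian AdS spacetimes by pairs of hyperbolic metrics (the left and right metrics) $(h_L, h_R) \in \cT_S \times \cT_S$, and to interpret the conjecture as the statement that the continuous map
\[
\Phi : \cT_S \times \cT_S \to \cT_S \times \cT_S, \quad (h_L, h_R) \mapsto (m_-, m_+),
\]
sending a quasifuchsian AdS spacetime to the pair of induced metrics on the past and future boundary components of its convex core, is a homeomorphism. Surjectivity of $\Phi$ is the existence statement already established by Diallo, so the task reduces to injectivity.

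For injectivity I would combine three ingredients: properness of $\Phi$, injectivity of its differential $d\Phi$, and the contractibility of the target $\cT_S \times \cT_S$. Together these imply that $\Phi$ is a proper local homeomorphism onto a simply connected space, hence a covering map of degree one, hence a homeomorphism. Properness can be tackled using the earthquake relations recalled in Section \ref{sssc:earthquakes}: if $\lambda_\pm$ denote the measured bending laminations on the two boundary components of the convex core, then $h_L$ and $h_R$ are obtained from $m_\pm$ by left and right earthquakes along $\lambda_\pm/2$. If $(m_-, m_+)$ stays in a compact subset of $\cT_S \times \cT_S$ but $(h_L, h_R)$ escapes every compact set, then after extracting subsequences one produces projectively nontrivial limits of $\lambda_\pm$ and derives contradictory length estimates on a suitable closed curve, using that the total length earthquake along a divergent lamination forces a divergent change in the length of some curve crossing it.

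The heart of the matter, and the expected main obstacle, is infinitesimal rigidity: showing that any first-order deformation of the quasifuchsian AdS structure that preserves the two induced metrics on the convex core to first order is trivial. This is exactly the AdS counterpart of the long-standing Conjecture \ref{cj:thurston-m}, and I do not expect a short argument. The most promising route is approximation by $K$-surfaces: Tamburelli's uniqueness result covers the case of metrics of curvature $K \in (-\infty,-1)$ on convex boundary components in quasifuchsian AdS spacetimes, and the pleated case corresponds to $K \to -1$. One would try to show that infinitesimal rigidity at $K=-1$ arises as a limit of infinitesimal rigidity at $K<-1$, using the landslide flow of \cite{cyclic,cyclic2}, which degenerates to the earthquake flow and provides a smooth interpolation between $K$-surfaces and pleated surfaces, to transport first-order deformations across this limit.

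The delicate point in such a limiting argument is that the support of the bending lamination is a measure-zero set on which the ``smooth'' deformation theory used by Tamburelli breaks down; first-order deformations must be recast in terms of cocycles on the leaf space of the bending lamination, and controlled along the landslide flow as $K$ tends to $-1$. A complementary approach, in the spirit of \cite{hmcb}, would be to identify a natural functional on $\cT_S \times \cT_S$ (perhaps a renormalized AdS volume, or a suitable combination of lengths of bending laminations) whose critical points are precisely the quasifuchsian AdS spacetimes realizing given $(m_-, m_+)$, and to establish its strict convexity along the landslide flow. Either way, the essential obstacle is the same one that blocks Thurston's Conjecture \ref{cj:thurston-m} in the hyperbolic setting.
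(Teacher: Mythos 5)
This statement is not proved in the paper: it is stated as an open conjecture of Mess, and the text explicitly records that only the existence part is known (Diallo, \cite{diallo2013}, \cite[Appendix]{convexhull}), while ``the uniqueness remains open.'' Your proposal correctly identifies this structure --- surjectivity of the map $\Phi$ from left/right metrics to convex-core boundary metrics is Diallo's theorem, and injectivity is the missing piece --- but it does not close the gap. The scheme ``properness $+$ local injectivity $+$ simply connected target $\Rightarrow$ degree-one covering'' is the standard deformation-theoretic template (it is how the smooth analogues in \cite{hmcb,tamburelli2016} are proved), yet its decisive input, infinitesimal rigidity of the convex core with respect to the induced metrics on its boundary, is precisely what is unknown. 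You acknowledge this yourself, and the two routes you sketch (degenerating Tamburelli's $K$-surface rigidity as $K\to -1$ via the landslide flow, or finding a convex functional in the spirit of \cite{hmcb}) are plausible research directions, not arguments: the first founders exactly where you say it does, on recasting first-order deformations as cocycles transverse to the bending lamination and controlling them through the degeneration, and no such control is established here. So the proposal is a program, not a proof, and the essential obstacle it names is the same one that blocks Thurston's Conjecture \ref{cj:thurston-m} in the hyperbolic setting.

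Two smaller points. First, the earthquake relation you invoke for properness is slightly off: in Mess's picture the left and right metrics are obtained from the induced metric $m_+$ on the future boundary of the convex core by left and right earthquakes along the \emph{full} bending lamination $\lambda_+$ (so that $h_R=E^r_{2\lambda_+}(h_L)$), not along $\lambda_\pm/2$; the factor of $1/2$ belongs to the hyperbolic quasifuchsian picture. This does not affect the shape of the properness argument, which is indeed expected to work along the lines you describe. Second, even granting properness and a degree argument, local injectivity of $\Phi$ at a pleated boundary is not a ``local homeomorphism'' statement one can assert without a rigidity theorem, since $\Phi$ is not known to be differentiable in any useful sense at laminations with irrational support.
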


\begin{conjecture}[Mess] \label{cj:mess-l}
    Let $l_-, l_+$ be two measured laminations that fill on a closed surface $S$ of genus at least $2$. Is there a unique quasifuchsian AdS structure on $S\times \R$ such that the measured bending laminations on the boundary of the convex core are $l_-$ and $l_+$?
\end{conjecture}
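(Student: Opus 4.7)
The plan is to mirror the approach that yielded the existence part in \cite{earthquakes} while promoting it to a full bijectivity statement, combining Mess's parameterization of quasifuchsian AdS spacetimes with the earthquake/landslide machinery recalled in Section \ref{sssc:earthquakes}.

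First I would set up the bending map $\Phi\colon \cT_S\times\cT_S\to\cML_S\times\cML_S$ sending a pair of left/right hyperbolic metrics (equivalently, a quasifuchsian AdS spacetime via \cite{mess}) to the pair $(l_-,l_+)$ of measured bending laminations on the past and future boundaries of the convex core. Since the left and right projections from a convex pleated surface are earthquakes, the coupling between the two boundary components of $\partial C(M)$ can be encoded as a pair of earthquake equations that determine the left and right metrics both from $(m_-,l_-)$ and from $(m_+,l_+)$; the problem then reduces to solving this system for unknown induced metrics $(m_-,m_+)$ given prescribed $(l_-,l_+)$.

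For existence I would approximate $(l_-,l_+)$ by rational pairs supported on weighted multicurves. For such rational data the problem becomes finite-dimensional, and one can run a Bonahon--Otal style argument using the filling condition to ensure compactness: a non-filling limit would produce a simple closed geodesic of bounded length evading both laminations, giving uniform control on its holonomy and preventing degeneration of the approximating spacetimes. Passing to the limit then yields a quasifuchsian AdS spacetime with the desired bending data, recovering the result of \cite{earthquakes}.

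The main obstacle is uniqueness, which remains open even in the quasifuchsian hyperbolic analog (Conjecture \ref{cj:thurston-l}). I would attack it in two complementary steps. Infinitesimally, I would compute $d\Phi$ in Bonahon's shear--bend coordinates adapted to AdS via the $\PSL(2,\R)\times \PSL(2,\R)$ decomposition, and attempt to prove injectivity through a second-variation inequality for a natural convex functional, such as the dual volume of the convex core, whose first variation is governed precisely by the bending data. Globally, $\Phi$ is proper onto the filling locus by the compactness argument of the existence step, so a degree computation along the Fuchsian diagonal $l_-=l_+$, where the problem is symmetric and uniqueness is classical (Section \ref{ssc:fuchsian}), would upgrade infinitesimal injectivity to the desired homeomorphism between $\cT_S\times\cT_S$ and the space of filling pairs in $\cML_S\times\cML_S$.
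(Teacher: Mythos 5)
This statement is a conjecture that the paper explicitly leaves open: it records that the existence part was proved in \cite{earthquakes} but that ``the uniqueness is still unknown.'' So there is no proof in the paper to compare against, and your proposal should be judged as a program. The existence half of your program (rational approximation of the laminations, a Bonahon--Otal style compactness argument using the filling hypothesis, and passage to the limit) is indeed essentially the known route to the existence statement, so that part is sound modulo the usual convergence issues for the bending laminations of the limit.

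The uniqueness half, however, contains a concrete gap beyond merely being the open core of the problem. Your degree argument is anchored ``along the Fuchsian diagonal $l_-=l_+$,'' but a pair $(l,l)$ never fills: any simple closed curve disjoint from $l$ has zero intersection with both laminations, so the diagonal lies entirely outside the domain of filling pairs on which $\Phi$ is claimed to be proper. Correspondingly, Fuchsian AdS spacetimes have convex core a totally geodesic surface with empty interior, so they do not arise as $\Phi^{-1}$ of any filling pair, and there is no base point at which uniqueness is ``classical'' in the sense of Section \ref{ssc:fuchsian}. Without such a base point, properness plus local injectivity only gives a covering onto each component of the filling locus, and you would still need to identify the degree somewhere. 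The infinitesimal step is equally unsupported: the first variation of the dual volume is indeed governed by the bending data (a dual Bonahon--Schl\"afli formula), but no convexity or second-variation inequality for it is known, and establishing one is precisely the infinitesimal rigidity problem that keeps both this conjecture and its hyperbolic analog (Conjecture \ref{cj:thurston-l}) open. As written, your argument assumes the hard part rather than proving it.
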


The first of those conjecture corresponds to a special case of Question $W^{thin}_{\AdS^3}$, while the second corresponds to a special case of Question $W^{*thin}_{\AdS^3}$.

The existence part of Conjecture \ref{cj:mess-m} was proved by Diallo, see \cite{diallo2013} or \cite[Appendix]{convexhull} but the uniqueness remains open. Similarly, the existence part of Conjecture \ref{cj:mess-l} was proved in \cite{earthquakes}, but the uniqueness is still unknown.

If $\Omega$ is a geodesically convex domain with smooth, strictly convex boundary, its boundary is the disjoint union of two space-like surfaces, and each is equipped with a Riemannian metric of curvature $K<-1$ (see \cite[Question 3.5]{adsquestions}). Tamburelli \cite{tamburelli2016} proved that any pair of metrics of curvature $K<-1$ on a closed surface can be realized in this manner. However uniqueness remains elusive here, too. Thanks to the duality recalled in Section \ref{sssc:duality}, it follows that any two metrics of curvature $K<-1$ on a closed surface can be realized as the third fundamental form on the boundary of a geodesically convex subset of a quasifuchsian AdS spacetime, therefore providing a partial answer to Question $W^{*\Omega}_{\AdS^3}$.

\subsubsection{Existence results for domains with a quasicircle at infinity}
\label{sssc:existence}

Without the asumption of a surface group acting, there are some recent existence results but without uniqueness so far.

The main result of \cite{convexhull} is that the existence part of Question $W^{\Omega}_{\AdS^3}$ has a positive answer for metrics of constant curvature $K\leq -1$: given $K\leq -1$ and a quasi-symmetric homeomorphism $\sigma:\RP^1\to \RP^1$, there exists a convex domain $\Omega\subset \AdS^3$ with boundary at infinity a quasi-circle, such that the induced metrics on the two boundary components of $\Omega$ have constant curvature $K$, and the gluing map at infinity between them is equal to $\sigma$. Recently, this result has been extended by A. Mesbah \cite{mesbah2024} to metrics of variable curvature, under the hypothesis of bounded derivatives (see Definition \ref{df:bounded-der}).

\begin{theorem}[A. Mesbah]
  Let $h_-, h_+$ be two complete, conformal metrics on $\DD$, with curvature $K\in [-1/\epsilon, -1-\epsilon]$ (for some $\epsilon>0$) and bounded derivatives. Let $\sigma:\partial\DD\to \partial \DD$ be a quasi-symmetric homeomorphism. There exists a convex domain $\Omega\subset \AdS^3$, with boundary at infinity a quasicircle, such that the induced metrics on the two boundary components of $\partial\Omega$ are $h_-, h_+$, with gluing map at infinity $\sigma$.
\end{theorem}

Thanks to the duality in Section \ref{sssc:duality}, this also implies a similar existence result concerning the third fundamental form on $\partial \Omega$, for metrics of constant curvature $K<-1$. This does not however cover the universal version of Conjecture \ref{cj:mess-l}, for which a partial result is provided in \cite{laminations}. To state it, we need two definitions.

\begin{definition} \label{df:fill}
  Let $\lambda$ and  $\mu$ be two mesured laminations on  $\HH^2$. We say that $\lambda$ and $\mu$ \textit{strongly fill} if, for any $\varepsilon>0$, there exists $c>0$ such that, if $\gamma$ is a geodesic segment in $\mathbb{H}^2$ of length at least $c$, 
\[i(\gamma,\lambda)+i(\gamma,\mu)\geqslant \varepsilon~.\]
\end{definition}

\begin{definition}
  A {\em parameterized quasicircle} in $\partial \AdS^3$ is a map $u:\RP^1\to \partial \AdS^3$ such that, under the identification of $\partial \AdS^3$, the composition on the left of $u$ with either the left or the right projection is quasi-symmetric.
\end{definition}

The following statement is the main result of \cite{laminations}.

\begin{theorem} \label{tm:bending}
Let $\lambda_-, \lambda_+\in \cML$ two bounded measured laminations that strongly fill. There exists a parameterized quasicircle $u:\mathbb{RP}^1\to \partial \AdS^3$ such that the measured bending laminations on the upper and lower boundary components of $CH(u(\RP^1))$ are $u_*(\lambda_+)$ and $u_*(\lambda_-)$, respectively.%%\footnote{check notation $E^l$ is defined.}
\end{theorem}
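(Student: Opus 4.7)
The plan is to prove Theorem \ref{tm:bending} by an approximation argument, transferring the cocompact equivariant existence result (the existence part of Mess's Conjecture \ref{cj:mess-l} from \cite{earthquakes}) to the universal setting, with the strong filling hypothesis serving to provide the necessary compactness.

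First, I would approximate the data. Exhausting $\HH^2$ by fundamental polygons for a sequence of cocompact Fuchsian groups $\Gamma_n \subset \PSL(2,\R)$ whose injectivity radii tend to infinity at a chosen basepoint, I would build sequences of measured laminations $\lambda_\pm^n$, each invariant under $\Gamma_n$, such that $\lambda_\pm^n \to \lambda_\pm$ in the space of bounded measured laminations on $\HH^2$ (using weak-$*$ convergence against compactly supported test functions, together with a uniform mass bound coming from the fact that $\lambda_\pm$ are bounded). A small technical care is needed to ensure that each pair $(\lambda_-^n, \lambda_+^n)$ fills the quotient surface $\HH^2/\Gamma_n$; this can be arranged by adding to each approximation a small multiple of a curve system filling the quotient, whose weights can be taken to $0$ as $n \to \infty$.

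Second, for each $n$, the existence part of Conjecture \ref{cj:mess-l}, proved in \cite{earthquakes}, produces a quasifuchsian AdS spacetime $M_n$ whose convex core has prescribed bending laminations $\lambda_\pm^n/\Gamma_n$. Lifting to $\AdS^3$, the universal cover of the convex core of $M_n$ is the convex hull of a $\rho_n$-invariant topological circle $C_n \subset \partial \AdS^3$, where $\rho_n : \Gamma_n \to SO_0(2,2)$ is the holonomy of $M_n$. Since the data is cocompact equivariant, $C_n$ is automatically a quasicircle in $\partial\AdS^3$, and the measured bending laminations of $\partial_\pm CH(C_n)$ are exactly $\lambda_\pm^n$ (under the canonical identification with $\HH^2$ via a pleated map).

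Third, I would pass to the limit. Normalizing by an ambient AdS isometry so that the nearest point on $\partial_- CH(C_n)$ to a fixed basepoint $x_0 \in \AdS^3$ stays bounded, one needs a compactness statement for the family of quasicircles $\{C_n\}$. This is the heart of the matter: the strong filling hypothesis for $(\lambda_-,\lambda_+)$ translates, through the approximation, into a uniform lower bound on the ``width'' of $CH(C_n)$ along all directions in $\HH^2$ of sufficiently large scale, preventing $C_n$ from collapsing onto a single leaf or escaping to a light-like curve on $\partial \AdS^3$. Concretely, the bending along any long geodesic segment being bounded below forces a uniform bound on the quasi-symmetric constants of the left/right boundary maps of $C_n$, via estimates analogous to those of Epstein--Marden and their AdS counterparts in \cite{convexhull,bonsante2020antide}.

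Fourth, once a subsequential limit $C_\infty$ is extracted, continuity of the bending lamination map under Hausdorff/weak-$*$ convergence of convex hulls (again using the bound on bending weights, so that no mass escapes to infinity along $\RP^1$) identifies the bending laminations of $\partial_\pm CH(C_\infty)$ with $u_*(\lambda_\pm)$, where $u$ is the obtained parameterization.

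The main obstacle is the compactness step: translating the purely intrinsic strong filling condition on $\HH^2$ into an extrinsic uniform control on the quasi-symmetric constants of the boundary circles $C_n$. One must simultaneously rule out two degenerations, namely $C_n$ pinching (which would correspond to a long segment in $\HH^2$ meeting little total mass of $\lambda_-^n + \lambda_+^n$) and $C_n$ becoming tangent to a light-like line (which would correspond to very large atomic weights, excluded by boundedness of the laminations). Both are controlled by Definition \ref{df:fill}, but making the estimates uniform in $n$ and robust under weak-$*$ convergence of laminations is the technical core of the proof.
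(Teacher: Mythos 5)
The paper does not actually prove Theorem \ref{tm:bending}: it is quoted verbatim as the main result of \cite{laminations}, so there is no internal proof here to compare against. That said, your overall strategy --- approximate $(\lambda_-,\lambda_+)$ by cocompact-equivariant pairs, invoke the existence part of Conjecture \ref{cj:mess-l} from \cite{earthquakes}, normalize, and pass to a limit using a compactness statement extracted from the strong filling hypothesis --- is the natural route and matches the general approach of the cited work.

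The gap is in your third step, and it is not a technicality: it is where essentially the entire content of the theorem lives. You assert that strong filling ``forces a uniform bound on the quasi-symmetric constants of the left/right boundary maps of $C_n$'', but no mechanism is given, and the naive route does not suffice. What strong filling gives directly is a lower bound on the total bending met by long geodesic segments, hence (after some work) an upper bound on the width of $CH(C_n)$ away from $\pi/2$; however, width strictly less than $\pi/2$ does not characterize quasicircles in $\partial\AdS^3$ (see the paper's remark around \cite{width}), so a uniform width bound alone yields neither uniform quasi-symmetry constants nor acausality of the limit curve in the required graph form. A genuinely quantitative argument is needed at exactly this point. Two auxiliary steps you treat as routine also require proof: (i) the approximation of a general bounded lamination on $\HH^2$ by $\Gamma_n$-invariant ones must be carried out so that the strong filling constants $(\epsilon,c)$ of the pairs $(\lambda_-^n,\lambda_+^n)$ are uniform in $n$, since otherwise the compactness estimate, even once established, degenerates along the sequence; and (ii) the convergence of the bending laminations of $\partial_\pm CH(C_n)$ to those of $\partial_\pm CH(C_\infty)$, and their identification with $u_*\lambda_\pm$ for a single parameterization $u$, requires both that no transverse mass escapes to infinity and that the pleated-surface parameterizations be chosen coherently along the sequence. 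As written, the proposal is a plausible outline whose decisive estimate is missing.
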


\begin{question} \label{q:unique}
  In this setting, is $u$ unique? 
\end{question}

%% remark that also results on ideal polyhedra and hyperideal polyhedra in AdS. 

\subsubsection{Relations to fixed points of landslides}

%% \paragraph{Pleated surfaces and earthquakes}

We first recall the well-known relations between pleated surfaces in $\AdS^3$ and earthquakes.

Pleated surfaces in $\AdS^3$ are related to earthquakes, and the results and questions on prescribing the bending laminations on the boundary on convex hulls of quasi-circles can be stated in terms of fixed points of earthquakes. This equivalence was already noted (and used) in \cite{earthquakes} for the bending lamination on the boundary of the convex core of quasifuchsian AdS spacetimes. A similar equivalence holds without group actions, for convex hulls of quasi-circles in $\AdS^3$, and is stated in \cite{laminations}. The statement relies on a definition. 

\begin{definition} %% from \cite{laminations}
  We denote by $\cE^l:\cML\times \cQS\to \cQS$ the map defined as
  $$ \cE^l(\lambda)(u)=E^l(u_*\lambda)\circ u~, $$
  and similarly for $\cE^r$.
\end{definition}

Defined in this way, the map $\cE^l$ satisfies a ``flow'' property, which follows more or less directly from the definition:
$$ \cE^l(u_*(t\lambda))(\cE^l(s\lambda)(u))=\cE^l((s+t)\lambda)(u)~. $$
There is of course a similar definition for right earthquake map $\cE^r$, based on the right earthquakes $E^r$. It then follows again more or less directly from the definitions that $\cE^r$ is the inverse of $\cE^l$, in the sense that
$$ \cE^r(u_*\lambda)(\cE^l(\lambda)(u))=u~. $$

Theorem \ref{tm:bending} can then be stated equivalently as follows.

\begin{theorem}\label{tm:earthquakes}
  Let $\lambda_-, \lambda_+\in \cML$ be two bounded laminations that strongly fill. There exists a quasi-symmetric homeomorphism $u:\RP^1\to \RP^1$ such that $\cE^l(\lambda_l)(u)=\cE^r(\lambda_r)(u)$.
\end{theorem}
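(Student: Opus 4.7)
The plan is to derive Theorem \ref{tm:earthquakes} directly from Theorem \ref{tm:bending} by translating geometric data on the convex hull of a quasicircle into the algebraic earthquake equation. First I would apply Theorem \ref{tm:bending} to the pair $(\lambda_-,\lambda_+)$ to obtain a parameterized quasicircle $v:\RP^1\to \partial\AdS^3$ whose convex hull has upper (resp.\ lower) bending lamination $v_*\lambda_+$ (resp.\ $v_*\lambda_-$). Using the identification $\partial\AdS^3\simeq \RP^1\times \RP^1$, write $v(x)=(v_L(x),v_R(x))$; by the definition of parameterized quasicircle both $v_L$ and $v_R$ are quasi-symmetric homeomorphisms of $\RP^1$. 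The candidate solution to the earthquake equation is then
\[ u:=v_R\circ v_L^{-1}:\RP^1\to \RP^1, \]
which is quasi-symmetric since it is the composition of two quasi-symmetric homeomorphisms.

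Second, I would invoke the Mess correspondence between pleated surfaces in $\AdS^3$ and earthquakes, as recalled in Section \ref{sssc:earthquakes} and used in \cite{earthquakes,laminations}. Concretely, if $\Sigma_+$ is the upper boundary of the convex hull of $v(\RP^1)$, then the left projection $\pi_L\colon \Sigma_+\to P_0$ and the right projection $\pi_R\colon \Sigma_+\to P_0$ differ by a left earthquake along the pleating lamination $v_*\lambda_+$, and the boundary values recover $v_L$ and $v_R$. In other words, the extension of $\pi_L\circ \pi_R^{-1}$ to $\RP^1$ agrees with $E^l(v_*\lambda_+)$. A dual statement on $\Sigma_-$ gives that the analogous map for the lower boundary agrees with $E^r(v_*\lambda_-)$ (the handedness swap reflects that $\Sigma_-$ lies on the opposite side of the convex core).

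Third, I would combine these two identifications. From the upper boundary one obtains $v_R=E^l(v_*\lambda_+)\circ v_L$, while from the lower boundary one obtains $v_L=E^r(v_*\lambda_-)\circ v_R$, or equivalently $v_R=E^l(v_*\lambda_-)^{-1}\circ v_L$ (using that $E^r$ is the inverse of $E^l$ acting on appropriate arguments). Composing on the right with $v_L^{-1}$ and recalling the definitions
\[ \cE^l(\lambda)(u)=E^l(u_*\lambda)\circ u,\qquad \cE^r(\lambda)(u)=E^r(u_*\lambda)\circ u, \]
together with the push-forward relations $v_*\lambda_\pm = (v_L)_*\lambda_\pm$ (where we transport $\lambda_\pm$ from $\HH^2$ to $\RP^1$ via the chosen uniformization), one checks that both sides $\cE^l(\lambda_-)(u)$ and $\cE^r(\lambda_+)(u)$ coincide with $v_R\circ v_L^{-1}=u$ composed with the appropriate earthquake, hence with each other.

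The main obstacle I anticipate is the careful bookkeeping of orientations and of which lamination corresponds to which handedness of earthquake: the convention that $\lambda_+$ lives on the upper pleated surface and produces a \emph{left} earthquake while $\lambda_-$ produces a \emph{right} earthquake needs to be reconciled with the conventions in \cite{mess,earthquakes}. A secondary subtlety is the pushforward: the lamination $\lambda_\pm$ is defined on $\HH^2\cong \DD$, and one must verify that, under the Riemann uniformization of the two boundary components of the convex hull, its boundary extension matches the map $u$ used to push $\lambda_\pm$ forward in the definition of $\cE^{l/r}$. Once these conventions are aligned, the equation $\cE^l(\lambda_-)(u)=\cE^r(\lambda_+)(u)$ is essentially tautological from the geometry of the convex hull.
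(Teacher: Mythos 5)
Your overall strategy is exactly the paper's: Theorem \ref{tm:earthquakes} is presented there purely as an equivalent restatement of Theorem \ref{tm:bending}, obtained by translating the bending laminations on the two boundary components of the convex hull of a parameterized quasicircle into left and right earthquakes via the Mess correspondence (the left and right projections of a locally convex pleated surface onto a totally geodesic plane $P_0$ are earthquake maps extending the boundary projections). The paper gives no more detail than this, and your first two steps are the correct expansion of that translation.

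There is, however, one concrete slip that prevents your third step from closing: the candidate solution should not be $u:=v_R\circ v_L^{-1}$. That map is the quasicircle itself, viewed as a graph from the left $\RP^1$ factor of $\partial\AdS^3$ to the right one; pushing $\lambda_\pm$ forward by it has no geometric meaning here, whereas in the equation $\cE^l(\lambda)(u)=E^l(u_*\lambda)\circ u$ the map $u$ is precisely the one transporting the laminations from the abstract disk to the hyperbolic plane $P_0$. The correct candidate is therefore $u:=v_L=\pi_L\circ v$, the parameterization followed by the left projection. With that choice, the upper boundary surface gives $\cE^l(\lambda_+)(v_L)=E^l\bigl((v_L)_*\lambda_+\bigr)\circ v_L=v_R$ (since $\Pi_R\circ\Pi_L^{-1}$ is the left earthquake along the image of the upper bending lamination), and the lower boundary surface gives $\cE^r(\lambda_-)(v_L)=v_R$ by the handedness swap; the two right-hand sides coincide and the theorem follows. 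As you wrote it, the lower-boundary relation $v_L=E^r(v_*\lambda_-)\circ v_R$ has the earthquake based at $v_R$ rather than at $v_L$, so your two relations express $v_R$ in terms of $v_L$ and $v_L$ in terms of $v_R$ respectively, and post-composing with $v_L^{-1}$ does not convert them into a single equation $\cE^l(\lambda_+)(u)=\cE^r(\lambda_-)(u)$ for one $u$ (it changes the map along which the laminations are pushed forward, not merely the normalization). This is exactly the bookkeeping you flagged as the anticipated obstacle, and the fix is small, but the proof as written does not yet verify the stated identity.
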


Question \ref{q:unique} can then also be formulated equivalently as the uniqueness of this quasi-symmetric homeomorphism $u$.

Thanks to the ``cycle'' properties above, this theorem can be stated as the existence of fixed point of the map $u\mapsto \cE^l(u_*\lambda_r)\circ\cE^l(\lambda_l)(u)$, as a map from $\cT$ to $\cT$. The uniqueness in Question \ref{q:unique} is equivalent to the uniqueness of this fixed point.

%\paragraph{$K$-surfaces and landslides}

\medskip

We now consider the similar relation between $K$-surfaces and landslides.
For $K$-surfaces, a similar statement can be made, with earthquakes replaced by landslides. However measured laminations have to be replaced by quasi-symmetric maps. The construction requires some care, and begins with the definition of the landslides on $\cQS\times \cQS$, generalizing (in a not completely direct way) the definition in \cite{cyclic} for closed surfaces. We recall the definition here, basically repeating material from \cite[\S 8]{cyclic}, with notations a bit better adapted to our purpose.

Let $u,u^*\in \cQS$. Then $u^*\circ u^{-1}$ is quasi-symmetric. It follows (using the main result in \cite{maximal}) that there exists a unique quasi-conformal minimal Lagrangian diffeomorphism $w:\HH^2\to \HH^2$ such that $\partial w=u^*\circ u^{-1}$.

There is then a unique bundle morphism $b:T\HH^2\to T\HH^2$ which is self-adjoint for $h_0$, Codazzi, of determinant $1$, and such that $w^*h_0=h_0(b\cdot, b\cdot)$.

Let $\theta \in (0,2\pi)$. We set
$$ \beta_\theta = \cos(\theta/2) E + \sin(\theta/2) Jb~. $$
A direct computation (see \cite{cyclic}) shows that $\beta_\theta$ is Codazzi and of determinant $1$, and therefore $h_\theta=h_0(\beta_\theta\cdot, \beta_\theta\cdot)$ is hyperbolic. As a consequence, the identity map between $\HH^2$ and $(\HH^2,h_\theta)$ determines a quasiconformal diffeomorphism from $\HH^2$ to $\HH^2$, well-defined up to post-composition by a hyperbolic isometry. Taking its boundary value determines a quasi-symmetric map $v_\theta$. 

Note that composing $u$ or $u^*$ on the left by a M\"obius transformation does not change $b$, and therefore does not change $v_\theta$, which is defined up to post-composition by a hyperbolic isometry anyway. So $v_\theta$ is well-defined as an element of the universal Teichm\"uller space $\cT$, and depends on $u$ and $u^*$ considered as elements of $\cT$.

\begin{definition}
  Let $u,u^*\in \cT$, we set $L_{e^{i\theta}}(u,u^*)=(v_\theta u,v_{\theta+\pi}u)\in \cT\times \cT$, and $L^1_{e^{i\theta}}(u,u^*)=v_\theta u\in \cT$.
\end{definition}

As an example, for $\theta=0$, $\beta_0=E$, and as a consequence $w_0=Id$ and $L^1_1(u,^*)=u$. On the other hand, for $\theta=\pi$, $\beta_\pi=Jb$, so that $h_\pi=h_0(Jb\cdot, Jb\cdot)=h_0(b\cdot,b\cdot)=w^*h_0$, $v_\pi=w$, and $L^1_{-1}(u,u^*)=u^*$.

% \begin{remark}
%   The definition of  $L_{e^{i\theta}}(u,u^*)$ does not depend on the choice of $u_0$.
% \end{remark}

%%% remaining question 1: earthquake flow as a limit?

%%% remaining question 2: universal version of landslide thm ? 

This definition is strongly related to the definition given in \cite{cyclic} for hyperbolic metrics on closed surfaces. Specifically, consider a closed surface $S$ of genus at least $2$, equipped with a fixed hyperbolic metric $h_0\in \cT_S$, see \cite[Prop. 8.4]{cyclic}.

%To each hyperbolic metric $h\in \cT_S$, we associate a quasi-symmetric homeomorphism $u_h:\RP^1\to \RP^1$, which is the boundary value of any diffeomorphism $w:\HH^2\to \HH^2$ conjugating the holonomy representations of $h_0$ and $h$. 

% \begin{lemma}
%   Let $S$ be a closed surface of genus at least $2$, and let $h,h*\in \cT_S$ be two hyperbolic metrics on $S$. Then, using the notations above and those from \cite[\S 1.6]{cyclic}, for all $\theta\in (0,2\pi)$,
%   $$ u_{L^1_{e^{i\theta}}(h,h^*)} = L^1_{e^{i\theta}}(u_h, u_{h^*})~. $$
% \end{lemma}

% \begin{proof}
  
% \end{proof}

The arguments and computations in \cite[\S 1.6]{cyclic} can be used to prove that the landslide map $L_{e^{i\theta}}$ on $\cT\times \cT$  satisfies the flow property, that is, $L_{e^{i\theta}}\circ L_{e^{i\theta'}}=L_{e^{i(\theta+\theta')}}$ for all $\theta,\theta'\in \R$.

%%% state analog ie given convex surfaces with boundary a quasi-circle 

We can now formalize the relation between landslides and $K$-surfaces in $\AdS^3$.  Let $u_L, u_R:\RP^1\to \RP^1$ be quasi-symmetric homeomorphism, and let $C$ be the graph of $u_R\circ u_L^{-1}$, seen as a quasicircle in $\partial \AdS^3$. Note that $C$ can be identified with $\RP^1$ by choosing any parameterization $\rho:\RP^1\to C$ such that $\rho$ composed with the left projection is a quasi-symmetric homeomorphism. We use such an identification below implicitly, so as to simplify notations.

Let $K\in (-\infty, -1)$, and let $S_+$ (resp. $S_-$) be the unique past-convex (resp. future-convex) $K$-surface with boundary $C$. Let $u_L:C\to \RP^1$ and $u_R:C\to \RP^1$ be the left and right projections, and let $u_+:C\to \RP^1$ (resp. $u^*_+:C\to \RP^1$) be the identification of $C$ with $\partial_\infty \HH^2$ obtained by considering the developing map of the hyperbolic metric $|K| I$ (resp. $|K^*|\III$) on $S_+$. Finally, let $u_-, u^*_-$ be the corresponding maps for $S_-$. Finally, let $\theta \in (0,\pi)$ be such that $K=-1/\cos^2(\theta/2)$. The following statement then collects the relations recalled in Section \ref{sssc:earthquakes}.

%%MOSTLY FROM \cite{cyclic,bonsante-seppi:area}...

\begin{theorem}
  Under those conditions,
  $$ u_L=L^1_\theta(u_+, u_+^*)=L^1_{-\theta}(u_-, u_-^*)~, $$
  $$ u_R=L^1_{-\theta}(u_+, u_+^*)=L^1_{\theta}(u_-, u_-^*)~. $$
Conversely, if $u_L, u_R, u_+, u_+^*, u_-, u_-^*$ are elements of $\cT$ satisfying those two equations, then they arise from the construction above.
\end{theorem}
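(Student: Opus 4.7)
The plan is to translate the intrinsic geometry of a $K$-surface in $\AdS^3$ into landslide data on $\cT$, and then invoke the existence and uniqueness of minimal Lagrangian extensions from \cite{maximal} to invert the translation.

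First I would handle the past-convex surface $S_+$. Because $S_+$ has constant curvature $K=-1/\cos^2(\theta/2)<-1$, Labourie's correspondence (recalled in Section \ref{ssc:similar}) says that $|K|I$ and $|K^*|\III$ are hyperbolic metrics on $S_+$, and that the identity $(S_+,|K|I)\to (S_+,|K^*|\III)$ is minimal Lagrangian. Passing to boundary values of the developing maps, this is exactly the minimal Lagrangian data $(u_+,u_+^*)$ used to define $L_{e^{i\theta}}$. Using $B=\tan(\theta/2)\bar b$ with $\bar b$ self-adjoint, Codazzi and of determinant $1$ for $|K|I$, the computation recalled in Section \ref{sssc:earthquakes} gives
\[
I_L \;=\; \tfrac{1}{\cos^2(\theta/2)}\,|K|I\bigl((\cos(\theta/2)E+\sin(\theta/2)J\bar b)\cdot,(\cos(\theta/2)E+\sin(\theta/2)J\bar b)\cdot\bigr),
\]
which is precisely the hyperbolic metric $h_\theta$ entering the definition of $L^1_{e^{i\theta}}(u_+,u_+^*)$, and similarly for $I_R$ with $\theta$ replaced by $-\theta$.

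Next, I would match these intrinsic quantities with the left and right projections. The point is that the left projection $\pi_L\colon S_+\to P_0$ is by construction an isometry from $(S_+,I_L)$ to the fixed totally geodesic plane $P_0\cong \HH^2$, so $\pi_L$ extends to a boundary homeomorphism $C\to \partial P_0$ whose quasi-symmetric class is precisely represented by $v_\theta u_+$; hence $u_L=L^1_\theta(u_+,u_+^*)$. The same argument applied to the right projection yields $u_R=L^1_{-\theta}(u_+,u_+^*)$. Quasi-conformality of the landslide $v_\theta$, needed to identify its boundary value with $u_L$, follows from the boundedness of the principal curvatures of $S_+$, which itself comes from the maximum principle applied to the $K$-surface equation (cf. \cite{bonsante-seppi:area}). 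The equations for $S_-$ are obtained by the same argument, observing only that the outward normal, and hence the shape operator, change sign on the future-convex side, which swaps the roles of $\theta$ and $-\theta$.

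For the converse direction, assume $(u_L,u_R,u_\pm,u_\pm^*)\in\cT^6$ satisfy the two equations. By \cite[Theorem 1.4]{maximal}, the quasi-symmetric map $u_+^*\circ u_+^{-1}$ admits a unique quasi-conformal minimal Lagrangian extension $w\colon\HH^2\to\HH^2$, giving a self-adjoint Codazzi bundle morphism $\bar b$ with $\det\bar b=1$. Set $B=\tan(\theta/2)\bar b$ and $I=(1/|K|)h_0$; the Gauss--Codazzi equations in $\AdS^3$ are satisfied by $(I,B)$, so the fundamental theorem of surfaces in $\AdS^3$ produces an equivariant immersion $\phi\colon \HH^2\to \AdS^3$ with induced metric $I$, shape operator $B$, and bounded principal curvatures. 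The boundary at infinity of $\phi(\HH^2)$ is an acausal quasi-circle, and the landslide equations imposed on $u_L,u_R$ force its left and right projections to agree with $u_L$ and $u_R$ — hence its boundary is the graph of $u_R\circ u_L^{-1}$, i.e.\ the prescribed $C$. The same construction applied to $(u_-,u_-^*)$ yields the future-convex surface $S_-$.

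The main obstacle will be the converse direction, and specifically the passage from the abstract landslide data to a properly embedded, complete space-like $K$-surface with the correct asymptotic boundary. The regularity of $\phi$ is automatic from the Gauss--Codazzi construction, but controlling its behaviour at infinity — in particular checking that the boundary of its image is a parameterised quasi-circle and that its left and right projections extend to the prescribed quasi-symmetric maps $u_L,u_R$ — requires a careful argument combining the quasi-conformality of $w$ (whence of the landslide $v_\theta$) with the foliated structure of $\partial \AdS^3$ used to define $\pi_L,\pi_R$. The uniqueness statement is then a direct consequence of the uniqueness of the minimal Lagrangian extension in \cite{maximal}, since the pair $(I,B)$ is determined by $(u_+,u_+^*)$ up to global $\AdS^3$ isometry.
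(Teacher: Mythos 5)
Your argument is correct and follows essentially the same route the paper intends: the theorem is presented there as a collection of the relations from Section \ref{sssc:earthquakes} (the identity between $|K|I$ and $I_L$, resp.\ $I_R$, is a $\pm\theta$-landslide via $B=\tan(\theta/2)\bar b$, with signs swapped on the future-convex component), together with the converse reconstruction of a $K$-surface from a quasi-conformal landslide using the minimal Lagrangian extension of \cite{maximal}. Your identification $I_L=h_\theta=h_0(\beta_\theta\cdot,\beta_\theta\cdot)$ with $h_0=|K|I$, and the matching of boundary values $u_L=v_\theta\circ u_+$ via the left projection being an isometry $(S_+,I_L)\to P_0$, is exactly the content being collected, so no further comparison is needed.
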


% \begin{proof}
%   MOSTLY FOLLOWS FROM THE DEFINITION, FULLY FOR FIRST PART, LESS FOR SECOND.
% \end{proof}

%%% ref to 7.4 and 7.7 in \cite{convexhull} itself refering to \cite{earthquakes}.

%%% ref to \cite{cyclic} or \cite{cyclic2} for analog situation for quasifuchsian, existence follows from \cite{} 

%% \subsection{Convex domains with a quasicircle at infinity in $AdS^3$}

%% question W_{AdS}^{thin} for prescribing the induced metric and gluing map at infinity

%% existence result of \cite{convexhull} for K-surfaces

%% \subsubsection{Quasifuchsian AdS spacetimes}

%% Existence result by Tamburelli for realization of past and future metric

%% For K-surfaces, unique prescription of past and future metric by landslides ??

\subsection{Spacelike disks in $\AdS^3$}

We now turn to Question $W^{imm}_{\AdS^3}$ and its dual. Following the organisation of the previous section, we first focus on quasifuchsian AdS spacetimes, corresponding to immersions of the universal cover of a surface which are equivariant under a representation of the fundamental group of the surface in $SO_0(2,2)$. In this case, Question $W^{imm}_{\AdS^3}$ reduces to the following.

\begin{question}
  Let $h$ be a Riemannian metric of curvature $K<-1$ on a closed surface $S$ of genus at least $2$, and let $h_0\in \cT_S$ be a hyperbolic metric on $S$. Is there a unique quasifuchsian AdS spacetime $M$ containing a past-convex Cauchy surface with induced metric isotopic to $h$ and left metric isotopic to $h_0$? 
\end{question}

Tamburelli \cite[Prop. 7.1]{tamburelli2016} shows that existence holds in this question. It then follows from the duality described in Section \ref{sssc:duality} that the same result holds with the third fundamental form prescribed instead of the metric. 

Back to the universal case, a full answer to Question $W^{imm}_{\AdS^3}$ can be given for $K$-surfaces for $K\leq -1$. For $K=-1$, that is, for locally convex pleated surfaces, this answer is a direct consequence of Thurston's Earthquake Theorem.

\begin{proposition}
  Let $\sigma:\RP^1\to \RP^1$ be a quasi-symmetric homeomorphism. There is a unique past-convex pleated surface in $\AdS^3$ such that the gluing map at infinity between the induced metric and the left metric is $\sigma$. 
\end{proposition}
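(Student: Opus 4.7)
The plan is to use the Mess correspondence between pleated surfaces in $\AdS^3$ and earthquakes together with the universal version of Thurston's Earthquake Theorem. Both existence and uniqueness should follow essentially formally once the correspondence is correctly set up and checked to match the gluing map at infinity.

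\emph{Existence.} Fix a reference totally geodesic space-like plane $P_0\subset \AdS^3$, which we identify isometrically with $\HH^2$. First I would apply the universal Thurston Earthquake Theorem (in the form due to Gardiner--Hu--Lakic / \v Sari\'c) to the quasi-symmetric homeomorphism $\sigma$: there exists a unique bounded measured lamination $\lambda$ on $\HH^2$ such that the boundary extension of the left earthquake $E^l_\lambda:\HH^2\to\HH^2$ is $\sigma$ (up to M\"obius normalization). Next, I would bend $P_0$ in $\AdS^3$ along the lamination $\lambda$, following the standard construction of pleated surfaces in a Lorentzian model space: the boundedness of $\lambda$ ensures that the bending produces a complete, properly embedded, past-convex pleated surface $S\subset\AdS^3$ whose induced metric $I$ is the hyperbolic metric on $\HH^2$ and whose measured bending lamination is $\lambda$.

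\emph{Matching the gluing map.} The key geometric fact, already recalled in the excerpt (Section~\ref{sssc:earthquakes}), is that the left projection $\pi_L:S\to P_0$ of a locally convex pleated surface is an earthquake map; more precisely, viewed as a map between the hyperbolic metrics $I$ (on $S$) and $I_L$ (the left metric on $S$, obtained by pulling back the hyperbolic metric on $P_0$), it is the left earthquake along $\lambda$. Choosing Riemann uniformizations $U_I:\DD\to (S,I)$ and $U_L:\DD\to (S,I_L)$ compatible with the identifications of $S$ and of $P_0$ with $\HH^2$, the composition $U_L^{-1}\circ U_I$ is exactly $E^l_\lambda$ after Möbius normalization, so its boundary extension $\partial(U_L^{-1}\circ U_I)$ coincides with $\sigma$. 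This verifies that the induced metric/left metric gluing map of $S$ is $\sigma$.

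\emph{Uniqueness.} Conversely, suppose $S'\subset\AdS^3$ is any past-convex pleated surface whose gluing map at infinity between $I$ and $I_L$ equals $\sigma$. Let $\lambda'$ be its measured bending lamination; by the Mess correspondence for pleated surfaces (which applies since the gluing map is quasi-symmetric, forcing $\lambda'$ to be bounded by Lemma~\ref{lm:quasisym} applied in the AdS setting), the identity $(S',I)\to(S',I_L)$ is a left earthquake along $\lambda'$, hence $\partial E^l_{\lambda'}=\sigma$. The uniqueness clause of Thurston's Earthquake Theorem then forces $\lambda'=\lambda$, so $S'$ is isometric to $S$ by an isometry of $\AdS^3$.

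\emph{Main obstacle.} The substantive content is not in either step separately but in certifying that the quasi-symmetric regularity of $\sigma$ forces the bending lamination to be bounded, and that conversely a bounded lamination yields a pleated surface whose left-projection extends continuously to the correct quasi-symmetric boundary map. This boundedness/quasi-symmetry dictionary is exactly what makes Thurston's theorem (in its quasi-symmetric rather than just homeomorphism version) applicable, and is where an AdS analogue of Lemma~\ref{lm:quasisym} for pleated surfaces, together with the \v Sari\'c-type refinement of the Earthquake Theorem, must be invoked. Once these ingredients are in place, the proof reduces to the formal dictionary sketched above.
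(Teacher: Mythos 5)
Your proposal is correct and follows essentially the same route as the paper: the paper's proof likewise combines the fact that the left projection of a past-convex pleated surface to a totally geodesic plane is the earthquake along its bending lamination (extending continuously to the boundary as the gluing map) with the universal Thurston Earthquake Theorem, which realizes any quasi-symmetric homeomorphism as the boundary extension of the earthquake along a unique bounded measured lamination. Your additional remarks on the boundedness/quasi-symmetry dictionary are a reasonable elaboration of what the paper leaves implicit in its citation.
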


The proof follows from the fact (see \cite[Prop. 7.7]{maximal}) that given a past-convex pleated surface $S\subset \AdS^3$, the left projection from the boundary curve to a circle in $\partial \AdS^3$ (the boundary of a totally geodesic space-like plane $P$) is the continous extension to the boundary of the left projection from $S$ to $P$, which itself is the earthquake along the measured bending lamination on $S$. But any quasi-symmetric homeomorphism from $\RP^1$ to $\RP^1$ is the continuous extension to the boundary of the earthquake along a unique bounded measured lamination on the hyperbolic disk, and the result follows.

Similarly, for $K$-surfaces for $K<-1$, a positive answer to Question $Q^{imm}_{\AdS^3}$ follows from a recent result of Bonsante and Seppi \cite{bonsante-seppi:area}, who proved that any quasi-symmetric homeomorphism from $\RP^1$ to $\RP^1$ is the boundary of a unique quasi-conformal $\theta$-landslide.

\begin{proposition}
    Let $\sigma:\RP^1\to \RP^1$ be a quasi-symmetric homeomorphism, and let $K<-1$. There is a unique past-convex $K$-surface in $\AdS^3$ such that the gluing map at infinity between the induced metric and the left metric is $\sigma$. 
\end{proposition}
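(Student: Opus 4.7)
The plan is to translate the statement, via the correspondence between $K$-surfaces and landslides recalled in Section~\ref{sssc:earthquakes}, into the recent result of Bonsante--Seppi asserting that every quasi-symmetric homeomorphism of $\RP^1$ is the boundary extension of a unique quasi-conformal $\theta$-landslide of $\HH^2$.

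First, given $K<-1$, set $\theta\in(0,\pi)$ by $K=-1/\cos^2(\theta/2)$. Let $S\subset \AdS^3$ be a past-convex $K$-surface with bounded principal curvatures, with induced metric $I$ and shape operator $B$. By the Gauss equation in $\AdS^3$, $\det B=-K-1=\tan^2(\theta/2)$, so $B=\tan(\theta/2)\bar b$ with $\bar b$ self-adjoint, Codazzi, and of determinant $1$ for $I$. The computation reproduced in Section~\ref{sssc:earthquakes} then shows that $|K|I$ and $I_L$ are hyperbolic metrics and that the identity map from $(S,|K|I)$ to $(S,I_L)$ is a $\theta$-landslide (in the sense recalled in Section~\ref{sssc:earthquakes}). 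Moreover, since $S$ has bounded principal curvatures, this landslide is quasi-conformal. The next step is to identify its boundary extension: uniformizing $(S,|K|I)$ and $(S,I_L)$ as the hyperbolic disk gives two Riemann uniformizations, and the induced boundary homeomorphism of $\RP^1$ is exactly the gluing map at infinity between $I$ and $I_L$ of the surface $S$. Hence, for $K$-surfaces with bounded principal curvatures, the gluing map at infinity between $I$ and the left metric coincides with the boundary value of the associated $\theta$-landslide.

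Conversely, starting from a quasi-symmetric homeomorphism $\sigma:\RP^1\to\RP^1$, the Bonsante--Seppi theorem \cite{bonsante-seppi:area} produces a unique quasi-conformal $\theta$-landslide $w:\HH^2\to\HH^2$ (i.e.\ a pair $(h_0,h_\theta)=(h_0,h_0(\beta_\theta\cdot,\beta_\theta\cdot))$ with $\beta_\theta=\cos(\theta/2)E+\sin(\theta/2)J\bar b$, $\bar b$ Codazzi self-adjoint for $h_0$ and $\det\bar b=1$) whose boundary extension is $\sigma$. Rescaling by setting $I=(1/|K|)h_0$ and $B=\tan(\theta/2)\bar b$, the pair $(I,B)$ satisfies the Gauss--Codazzi equations in $\AdS^3$ (Gauss: $K_I=-1-\det B=K$; Codazzi is inherited from that of $\bar b$ for $h_0$ under the conformal rescaling, by the same computation as in Section~\ref{ssc:similar}). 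This yields, by the fundamental theorem of surface theory in $\AdS^3$, an isometric immersion $\phi:\HH^2\to \AdS^3$ which, using positivity of $\det B$ and the past-convex orientation, is properly embedded as a past-convex $K$-surface. By construction its gluing map at infinity between $I$ and $I_L$ is $\sigma$. Uniqueness follows because any other past-convex $K$-surface realizing $\sigma$ produces by the first step a quasi-conformal $\theta$-landslide with boundary $\sigma$, which must coincide with the one given by Bonsante--Seppi, and hence recovers the same pair $(I,B)$ up to a global isometry of $\AdS^3$.

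The two main technical points I expect to require care are: (i) verifying that the past-convex $K$-surface produced from the landslide is indeed properly embedded with the prescribed asymptotic behaviour, rather than merely immersed; this uses the fact that past-convex $K$-surfaces in $\AdS^3$ are graphs over any totally geodesic spacelike plane, together with the quasi-conformal (hence bounded $\bar b$) nature of the landslide to produce uniformly bounded principal curvatures; and (ii) showing that bounded principal curvatures on the $K$-surface translate into a quasi-conformal landslide and into a quasi-symmetric gluing map at infinity, which is the analogue for the left projection of the estimate in Lemma~\ref{lm:quasisym} and underlies the equivalence used above.
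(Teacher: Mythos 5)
Your proposal is correct and follows essentially the same route as the paper: both reduce the statement to the correspondence between past-convex $K$-surfaces and quasi-conformal $\theta$-landslides (with $K=-1/\cos^2(\theta/2)$, via the Gauss equation and the computation of $I_L$ in terms of $\bar b$), identify the gluing map at infinity between $I$ and $I_L$ with the boundary extension of the left projection, and then invoke the Bonsante--Seppi theorem on unique quasi-conformal $\theta$-landslide extensions of quasi-symmetric maps. The paper's version is terser (it cites a lemma from an earlier work for the identification of the gluing map with the boundary value of the left projection, where you argue via Riemann uniformization), but the substance is the same.
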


Again the result follows from the fact following facts.
\begin{enumerate}
\item For such a $K$-surface, the gluing map between the induced metric and the left metric is given by the left projection from the boundary of $S$ to the boundary of a totally geodesic space-like plane $P\subset \AdS^3$ (see \cite[Lemma 3.18]{maximal}).
\item For a $K$-surface, the left projection is a $\theta$-landslide, for $K=-1/\cos^2(\theta)$, and conversely if the left projection is a $\theta$-landslide, then $S$ is a $K$-surface, see above.
  \item It is proved in \cite{bonsante-seppi:area} that any quasi-symmetric homeomorphism from $\RP^1$ to $\RP^1$ extends uniquely as a quasiconformal $\theta$-landslide from $\HH^2$ to $\HH^2$. 
\end{enumerate}

\medskip

For surfaces invariant under a surface group -- or, equivalently, past-convex surfaces in a quasifuchsian AdS spacetime -- an existence and uniqueness result is obtained in \cite{immersed_weyl_ads}.

\begin{theorem} \label{tm:imm-weyl-ads}
  Let $S$ be a closed surface of genus at least $2$, let $h$ be smooth metric on $S$ of curvature $K\in [-1/\epsilon,-\epsilon]$, and let $h_0$ be a hyperbolic metric on $S$. There exists a unique quasifuchsian AdS spacetime containing a past-convex surface homeomorphic to $S$, with induced metric $h_+$ and left metric isotopic to $h_0$.  
\end{theorem}

\subsection{Ideal and hyperideal polyhedra}

There are other recent results concerning versions of the Weyl problem in $\AdS^3$, but for polyhedra rather than smooth surfaces. We would like to briefly mention the recent results in \cite{idealpolyhedra} showing that ideal polyhedra in $\AdS^3$ are uniquely determined by their induced metrics, or by their dihedral angles (which play the role of third fundamental form in this setting). Similar results hold for hyperideal polyhedra in $\AdS^3$ \cite{hyperideal}.

\section{Minkowski domains of dependence and half-pipe geometry}
\label{sc:minkowski}

We consider in this section analogs of the questions considered above in hyperbolic and anti-de Sitter space, but in Minkowski space instead. Minkowski space is naturally dual to a 3-dimensional space introduced by Jeff Danciger, see \cite{danciger:transition,danciger:ideal}, and called half-pipe space. Half-pipe geometry can be naturally considered as a ``transitional'' geometry between hyperbolic and anti-de Sitter geometry, and most of the questions considered above in the hyperbolic and anti-de Sitter settings also make sense in half-pipe, and by duality, in Minkowski space. 

Space-like surfaces in half-pipe space can be considered as first-order deformations of a totally geodesic space-like plane in $AdS^3$, or of a totally geodesic plane in $\HH^3$, see Section \ref{ssc:deformation}.

\subsection{Half-pipe space as a dual of Minkowski space}
\label{ssc:dual-HP}

We provide here some very basic definitions, see e.g. \cite{danciger:transition,bonsante-seppi_spacelike} for more details.

Half-pipe space $\HP^3$ can be defined simply as $\HH^2\times \R$, equipped with the degenerate metric $h_0+0dt^2$, where $h_0$ is the standard metric on $\HH^2$. It is naturally the geometric structure occuring in the convex subset bounded by a degenerate quadric in $\RP^3$ (or equivalently a cylinder in $\R^3$) through the Hilbert metric. In this way, $\HP^3$ is equipped with a natural notion of lines and planes, which are just the intersection with the cylinder of lines and planes in $\R^3$. A line or planed is called space-like if it is not parallel to the generatrix of the cylinder, that is, if the metric induced from that of $\HP^3$ is non-degenerate.

There is a natural duality between $\HP^3$ and Minkowski space $\R^{2,1}$. To any space-like plane $p$ in $\R^{2,1}$ one can associate its unit future-oriented normal vector $n$, and the oriented  distance $d$ between $0$ and $p$ along the line directed by $n$ through $0$. The pair $(n,d)$ defines a point in $\HP^3$, which we call the dual of $p$ and denote by $p^*$. Conversely, given a point $q\in \R^{2,1}$, the set of points $p^*\in \HP^3$ dual to the space-like planes $p\ni q$ in $\R^{2,1}$ form a space-like plane in $\HP^3$, which we call the plane dual to $q$ and denote by $q^*$. 

The isometry group of $\HP^3$ is infinite-dimensional (a fact that follows from the degeneraty of the metric). However $\HP^3$ has a restricted group of isometries, those that preserve the totally geodesic planes. This restricted isometry group is finite-dimensional. 

The identity component of the (restricted) isometry group of both $\HP^3$ and $\R^{2,1}$ can be identified to $PSL(2,\R)\ltimes \R^{2,1}$, and the action of the (restricted) isometry group commutes with the duality. 

A surface $S\subset \HP^3$ is called space-like if the induced metric on $S$ is everywhere non-degenerate. If $S$ is smooth, one can then define its second fundamental form, by considering it locally as the graph of a function over its tangent plane $p$ (each point of $S$ being associated to the point of $p$ on the same degenerate line of $\HP^3$), and taking the Hessian of that function. One can then define its Weingarten operator and third fundamental form $\III$ in the usual manner.

Let now $S\subset \R^{2,1}$ be a future-convex space-like surface with positive definite second fundamental form. One can consider the set of points in $\HP^3$ dual to the planes tangent to $S$. This turns out to be a locally strictly convex surface $S^*$. Moreover this duality exchanges the induced metric and third fundamental form, as seen for hyperbolic space in Section \ref{ssc:duality-hyperbolic} and for the anti-de Sitter space in Section \ref{sssc:duality}. (This duality can of course be seen as a limit case of the duality seen in Section \ref{sssc:duality}, see below in Section \ref{ssc:deformation}.)

\subsection{Half-pipe surfaces as first-order deformations of hyperbolic planes}
\label{ssc:deformation}

We have already seen in the introduction how a surface in $\R^3$ can be seen as a limit of surfaces in $\HH^3$ of diameter converging to $0$. In a similar manner, a surface in $\HH^3$ can be seen as a first-order deformation of a totally geodesic plane in $\HH^3$, or of a totally geodesic space-like plane in $AdS^3$. We briefly describe here (following \cite{danciger:ideal,danciger:transition}) the construction for a totally geodesic plane in $\AdS^3$, it can be adapted almost verbatim, with some small simplifications, to a totally geodesic space-like plane in $\HH^3$. 

Let $P_0$ be a totally geodesic plane in $\AdS^3$, and let $v=\nu n$ be a normal vector field defined along $P_0$, corresponding to a first-order deformation of $P$. For each $x\in P_0$ and $t\in [0,1]$, we define $\phi_t(x)=\exp_x(tv)$, where $\exp_x$ is the exponential map at $x$. This defines a one-parameter family $(\phi_t)_{t\in [0,1]}$ of embeddings of $P_0$ in $\AdS^3$.

We then call $\Omega\subset\AdS^3$ the union of geodesics orthogonal to $P_0$ --- this subset is the future cone of the point dual to $P_0$ in $\AdS^3$. For each $x\in \Omega$, we denote by $\pi(x)\in P_0$ its orthogonal projection on $P_0$, and by $t(x)$ the time-oriented distance from $\pi(x)$ to $x$ along the geodesic joining them, which is therefore orthogonal to $P_0$ at $\pi(x)$. A direct computation then shows that the AdS metric on $\Omega$ can be written as
$$ g_1(\dot x,\dot x) = \cos(t(x))^2 h_0(d\pi(\dot x),d\pi(\dot x)) - dt(\dot x)^2~, $$
where $h_0$ is the induced metric on $P_0$.

We then for $s\in (0,1)$ perform the change of variable $\tau(x)=t(x)/s$, which leads to a one-parameter family of rescaled metrics $(g_s)_{s\in (0,1]}$  defined as
$$ g_s(\dot x,\dot x) = \cos(s \tau(x))^2 h_0(d\pi(\dot x),d\pi(\dot x)) - s^2 d\tau(\dot x)^2~. $$
Clearly those metrics converge, as $s\to 0$, to the metric on $\HP^3$.

For each $s\in (0,1]$, we can now associate to the normal deformation field $v$ the surface $S_s=\phi_s(P_0)$. In the coordinates $(\pi(x),\tau(x))$ defined above, $S_s$ is the graph of the function $v:P_0\to \R$. So as $s\to 0$, $S_s$ converges to the graph of this function $v$ over $P_0$, considered as a surface $S_{HP}$ in $HP^3$.

Note that a similar description can be used to describe the dual surface $S^*$ of $S$ in $\R^{2,1}$. Indeed, the dual of $P_0$ is a point $P_0^*$, and as $t\to 0$, $S_t^*$ converges to the past light cone of  $P_0^*$. To obtain the surface dual to $S_{HP}$, one needs only to apply a one-parameter family of homotheties (or scaling) centered at $P_0^*$. 

\subsection{Quasifuchsian HP manifolds}
\label{ssc:quasifuchsian-HP}

Once half-pipe surfaces are considered as a first-order deformations of totally geodesic planes in $\AdS^3$ or $\HH^3$, one can consider equivariant first-order deformations of a totally geodesic plane as ``Cauchy surfaces'' in a half-pipe manifold. We do not elaborate on this point here, and refer to Danciger's work \cite{danciger:transition} for a detailed construction. We will call those half-pipe manifolds ``quasifuchsian'', since a number of their properties are similar or analoguous to those of quasifuchsian hyperbolic manifolds or AdS spacetimes.
\begin{itemize}
\item Their holonomy representation takes value in $PSL(2,\R)\ltimes \R^{2,1}$.
\item They contain a smallest non-empty geodesically convex subset, their convex core. Except in the ``Fuchsian'' case -- for manifolds containing a totally geodesic closed surface -- the convex core has non-empty interior, and its boundary is the disjoint union of two hyperbolic surfaces pleated along a measured bending lamination.
\item They contain a unique minimal Cauchy surface (i.e. a Cauchy surface for which the trace of $\II$ with respect to $I$ vanishes). 
\end{itemize}

There is also a notion of Minkowski dual of a quasifuchsian half-pipe manifold. However this dual is not a single Minkowski spacetime, but rather a pair of globally hyperbolic maximal compact Minkowski spacetimes, one future-complete and one past-complete, which share the same holonomy (which is of course also the holonomy of their dual half-pipe manifold). 

Note that the questions considered above concerning the induced metrics on surfaces do not work well in half-pipe geometry. This is because the induced metric on any complete space-like surface in $\HP^3$ is isometric to the hyperbolic plane, while all Cauchy surfaces in a quasifuchsian half-pipe manifold have the same induced metric. However the dual questions concerning the third fundamental forms on those surfaces do make sense. Equivalently, one can consider questions concerning the induced metrics on locally convex surfaces, but not really those concerning their third fundamental forms.

The natural analog of Question $W^{*\Omega}_{\HH^3}$ in this half-pipe setting, for domains with smooth boundary and a quasicircle at infinity, is the following.

\begin{namedquestion}[$W^{*\Omega}_{\HP^3}$]
  Let $h_-, h_+$ be two complete, conformal metrics of curvature $K<0$ on the disk $\DD^2$, and let $\sigma:\partial\DD^2\to \partial\DD^2$ be a quasi-symmetric homeomorphism. Is there a unique geodesically convex domain in $\HP^3$ with boundary at infinity a quasicircle, such that the third fundamental forms on the past and future boundary components are isometric respectively to $h_-$ and $h_+$, with gluing at infinity given by $\sigma$? 
\end{namedquestion}

This question can also be stated dually in terms of the induced metrics on space-like surfaces in pairs of Minkowski domains of dependence. The statement is somewhat less visual than that of Question $W^{*\Omega}_{\HP^3}$. Given a pair of corresponding domains of dependence $D_-, D_+$ in $\R^{2,1}$, one past-complete and one future-complete, and given two complete space-like surfaces $S_-\subset D_-$ and $S_+\subset D_+$, there is a natural identication between asymptotic directions on $S_-$ and on $S_+$, obtained by identifying points at infinity corresponding to parallel half-lines in $\partial D_-$ and $\partial D_+$. Question $W^{*\Omega}_{\HP^3}$ asks whether given any two complete conformal metrics on $\DD^2$ of negative curvature, and a quasi-symmetric homeomorphism $\sigma:\partial\DD^2\to \partial\DD^2$, there is a unique pair of corresponding domains of dependence $D_-, D_+$ containing complete space-like surfaces $S_-, S_+$ with induced metrics isometric to $g_-$ and $g_+$,respectively, with identification at infinity given by $\sigma$. 

\subsection{Equivariant surfaces}

The special case of Question $W^{*\Omega}_{\HP}$ was recently proved by Graham Smith \cite{smith2020weyl}. His result can be stated as follows.

\begin{theorem}[G. Smith] \label{tm:smith}
  Let $S$ be a closed surface, and let $g_-, g_+$ be two metrics of negative curvature on $S$. There is then a unique quasifuchsian half-pipe manifold containing a geodesically convex subset such that the third fundamental forms on the past and future boundary components are isotopic to $g_-$ and $g_+$, respectively. 
\end{theorem}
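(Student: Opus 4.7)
The plan is to work in the dual Minkowski setting introduced in Section~\ref{sc:minkowski}: by the polar duality between $\HP^3$ and $\R^{2,1}$ described in Section~\ref{ssc:dual-HP}, a quasifuchsian half-pipe manifold corresponds to a \emph{pair} of corresponding globally hyperbolic maximal compact Minkowski spacetimes $(M_-, M_+)$---one past-complete, one future-complete---sharing a common linear holonomy $\rho_0 \in \cT_S$. Under this duality, the induced metrics on the past and future boundary components of a convex subset of the HP manifold correspond to induced metrics on strictly convex Cauchy surfaces $\Sigma_\pm \subset M_\pm$. In this way the theorem becomes equivalent to: given $g_-, g_+$, find a unique pair $(M_-, M_+)$ of Minkowski GHMC spacetimes with the same linear holonomy, together with Cauchy surfaces $\Sigma_\pm$ whose induced metrics are isotopic to $g_\pm$.

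The first step is to parameterize each Minkowski side separately. For a fixed Fuchsian representation $\rho_0 \in \cT_S$ and a metric $g$ of negative curvature on $S$, I would look for a $\rho_0$-equivariant isometric immersion $u_g \colon (\tilde S, \tilde g) \to \R^{2,1}$ whose image is a strictly convex, future-complete space-like surface. Because $g$ has negative curvature, the Gauss equation in the flat ambient forces the shape operator of $u_g$ to be positive definite, so the problem is of elliptic Monge--Amp\`ere type. I would adapt the equivariant existence and uniqueness theory available for $K$-surfaces in Minkowski (Barbot--B\'eguin--Zeghib) and for surfaces of variable negative curvature (via the methods of Bonsante--Seppi and Li--Simon) to conclude that, for each $(\rho_0, g)$, there is a unique such $u_g$ up to translation; the translation component then yields a well-defined cocycle $\tau_+(\rho_0, g_+) \in H^1(S, \mathrm{ad}\,\rho_0)$, and symmetrically $\tau_-(\rho_0, g_-)$ for the past-complete side.

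The theorem then reduces to finding $\rho_0 \in \cT_S$ such that the two cocycles agree:
\[
\Phi(\rho_0) := \tau_+(\rho_0, g_+) - \tau_-(\rho_0, g_-) = 0 \in H^1(S, \mathrm{ad}\,\rho_0),
\]
an equation between real vector bundles of rank $6g-6$ over $\cT_S$, whose total dimension matches that of the base. I would attack this by a combination of degree theory and a variational principle. The key steps are: (a) \emph{properness} of $\Phi$, via a priori $C^2$ estimates on $u_{g_\pm}$ preventing $\rho_0$ from escaping to the Thurston boundary of $\cT_S$; (b) a \emph{degree computation} at the diagonal model $g_- = g_+$, where the Fuchsian HP manifold with totally geodesic Cauchy surface gives the unique zero of $\Phi$, with $\rho_0$ the uniformization of the conformal class of $g_\pm$; and (c) \emph{infinitesimal injectivity} of $\Phi$ at every solution.

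The hard part will be (c). Because the HP structure is degenerate, the two Cauchy surfaces $\Sigma_-$ and $\Sigma_+$ are coupled only through the linear holonomy $\rho_0$, and neither side alone is infinitesimally rigid. My plan is to construct a smooth functional $\mathcal{F} \colon \cT_S \to \R$ whose gradient is $\Phi$ (analogous to Tamburelli's functional for quasifuchsian AdS spacetimes with prescribed boundary metrics, and to Mess-type energies in the Minkowski setting), and to prove its strict convexity by a Bochner--Weitzenb\"ock integration-by-parts argument summing contributions from $\Sigma_-$ and $\Sigma_+$. Strict convexity yields simultaneously the infinitesimal injectivity of $\Phi$ and the uniqueness of its zero, which, combined with (a) and (b), completes the proof of both existence and uniqueness.
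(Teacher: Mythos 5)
Your reduction to the dual Minkowski picture is the right starting point, but the proposal then diverges from Smith's argument and, more importantly, rests on an unestablished (and possibly ill-posed) intermediate claim. The whole strategy hinges on the assertion that for each fixed Fuchsian holonomy $\rho_0\in\cT_S$ and each negatively curved metric $g$ there is a \emph{unique} $\rho_0$-equivariant convex isometric embedding of $(\tilde S,\tilde g)$ into $\R^{2,1}$, so that the translation part gives a well-defined section $\tau_\pm(\rho_0,g_\pm)$ over $\cT_S$. This is itself an equivariant Weyl problem in Minkowski space with \emph{prescribed linear holonomy}, and it is not a consequence of the constant-curvature results you invoke: those produce, for each spacetime, a unique $K$-surface, but they do not say that the $(6g-6)$-dimensional family of spacetimes realizing a given variable-curvature metric is a graph over the linear holonomy. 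Without that, the map $\Phi$ is not even defined and the degree-theoretic scheme cannot start. The later steps are also only gestured at: for $\Phi$ to be the gradient of a functional $\mathcal F$ you need an integrability (symmetry) condition on its linearization, which is exactly the kind of structure one normally has to extract from a Schl\"afli-type or volume formula, and the claimed strict convexity is asserted rather than computed; in the half-pipe setting neither boundary surface is infinitesimally rigid on its own, so convexity of a sum of two contributions is precisely where a hidden cancellation could fail.

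By contrast, the proof in \cite{smith2020weyl}, as recalled in Section~\ref{sc:minkowski}, avoids all of this by a direct reduction to the Trapani--Valli theory of minimizing diffeomorphisms \cite{trapani-valli}. The projection $\pi\colon S_+\to S_-$ along the degenerate lines of the half-pipe manifold factors as $\pi=\pi_-\circ(\pi_+)^{-1}$, where $\pi_\pm$ is the identity map from $(S_\pm,I)$ to $(S_\pm,\III)$; both factors are minimizing maps, and Trapani--Valli prove existence and uniqueness of a minimizing map isotopic to the identity between any two negatively curved metrics on a closed surface. Existence and uniqueness of the half-pipe manifold then follow at once, with no degree theory and no auxiliary functional. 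If you wish to salvage your approach, the honest formulation of the missing lemma is the prescribed-linear-holonomy Weyl problem in $\R^{2,1}$; proving it would be of independent interest, but it is not contained in the literature you cite.
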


Smith in fact states and proves the dual statement: given a closed surface $S$ and a pair $(g_-, g_+)$ of negatively curved metrics on $S$, there exists a unique GHMC Minkowski spacetime into which $(S,g_-)$ and $(S,g_+)$ isometrically embed as Cauchy surfaces in the past and future components respectively. 

An analogous result for polyhedral surfaces was the obtained by Fillastre and Prosanov \cite{fillastre-prosanov}. The metrics which are obtained are then Euclidean metrics with cone singularities of angle larger than $2\pi$, and the authors prove that each pair of such metrics can be uniquely obtained as third fundamental forms (or dual metrics, as they are called in this polyhedral setting). Again the result can be stated in terms of pairs of polyhedral surfaces in pairs of GHMC Minkowski spacetimes (sharing the same holonmy representation) with given induced metrics.

Note also that the corresponding statements for pleated surfaces in half-pipe manifolds, where one prescribes the measured pleating lamination, is proved in \cite[Theorem B.2]{earthquakes}, see also \cite{bonahon-almost}. 

\subsection{Pairs of unbounded surfaces and minimizing diffeomorphisms}

The proof of Theorem \ref{tm:smith} in \cite{smith2020weyl} (translated into half-pipe geometry) is based on the fact that if $S_-$ and $S_+$ are future-convex and past-convex surfaces in a quasifuchsian HP manifold $M$, with third fundamental forms $g_-$ and $g_+$, respectively, then the natural projection $\pi:S_+ \to S_-$ obtained by following the degenerate lines in $M$ can be factored as
$$ \pi = \pi_-\circ (\pi_+)^{-1}~, $$
where $\pi_\pm$ are the identity maps from $(S_\pm,I)$ to $(S_\pm, \III)$ and $(S_-,I)$ is identified isometrically to $(S_+,I)$. Moreover both $\pi_-$ and $\pi_+$ are {\em minimizing} in the sense of Trapani and Valli \cite{trapani-valli}, who proved the existence and uniqueness of such a minimizing map isotopic to the identity between two negatively curved metrics on a surface. 

Question $W^{*\Omega}_{\HP^3}$ thus leads quite naturally to the question of the existence and uniqueness of a minimizing map with given (quasi-symmetric) boundary behavior between two complete conformal metrics of negative curvature on the disk $\DD^2$ -- a result which is known \cite{maximal} only for metrics of constant curvature $-1$. 
 %% Minkowski analogs

\section{Further questions: the Weyl problem and $K$-surfaces in higher Teichm\"uller theory}

There has been in the last years considerable interest in extending elements of Teichm\"uller theory to specific representations in Lie groups more general than $SL(2)$. The first development in this area was by Hitchin \cite{hitchin}, with key developments by Labourie \cite{labourie-anosov,labourie-cross,labourie2009} as well as Fock and Goncharov \cite{fock-goncharov-1} (we can obviously not give proper references to all important contributions here). Several types of representations fit into this general setting, in particular Anosov representations and maximal representations (depending on the Lie group being considered). 

AdS quasifuchsian spacetimes can be considered as a ``baby case'' of this higher Teichm\"uller theory, with the Lie group being $SO(2,2)$ or $PSL(2,\R)\times PSL(2,\R)$.

As already mentioned above, it would be particularly interesting in this area to attach to each ``admissible'' representation of a surface group a ``canonical'' surface, satisfying a specific property, invariant under this surface group in the symmetric space of the Lie group, or in another associated space. A first candidate for the defining property of this surface is to consider minimal surfaces, but the existence and uniqueness of minimal surfaces in the symmetric space of the Lie group is only known for Hitchin representations of rank $2$ \cite{labourie:cyclic}, and it fails in higher rank \cite{sagman-smillie:unstable}.

\bibliographystyle{alpha}
\bibliography{/home/jean-marc/Dropbox/papiers/outils/biblio}

\newcommand{\etalchar}[1]{$^{#1}$}
\def\cprime{$'$}
\begin{thebibliography}{BDMS21b}

\bibitem[AB60]{ahlfors-bers:riemann}
Lars Ahlfors and Lipman Bers.
\newblock Riemann's mapping theorem for variable metrics.
\newblock {\em Ann. of Math. (2)}, 72:385--404, 1960.

\bibitem[ABB{\etalchar{+}}07]{mess-notes}
Lars Andersson, Thierry Barbot, Riccardo Benedetti, Francesco Bonsante,
  William~M. Goldman, Fran{\c{c}}ois Labourie, Kevin~P. Scannell, and Jean-Marc
  Schlenker.
\newblock Notes on: ``{L}orentz spacetimes of constant curvature'' [{G}eom.
  {D}edicata {\bf 126} (2007), 3--45; mr2328921] by {G}. {M}ess.
\newblock {\em Geom. Dedicata}, 126:47--70, 2007.

\bibitem[Ale05]{alex}
Alexander~D. Alexandrov.
\newblock {\em Convex polyhedra}.
\newblock Springer Monographs in Mathematics. Springer-Verlag, Berlin, 2005.
\newblock Translated from the 1950 Russian edition by N. S. Dairbekov, S. S.
  Kutateladze and A. B. Sossinsky, With comments and bibliography by V. A.
  Zalgaller and appendices by L. A. Shor and Yu. A. Volkov.

\bibitem[And70]{Andreev}
E.M. Andreev.
\newblock Convex polyhedra in {Lobacevskii} space.
\newblock {\em Mat. Sb.(N.S.)}, 81 (123):445--478, 1970.

\bibitem[And71]{Andreev-ideal}
E.M. Andreev.
\newblock On convex polyhedra of finite volume in {Lobacevskii} space.
\newblock {\em Math. USSR Sbornik}, 12 (3):225--259, 1971.

\bibitem[BB02]{bao-bonahon}
Xiliang Bao and Francis Bonahon.
\newblock Hyperideal polyhedra in hyperbolic 3-space.
\newblock {\em Bull. Soc. Math. France}, 130(3):457--491, 2002.

\bibitem[BBD{\etalchar{+}}12]{adsquestions}
T.~Barbot, F.~Bonsante, J.~Danciger, W.M. Goldman, F.~Gu{\'e}ritaud, F.~Kassel,
  K.~Krasnov, J.M. Schlenker, and A.~Zeghib.
\newblock Some open questions on anti-de sitter geometry.
\newblock {\em Arxiv preprint arXiv:1205.6103}, 2012.

\bibitem[BDMS21a]{convexhull}
Francesco Bonsante, Jeffrey Danciger, Sara Maloni, and Jean-Marc Schlenker.
\newblock The induced metric on the boundary of the convex hull of a
  quasicircle in hyperbolic and anti--de {S}itter geometry.
\newblock {\em Geom. Topol.}, 25(6):2827--2911, 2021.
\newblock With an appendix by Boubacar Diallo.

\bibitem[BDMS21b]{width}
Francesco Bonsante, Jeffrey Danciger, Sara Maloni, and Jean-Marc Schlenker.
\newblock Quasicircles and width of {J}ordan curves in {$\Bbb C\Bbb P^1$}.
\newblock {\em Bull. Lond. Math. Soc.}, 53(2):507--523, 2021.

\bibitem[Bel14]{belraouti1}
Mehdi Belraouti.
\newblock Sur la g\'{e}om\'{e}trie de la singularit\'{e} initiale des
  espaces-temps plats globalement hyperboliques.
\newblock {\em Ann. Inst. Fourier (Grenoble)}, 64(2):457--466, 2014.

\bibitem[Bel17]{belraouti2}
Mehdi Belraouti.
\newblock Asymptotic behavior of {C}auchy hypersurfaces in constant curvature
  space-times.
\newblock {\em Geom. Dedicata}, 190:103--133, 2017.

\bibitem[Bis94]{bishop:some}
Christopher~J Bishop.
\newblock Some homeomorphisms of the sphere conformal off a curve.
\newblock {\em Annales Fennici Mathematici}, 19(2):323--338, 1994.

\bibitem[BMS13]{cyclic}
Francesco Bonsante, Gabriele Mondello, and Jean-Marc Schlenker.
\newblock A cyclic extension of the earthquake flow {I}.
\newblock {\em Geom. Topol.}, 17(1):157--234, 2013.

\bibitem[BMS15]{cyclic2}
Francesco Bonsante, Gabriele Mondello, and Jean-Marc Schlenker.
\newblock A cyclic extension of the earthquake flow {II}.
\newblock {\em Ann. Sci. \'Ec. Norm. Sup\'er. (4)}, 48(4):811--859, 2015.

\bibitem[BO04]{bonahon-otal}
Francis Bonahon and Jean-Pierre Otal.
\newblock Laminations mesur\'ees de plissage des vari\'et\'es hyperboliques de
  dimension 3.
\newblock {\em Ann. Math.}, 160:1013--1055, 2004.

\bibitem[Bon05]{bonahon-almost}
Francis Bonahon.
\newblock Kleinian groups which are almost {F}uchsian.
\newblock {\em J. Reine Angew. Math.}, 587:1--15, 2005.

\bibitem[BS10]{maximal}
Francesco Bonsante and Jean-Marc Schlenker.
\newblock Maximal surfaces and the universal {T}eichm\"uller space.
\newblock {\em Invent. Math.}, 182(2):279--333, 2010.

\bibitem[BS12]{earthquakes}
Francesco Bonsante and Jean-Marc Schlenker.
\newblock Fixed points of compositions of earthquakes.
\newblock {\em Duke Math. J.}, 161(6):1011--1054, 2012.

\bibitem[BS17]{bonsante-seppi_spacelike}
Francesco Bonsante and Andrea Seppi.
\newblock Spacelike convex surfaces with prescribed curvature in
  {$(2+1)$}-{M}inkowski space.
\newblock {\em Adv. Math.}, 304:434--493, 2017.

\bibitem[BS18]{bonsante-seppi:area}
Francesco Bonsante and Andrea Seppi.
\newblock Area-preserving diffeomorphisms of the hyperbolic plane and
  {$K$}-surfaces in anti-de {S}itter space.
\newblock {\em J. Topol.}, 11(2):420--468, 2018.

\bibitem[BS20a]{bonsante-seppi:anti}
Francesco Bonsante and Andrea Seppi.
\newblock Anti-de sitter geometry and teichm{\"u}ller theory.
\newblock In A.~Papadopoulos K.~Ohshika, editor, {\em In the Tradition of
  Thurston}, pages 545--643. Springer, 2020.

\bibitem[BS20b]{bonsante2020antide}
Francesco Bonsante and Andrea Seppi.
\newblock Anti-de sitter geometry and teichm\"uller theory, 2020.

\bibitem[Cau13]{cauchy}
Augustin~Louis Cauchy.
\newblock Sur les polygones et poly\`edres, second m\'emoire.
\newblock {\em Journal de l'Ecole Polytechnique}, 19:87--98, 1813.

\bibitem[CS22a]{chen2022geometric}
Qiyu Chen and Jean-Marc Schlenker.
\newblock The geometric data on the boundary of convex subsets of hyperbolic
  manifolds, 2022.

\bibitem[CS22b]{hyperideal}
Qiyu Chen and Jean-Marc Schlenker.
\newblock Hyperideal polyhedra in the 3-dimensional anti-de {S}itter space.
\newblock {\em Adv. Math.}, 404(part B):Paper No. 108441, 61, 2022.

\bibitem[CS24]{immersed_weyl_ads}
Qiyu Chen and Jean-Marc Schlenker.
\newblock Convex surfaces with prescribed induced metrics in anti-de sitter
  spacetimes, 2024.

\bibitem[Dan13]{danciger:transition}
Jeffrey Danciger.
\newblock A geometric transition from hyperbolic to anti-de {S}itter geometry.
\newblock {\em Geom. Topol.}, 17(5):3077--3134, 2013.

\bibitem[Dan14]{danciger:ideal}
Jeffrey Danciger.
\newblock Ideal triangulations and geometric transitions.
\newblock {\em J. Topol.}, 7(4):1118--1154, 2014.

\bibitem[Dia13]{diallo2013}
Boubacar Diallo.
\newblock Prescribing metrics on the boundary of convex cores of globally
  hyperbolic maximal compact ads 3-manifolds.
\newblock {\em arXiv preprint arXiv:1303.7406}, 2013.

\bibitem[DMS20]{idealpolyhedra}
Jeffrey Danciger, Sara Maloni, and Jean-Marc Schlenker.
\newblock Polyhedra inscribed in a quadric.
\newblock {\em Invent. Math.}, 221(1):237--300, 2020.

\bibitem[DS24]{uniqueness}
Bruno Dular and Jean-Marc Schlenker.
\newblock Convex co-compact hyperbolic manifolds are determined by their
  pleating lamination, 2024.

\bibitem[Dum08]{dumas-survey}
Emily Dumas.
\newblock Complex projective structures.
\newblock In {\em Handbook of {T}eichm\"uller theory. {V}ol. {II}}, volume~13
  of {\em IRMA Lect. Math. Theor. Phys.}, pages 455--508. Eur. Math. Soc.,
  Z\"urich, 2008.

\bibitem[DW08]{dumas-wolf}
Emily Dumas and Michael Wolf.
\newblock Projective structures, grafting and measured laminations.
\newblock {\em Geom. Topol.}, 12(1):351--386, 2008.

\bibitem[EM86]{epstein-marden}
D.~B.~A. Epstein and A.~Marden.
\newblock Convex hulls in hyperbolic spaces, a theorem of {Sullivan}, and
  measured pleated surfaces.
\newblock In D.~B.~A. Epstein, editor, {\em Analytical and geometric aspects of
  hyperbolic space}, volume 111 of {\em L.M.S. Lecture Note Series}. Cambridge
  University Press, 1986.

\bibitem[FG06]{fock-goncharov-1}
Vladimir Fock and Alexander Goncharov.
\newblock Moduli spaces of local systems and higher {T}eichm\"uller theory.
\newblock {\em Publ. Math. Inst. Hautes \'Etudes Sci.}, (103):1--211, 2006.

\bibitem[FI09]{fillastre-izmestiev}
Fran{\c{c}}ois Fillastre and Ivan Izmestiev.
\newblock Hyperbolic cusps with convex polyhedral boundary.
\newblock {\em Geom. Topol.}, 13(1):457--492, 2009.

\bibitem[Fil07]{fillastre2}
Fran{\c{c}}ois Fillastre.
\newblock Polyhedral realisation of hyperbolic metrics with conical
  singularities on compact surfaces.
\newblock {\em Ann. Inst. Fourier (Grenoble)}, 57(1):163--195, 2007.

\bibitem[Fil08]{fillastre4}
Fran{\c{c}}ois Fillastre.
\newblock Polyhedral hyperbolic metrics on surfaces.
\newblock {\em Geom. Dedicata}, 134:177--196, 2008.

\bibitem[Fil11]{fillastre3}
Fran{\c{c}}ois Fillastre.
\newblock Fuchsian polyhedra in {L}orentzian space-forms.
\newblock {\em Math. Ann.}, 350(2):417--453, 2011.

\bibitem[FLP91]{FLP}
A.~Fathi, F.~Laudenbach, and V.~Poenaru.
\newblock {\em Travaux de {T}hurston sur les surfaces}.
\newblock Soci\'et\'e Math\'ematique de France, Paris, 1991.
\newblock S\'eminaire Orsay, Reprint of {\it Travaux de Thurston sur les
  surfaces}, Soc.\ Math.\ France, Paris, 1979 [MR 82m:57003], Ast\'erisque No.
  66-67 (1991).

\bibitem[FP23]{fillastre-prosanov}
Fran\c{c}ois Fillastre and Roman Prosanov.
\newblock Polyhedral surfaces in flat (2+1)-spacetimes and balanced
  cellulations on hyperbolic surfaces, 2023.

\bibitem[FS19]{fillastre-seppi:spherical}
Fran\c{c}ois Fillastre and Andrea Seppi.
\newblock Spherical, hyperbolic, and other projective geometries: convexity,
  duality, transitions.
\newblock In {\em Eighteen essays in non-{E}uclidean geometry}, volume~29 of
  {\em IRMA Lect. Math. Theor. Phys.}, pages 321--409. Eur. Math. Soc.,
  Z\"{u}rich, 2019.

\bibitem[Gol88]{goldman:topological}
William~M. Goldman.
\newblock Topological components of spaces of representations.
\newblock {\em Invent. Math.}, 93(3):557--607, 1988.

\bibitem[Hit87]{hitchin}
N.~J. Hitchin.
\newblock The self-duality equations on a {R}iemann surface.
\newblock {\em Proc. London Math. Soc. (3)}, 55(1):59--126, 1987.

\bibitem[HR93]{HR}
Craig~D. Hodgson and Igor Rivin.
\newblock A characterization of compact convex polyhedra in hyperbolic 3-space.
\newblock {\em Invent. Math.}, 111:77--111, 1993.

\bibitem[KMT03]{KMT}
Sadayoshi Kojima, Shigeru Mizushima, and Ser~Peow Tan.
\newblock Circle packings on surfaces with projective structures.
\newblock {\em J. Differential Geom.}, 63(3):349--397, 2003.

\bibitem[KMT06a]{KMT3}
Sadayoshi Kojima, Shigeru Mizushima, and Ser~Peow Tan.
\newblock Circle packings on surfaces with projective structures: a survey.
\newblock In {\em Spaces of {K}leinian groups}, volume 329 of {\em London Math.
  Soc. Lecture Note Ser.}, pages 337--353. Cambridge Univ. Press, Cambridge,
  2006.

\bibitem[KMT06b]{KMT2}
Sadayoshi Kojima, Shigeru Mizushima, and Ser~Peow Tan.
\newblock Circle packings on surfaces with projective structures and
  uniformization.
\newblock {\em Pacific J. Math.}, 225(2):287--300, 2006.

\bibitem[KS07]{minsurf}
Kirill Krasnov and Jean-Marc Schlenker.
\newblock Minimal surfaces and particles in 3-manifolds.
\newblock {\em Geom. Dedicata}, 126:187--254, 2007.

\bibitem[Lab89]{L1}
Fran\c{c}ois Labourie.
\newblock Immersions isom\'etriques elliptiques et courbes
  pseudo-holo\-morphes.
\newblock {\em J. Differential Geom.}, 30:395--424, 1989.

\bibitem[Lab92a]{L4}
Fran\c{c}ois Labourie.
\newblock M\'etriques prescrites sur le bord des vari\'et\'es hyperboliques de
  dimension 3.
\newblock {\em J. Differential Geom.}, 35:609--626, 1992.

\bibitem[Lab92b]{L5}
Fran\c{c}ois Labourie.
\newblock Surfaces convexes dans l'espace hyperbolique et {CP1}-structures.
\newblock {\em J. London Math. Soc., II. Ser.}, 45:549--565, 1992.

\bibitem[Lab00]{labourie:morse}
Fran{\c{c}}ois Labourie.
\newblock Un lemme de {M}orse pour les surfaces convexes.
\newblock {\em Invent. Math.}, 141(2):239--297, 2000.

\bibitem[Lab05]{labourie-random}
Fran{\c{c}}ois Labourie.
\newblock Random {$k$}-surfaces.
\newblock {\em Ann. of Math. (2)}, 161(1):105--140, 2005.

\bibitem[Lab06]{labourie-anosov}
Fran{\c{c}}ois Labourie.
\newblock Anosov flows, surface groups and curves in projective space.
\newblock {\em Invent. Math.}, 165(1):51--114, 2006.

\bibitem[Lab07]{labourie-cross}
Fran{\c{c}}ois Labourie.
\newblock Cross ratios, surface groups, {${\rm PSL}(n,{\bf R})$} and
  diffeomorphisms of the circle.
\newblock {\em Publ. Math. Inst. Hautes \'Etudes Sci.}, (106):139--213, 2007.

\bibitem[Lab17]{labourie:cyclic}
Fran\c{c}ois Labourie.
\newblock Cyclic surfaces and {H}itchin components in rank 2.
\newblock {\em Ann. of Math. (2)}, 185(1):1--58, 2017.

\bibitem[Lam21]{lam2019quadratic}
Wai~Yeung Lam.
\newblock Quadratic differentials and circle patterns on complex projective
  tori.
\newblock {\em Geom. Topol.}, 25(2):961--997, 2021.

\bibitem[Lam24]{lam2024space}
Wai~Yeung Lam.
\newblock Space of circle patterns on tori and its symplectic form, 2024.

\bibitem[LegII]{legendre}
Adrien-Marie Legendre.
\newblock {\em El\'ements de g\'eom\'etrie}.
\newblock Paris, 1793 (an II).
\newblock Premi\`ere \'edition, note XII, pp.321-334.

\bibitem[Lei02]{leibon1}
Gregory Leibon.
\newblock Characterizing the {D}elaunay decompositions of compact hyperbolic
  surfaces.
\newblock {\em Geom. Topol.}, 6:361--391 (electronic), 2002.

\bibitem[Lew35]{lewy1935priori}
Hans Lewy.
\newblock A priori limitations for solutions of monge-ampere equations.
\newblock {\em Transactions of the American Mathematical Society},
  37(3):417--434, 1935.

\bibitem[LM09]{labourie2009}
Fran{\c{c}}ois Labourie and Gregory McShane.
\newblock Cross ratios and identities for higher {T}eichm\"uller-{T}hurston
  theory.
\newblock {\em Duke Math. J.}, 149(2):279--345, 2009.

\bibitem[LS00]{iie}
Fran{\c{c}}ois Labourie and Jean-Marc Schlenker.
\newblock Surfaces convexes fuchsiennes dans les espaces lorentziens \`a
  courbure constante.
\newblock {\em Math. Ann.}, 316(3):465--483, 2000.

\bibitem[McM98]{mcmullen:complex}
Curtis~T. McMullen.
\newblock Complex earthquakes and {T}eichm\"uller theory.
\newblock {\em J. Amer. Math. Soc.}, 11(2):283--320, 1998.

\bibitem[Mes07]{mess}
Geoffrey Mess.
\newblock Lorentz spacetimes of constant curvature.
\newblock {\em Geom. Dedicata}, 126:3--45, 2007.

\bibitem[Mes23]{mesbah:induced}
Abderrahim Mesbah.
\newblock The induced metric and bending lamination on the boundary of convex
  hyperbolic 3-manifolds, 2023.

\bibitem[Mes24]{mesbah2024}
Abderrahim Mesbah.
\newblock The prescribed metric on the boundary of convex subsets of anti-de
  sitter space with a quasi-circle as ideal boundary, 2024.

\bibitem[MS21]{laminations}
Louis Merlin and Jean-Marc Schlenker.
\newblock Bending laminations on convex hulls of anti--de {S}itter
  quasi-circles.
\newblock {\em Proc. Lond. Math. Soc. (3)}, 123(4):410--432, 2021.

\bibitem[Pog73]{Po}
Aleksei~V. Pogorelov.
\newblock {\em Extrinsic Geometry of Convex Surfaces}.
\newblock American Mathematical Society, 1973.
\newblock Translations of Mathematical Monographs. Vol. 35.

\bibitem[Pro19]{prosanov:ideal}
Roman Prosanov.
\newblock Ideal polyhedral surfaces in {Fuchsian} manifolds, 2019.

\bibitem[Pro22a]{prosanov:dual}
Roman Prosanov.
\newblock Dual metrics on the boundary of strictly polyhedral hyperbolic
  3-manifolds, 2022.
\newblock arxiv:2203.16971.

\bibitem[Pro22b]{prosanov:polyhedral}
Roman Prosanov.
\newblock Hyperbolic 3-manifolds with boundary of polyhedral type, 2022.
\newblock arxiv:2210.17271.

\bibitem[Pro23]{prosanov:fuchsian}
Roman Prosanov.
\newblock Rigidity of compact {F}uchsian manifolds with convex boundary.
\newblock {\em Int. Math. Res. Not. IMRN}, (3):1959--2094, 2023.

\bibitem[Riv92]{rivin-comp}
Igor Rivin.
\newblock Intrinsic geometry of convex ideal polyhedra in hyperbolic 3-space.
\newblock In M.~Gyllenberg and L.~E. Persson, editors, {\em Analysis, Algebra,
  and Computers in Mathematical Research}, pages 275--292. Marcel Dekker, 1992.
\newblock (Proc. of the 21st Nordic Congress of Mathematicians).

\bibitem[Riv96]{rivin-annals}
Igor Rivin.
\newblock A characterization of ideal polyhedra in hyperbolic 3-space.
\newblock {\em Annals of Math.}, 143:51--70, 1996.

\bibitem[Rou04]{rousset1}
Mathias Rousset.
\newblock Sur la rigidit\'e de poly\`edres hyperboliques en dimension 3 : cas
  de volume fini, cas hyperid\'eal, cas fuchsien.
\newblock {\em Bull. Soc. Math. France}, 132:233--261, 2004.

\bibitem[Sch96]{these}
Jean-Marc Schlenker.
\newblock Surfaces convexes dans des espaces lorentziens \`a courbure
  constante.
\newblock {\em Comm. Anal. Geom.}, 4(1-2):285--331, 1996.

\bibitem[Sch98]{shu}
Jean-Marc Schlenker.
\newblock M\'etriques sur les poly\`edres hyperboliques convexes.
\newblock {\em J. Differential Geom.}, 48(2):323--405, 1998.

\bibitem[Sch01]{ideal}
Jean-Marc Schlenker.
\newblock Hyperbolic manifolds with polyhedral boundary.
\newblock math.GT/0111136, 2001.

\bibitem[Sch02]{hphm}
Jean-Marc Schlenker.
\newblock Hyperideal polyhedra in hyperbolic manifolds.
\newblock Preprint math.GT/0212355., 2002.

\bibitem[Sch06]{hmcb}
Jean-Marc Schlenker.
\newblock Hyperbolic manifolds with convex boundary.
\newblock {\em Invent. Math.}, 163(1):109--169, 2006.

\bibitem[Sch21]{weylgen}
Jean-Marc Schlenker.
\newblock The {Weyl} problem for unbounded convex domains in $\mathbb{H}^{3}$.
\newblock {\em arXiv preprint arXiv:2106.02101}, 2021.

\bibitem[Smi06]{smith:pointed}
Graham Smith.
\newblock Pointed {$k$}-surfaces.
\newblock {\em Bull. Soc. Math. France}, 134(4):509--557, 2006.

\bibitem[Smi15]{smith:hyperbolic}
Graham Smith.
\newblock Hyperbolic {P}lateau problems.
\newblock {\em Geom. Dedicata}, 176:31--44, 2015.

\bibitem[Smi20]{smith2020weyl}
Graham Smith.
\newblock On the weyl problem in minkowski space.
\newblock {\em arXiv preprint arXiv:2005.01137}, 2020.

\bibitem[SS22]{sagman-smillie:unstable}
Nathaniel Sagman and Peter Smillie.
\newblock Unstable minimal surfaces in symmetric spaces of non-compact type,
  2022.

\bibitem[SW02]{scannell-wolf}
Kevin~P. Scannell and Michael Wolf.
\newblock The grafting map of {T}eichm\"uller space.
\newblock {\em J. Amer. Math. Soc.}, 15(4):893--927 (electronic), 2002.

\bibitem[SY18]{delaunay}
J.-M. {Schlenker} and A.~{Yarmola}.
\newblock {Properness for circle packings and Delaunay circle patterns on
  complex projective structures}.
\newblock {\em ArXiv e-prints}, June 2018.

\bibitem[Tam18]{tamburelli2016}
Andrea Tamburelli.
\newblock Prescribing metrics on the boundary of anti--de {S}itter 3-manifolds.
\newblock {\em Int. Math. Res. Not. IMRN}, (5):1281--1313, 2018.

\bibitem[Thu86]{thurston-earthquakes}
William~P. Thurston.
\newblock Earthquakes in two-dimensional hyperbolic geometry.
\newblock In {\em Low-dimensional topology and Kleinian groups
  (Coventry/Durham, 1984)}, volume 112 of {\em London Math. Soc. Lecture Note
  Ser.}, pages 91--112. Cambridge Univ. Press, Cambridge, 1986.

\bibitem[Thu06]{thurston:earthquakes}
W.~P. Thurston.
\newblock Earthquakes in 2-dimensional hyperbolic geometry.
\newblock In {\em Fundamentals of hyperbolic geometry: selected expositions},
  volume 328 of {\em London Math. Soc. Lecture Note Ser.}, pages 267--289.
  Cambridge Univ. Press, Cambridge, 2006.

\bibitem[TV95]{trapani-valli}
Stefano Trapani and Giorgio Valli.
\newblock One-harmonic maps on {R}iemann surfaces.
\newblock {\em Comm. Anal. Geom.}, 3(3-4):645--681, 1995.

\bibitem[Wol89]{wolf:teichmuller}
Michael Wolf.
\newblock The {T}eichm\"uller theory of harmonic maps.
\newblock {\em J. Differential Geom.}, 29(2):449--479, 1989.

\end{thebibliography}
\end{document}